\setlist[description]{font=\bfseries}
\definecolor{Myblue}{rgb}{0.2,0.2,0.7}
\numberwithin{equation}{section}
\newcolumntype{P}[1]{>{\raggedright\arraybackslash}p{#1}}
\newenvironment{Rem}{\textbf{Remark :}}{}
\declaretheorem[name=Theorem, numberwithin=section,preheadhook=\vspace{1em}]{The}
\declaretheorem[name=Lemma, sibling=The,preheadhook=\vspace{1em}]{Lem}
\declaretheorem[name=Proposition, sibling=The,preheadhook=\vspace{1em}]{Prop}
\declaretheorem[name=Definition, sibling=The,preheadhook=\vspace{1em}]{Def}
\declaretheorem[name=Example, sibling=The,preheadhook=\vspace{1em}]{Ex}
\declaretheorem[name=Corollary, sibling=The,preheadhook=\vspace{1em}]{Cor}
\declaretheorem[name=Theorem, numberwithin=section,preheadhook=\vspace{1em}]{mainthm}
\declaretheorem[name=Corollary,sibling=mainthm,preheadhook=\vspace{1em}]{maincor}
\declaretheorem[name=Proposition, unnumbered, preheadhook=\vspace{1em}]{Propnn}
\declaretheorem[name=Application,unnumbered,preheadhook=\vspace{1em}]{applnn}
\newtheoremstyle{withtitle}
  {3pt}
  {3pt}
  {\itshape}
  {}
  {\bfseries}
  {.}
  {.5em}
  {\thmname{#1}\thmnumber{ #2}\thmnote{ (#3)}}
  \theoremstyle{withtitle}
\renewcommand{\phi}{\varphi}
\renewcommand{\epsilon}{\varepsilon}
\newcommand{\zz}{\mathbb Z}
\newcommand{\kk}{\mathbb K}
\newcommand{\nn}{\mathbb N}
\newcommand{\rr}{\mathbb R}
\newcommand{\cc}{\mathbb C}
\newcommand{\ff}{\mathbb F}
\newcommand{\B}{\mathcal B}
\newcommand{\C}{\mathcal C}
\newcommand{\E}{\mathcal E}
\newcommand{\G}{\mathcal G}
\renewcommand{\H}{\mathcal H}
\newcommand{\K}{\mathcal K}
\newcommand{\M}{M}
\newcommand{\U}{\mathcal U}
\newcommand{\Z}{\mathcal Z}
\newcommand{\N}{\mathcal N}
\newcommand{\T}{\mathcal T}
\newcommand{\V}{\mathcal V}
\renewcommand{\O}{\mathcal O}
\newcommand{\Sym}{\mathfrak S}
\newcommand{\Lp}{L}
\newcommand{\pt}{\otimes}
\newcommand{\ptvn}{\overline{\otimes}}
\DeclareMathOperator{\id}{id}
\DeclareMathOperator{\Sp}{Sp}
\DeclareMathOperator{\1}{1}
\DeclareMathOperator{\Aut}{Aut}
\DeclareMathOperator{\I}{I}
\DeclareMathOperator{\II}{II}
\DeclareMathOperator{\III}{III}
\DeclareMathOperator{\vect}{span}
\newcommand{\Linf}{\Lp^{\infty}}
\newcommand{\Ldeux}{\Lp^{2}}
\newcommand{\ldeux}{\ell^{2}}
\newcommand{\Lun}{\Lp^{1}}
\newcommand{\abs}[1]{\left \lvert #1 \right \rvert}
\newcommand{\norm}[1]{\left \Vert #1 \right \Vert}
\newcommand{\ps}[2]{\left \langle #1,#2 \right \rangle}
\newcommand{\tend}[3][]{\displaystyle\mathop{\overset{#1}{\longrightarrow}}_{#2\rightarrow#3}} 
\newcommand{\acts}{\curvearrowright}
\newcommand{\ie}{i.\,e.\xspace}
\newcommand{\Cstar}{\operatorname{C}^*}
\renewcommand{\L}{\mathcal L}
\newcommand{\wot}{\operatorname{wot}}
\newcommand{\sot}{\operatorname{sot}}
\newcommand{\op}{\operatorname{op}}
\newcommand{\Wstar}{\operatorname{W}^*}
\let\Sp\relax
\DeclareMathOperator{\Sp}{Sp}
\DeclareMathOperator{\Sinv}{S}
\DeclareMathOperator{\Tinv}{T}
\newcommand{\cl}{\leq}
\newcommand{\co}{<_{co}}
\newcommand{\ind}[2]{\left[#1:#2\right]}
\newcommand{\nmlz}[2]{\N_{#1}(#2)}
\newcommand{\Ccc}[2]{\C_{#1}(#2)}
\newcommand{\ccc}[2]{c_{#1}(#2)}
\newcommand{\fc}[2]{%
  \widehat{#1}%
  \if\relax\detokenize{#2}\relax
  \else
    (#2)
  \fi
}
\newcommand{\etoile}[1]{\hyperref[eq:etoileH]{(\hspace*{1mm}\star_{#1})}}
\newcommand{\lcocosets}[1]{\operatorname{Cosets_{co}}(#1)}
\newcommand{\cosub}[1]{\operatorname{Sub_{co}}(#1)}
\DeclareMathOperator{\Homeo}{Homeo}
\DeclareMathOperator{\Fix}{Fix}
\DeclareMathOperator{\Stab}{Stab}
\DeclareMathOperator{\Isom}{Isom}
\DeclareMathOperator{\rist}{rist}
\newcommand{\fp}[2]{#1^{#2}}
\newcommand{\Tdk}{\T_{d,k}}
\newcommand{\Tdkn}{\T_{d,k}^{\leq n}}
\newcommand{\Tdd}{\T_{d,d}}
\newcommand{\bTdk}{\partial\T_{d,k}}
\newcommand{\Ndk}{\N_{d,k}}
\newcommand{\Odk}{\O_{d,k}}
\newcommand{\Kdk}{K_{d,k}}
\newcommand{\Odkn}[1]{\O_{d,k}^{(#1)}}
\newcommand{\Kdkn}[1]{K_{d,k}^{(#1)}}
\DeclareMathOperator{\HNN}{HNN}
\renewcommand{\bm}[1]{U(#1)}
\newcommand{\bmp}[1]{\widetilde{U}(#1)}
\newcommand{\properideal}{%
  \mathrel{\ooalign{$\lneq$\cr\raise.22ex\hbox{$\lhd$}\cr}}}
\begin{document}

\title{The regular representation of Neretin groups is factorial}
    \author{Basile Morando\thanks{UMPA, ENS Lyon, France}}
    \date{\today}
    \maketitle
\begin{abstract}
 We show that the left regular representation of Neretin groups is factorial, providing the first example of a non-discrete simple group with this property. This is based on a new criterion of factoriality for totally disconnected groups. For groups $G$ satisfying the criterion, we determine the type of the factor $\L(G)$ and derive factoriality results for crossed products associated to $G$-actions on von Neumann algebras.
\end{abstract}

\section*{Introduction}
\addcontentsline{toc}{section}{Introduction}

Let $(\pi, \H)$ be a unitary representation of a locally compact group $G$. It is \emph{irreducible} if it cannot be decomposed as a direct sum of two subrepresentations, and \emph{factorial} if it cannot be decomposed as a direct sum of two \emph{disjoint} subrepresentations. These properties are encoded in the commutant of the representation $\pi(G)'=\{T\in \B(\H)\mid \forall g\in G, T\pi(g)=\pi(g)T\}$. The representation $\pi$ is irreducible if and only if $\pi(G)'$ is trivial, and $\pi$ is factorial if and only if $\pi(G)'$ has a trivial center.

Any multiple of an irreducible representation is factorial, but the converse does not hold in general. A second-countable locally compact group $G$ is said to be of \emph{type }$\I$ if every factorial unitary representation of $G$ is a multiple of an irreducible representation. The class
of type $\I$ groups includes, among others, compact groups, abelian locally compact groups, connected semisimple Lie groups, connected nilpotent Lie groups and the group of automorphisms of a regular tree (see \cite[Theorem 6.E.19]{BekkadekaHarpe2020UnitaryRepresentations}). 

The terms \emph{factorial representation} and \emph{type $\I$ group} are adopted from the theory of von Neumann algebras introduced by Murray and von Neumann in their seminal papers \cite{MurrayvonNeumann1936RingsOperators}. Von Neumann algebras can be defined as algebras of bounded operators on a Hilbert space that are equal to their double commutant. Consequently, $\pi(G)'$ and $\pi(G)''$ are natural examples of von Neumann algebras. A von Neumann algebra is a \emph{factor} if its center is trivial, and such factors have been classified by Murray and von Neumann into types $\I$, $\II$ and $\III$ based on the geometry of their lattices of projections. Type $\I$ factors are exactly the factors of the form $\B(\H_1)\otimes \1_{\H_2}$ acting on the Hilbert space $\H_1\otimes \H_2$. A representation $\pi$ is factorial if and only if the von Neumann algebra $\pi(G)'$ (equivalently $\pi(G)''$) is a factor, and a second-countable group is of type $\I$ if and only if $\pi(G)''$ is of type $\I$ for every factorial representation $\pi$ of $G$.

In the same series of papers, Murray and von Neumann highlight that not every group is of type $\I$ by examining left and right regular representations $\lambda_{G}$ and $\rho_G$ of discrete groups. They investigate the associated \emph{group von Neumann algebra} 
\[ \lambda_G(G)'=\rho_G(G)''\simeq \lambda_G(G)''\doteq \L(G).\] 
They show that when $G$ is discrete, $\lambda_{G}$ is factorial if and only if the nontrivial conjugacy classes of $G$ are infinite: the resulting von Neumann algebra $\L(G)$ is then a type $\II$ factor, as it admits a faithful normal tracial state. 

While this result completely settles the question of the factoriality of the regular representation of discrete groups, obtaining a similar result for non-discrete locally compact groups is a notoriously tricky problem (see, for instance, \cite{Vaes2025factorialitytwistedlocallycompact} which highlights fundamentally new phenomena in the non-discrete case). Over the years, several examples of non-discrete groups whose group von Neumann algebra is a factor appeared in the literature.

\begin{itemize}

\item First examples are based on the following proposition, which is a direct consequence of \cite[Theorem VIII]{vonNeumann1940RingsOperatorsIII}, \cite[Corollaire 2.2]{Sauvageot1977TypeProduitCroise} and \cite[Proposition 2.2]{Sutherland1978TypeAnalysis}.
\begin{Propnn}[{\textmd{\cite[Theorem VIII]{vonNeumann1940RingsOperatorsIII}}}]\label{Prop:factoriality of group measure space construction}
    Let $\alpha:G\to \Aut(N)$ be a continuous action by automorphisms of a locally compact group $G$ on an abelian locally compact group $N$. Denote by $\hat{\alpha}:G\acts \hat{N}$ the associated dual action on the Pontryagin dual of $N$. If $\alpha$ is essentially free and ergodic, then $\L(\hat{N}\rtimes_{\hat{\alpha}}G)$ is a factor.
\end{Propnn}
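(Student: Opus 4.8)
The plan is to identify $\L(\hat N \rtimes_{\hat\alpha} G)$ with a group-measure-space algebra and then invoke the classical factoriality results cited above. First, since $\hat N$ is a closed normal subgroup of $\hat N \rtimes_{\hat\alpha} G$ admitting $G$ as a closed complement, restricting the left regular representation to the subgroups $\hat N$ and $G$ inside $L^2(\hat N \rtimes_{\hat\alpha} G)\cong L^2(\hat N)\otimes L^2(G)$ gives the standard identification $\L(\hat N \rtimes_{\hat\alpha} G)\cong \L(\hat N)\rtimes_{\sigma} G$, where $\sigma_g\bigl(\lambda_{\hat N}(\chi)\bigr) = \lambda_{\hat N}\bigl(\hat\alpha_g(\chi)\bigr)$. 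I would then Fourier-transform the abelian part: Pontryagin duality together with Stone's theorem yields a normal $*$-isomorphism $\L(\hat N)\cong \Linf(\widehat{\hat N}) = \Linf(N)$ carrying $\lambda_{\hat N}(\chi)$ to the function $x\mapsto\chi(x)$, and a direct computation shows that under this isomorphism $\sigma_g$ becomes $f\mapsto f\circ\alpha_g^{-1}$ on $\Linf(N)$, \ie the Koopman action attached to $\alpha\colon G\acts N$. Hence $\L(\hat N \rtimes_{\hat\alpha} G)\cong \Linf(N)\rtimes_{\alpha} G$.

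Next I would observe that this is genuinely a group-measure-space von Neumann algebra: a continuous automorphism of a locally compact abelian group multiplies a Haar measure $m_N$ by a positive scalar (its module), so $\alpha\colon G\acts(N,m_N)$ is a nonsingular action and the module scalars are absorbed into the associated Radon--Nikodym cocycle. By hypothesis this action is essentially free and ergodic. Von Neumann's Theorem VIII \cite{vonNeumann1940RingsOperatorsIII} settles the case of discrete $G$, and the type analyses of Sauvageot \cite{Sauvageot1977TypeProduitCroise} and Sutherland \cite{Sutherland1978TypeAnalysis} extend it to arbitrary (second countable) locally compact $G$: in every case an essentially free ergodic nonsingular action produces a factor. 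Concretely, essential freeness confines the center of $\Linf(N)\rtimes_\alpha G$ to the abelian subalgebra $\Linf(N)$, where it reduces to the algebra $\Linf(N)^G$ of $G$-invariant functions, and ergodicity makes the latter trivial.

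The main obstacle lies less in any single step than in the bookkeeping required by the non-discrete setting. The two structural identifications --- $\L(H\rtimes G)\cong\L(H)\rtimes G$ for a semidirect product and $\L(\hat N)\cong\Linf(N)$ for an abelian group --- are classical, but for non-discrete $H$ and $G$ they demand a careful spatial treatment on $L^2$ of the ambient group together with a Fubini/direct-integral argument, rather than the bare manipulation of group elements available in the discrete case; moreover one must keep track of the inversion in the dual action $\hat\alpha_g(\chi)=\chi\circ\alpha_g^{-1}$ to be certain the transported action is $\alpha$ itself. The genuinely deep input, however, is the factoriality of group-measure-space algebras of essentially free ergodic nonsingular actions of non-discrete locally compact groups: this is exactly what \cite{vonNeumann1940RingsOperatorsIII}, \cite{Sauvageot1977TypeProduitCroise} and \cite{Sutherland1978TypeAnalysis} supply, and here it is used as a black box.
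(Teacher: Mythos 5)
Your proposal is correct and follows exactly the route the paper intends: the paper states this proposition without proof, as a direct consequence of the cited results of von Neumann, Sauvageot and Sutherland, and your reduction $\L(\hat N\rtimes_{\hat\alpha}G)\cong \L(\hat N)\rtimes G\cong \Linf(N)\rtimes_\alpha G$ followed by the appeal to factoriality of group-measure-space algebras of essentially free ergodic nonsingular actions is precisely that derivation. You are also right to isolate the non-discrete subtlety (that essential freeness forces the relative commutant of $\Linf(N)$ into $\Linf(N)$ itself) as the genuinely deep input supplied by the cited references.
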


For example, given a local field $\kk$,  $\L(\kk\rtimes \kk^*)$ is a factor of type $\I$. Semi-restricted direct products of such groups are used in \cite{Blackadar1977RegularRepresentation} (and more recently in \cite{Vaes2025factorialitytwistedlocallycompact}) to give examples of non-discrete groups such that $\L(G)$ is a factor of type $\III$. Sutherland applies the same theorem in \cite{Sutherland1978TypeAnalysis} to discrete matrix groups acting on $\rr^2$ and exhibits groups $G$ with $\L(G)$ being a factor of any type.

\item A second source of examples is the literature related to the companion question of establishing $\Cstar$-simplicity for some non-discrete locally compact groups\footnote{We say that a group $G$ is $\Cstar$-simple if its reduced $\Cstar$-algebra $\Cstar_r(G)$ is simple. As $\L(G)$ is the \emph{weak} completion of $\Cstar_r(G)$, we are basically investigating ``$\Wstar$-simplicity'' for locally compact groups, von Neumann algebras being also called $\Wstar$-algebras.}.  The main theorem of \cite{Suzuki2017Elementary} establishes $\Cstar$-simplicity for a class of totally disconnected locally compact groups ``well approximated'' by $\Cstar$-simple discrete groups. Following the same lines of proof, showing that some locally compact groups that are ``well approximated'' by discrete groups with infinite conjugacy classes have factorial regular representation is straightforward. Using this criterion of $\Cstar$-simplicity, Raum shows in \cite{Raum2021Erratum} that the non-discrete profinite completions $G(m,n)$ of  Baumslag-Solitar groups are $\Cstar$-simple (see \cite{Mukohara2024Csimplicity} for generalizations), and the same proof implies that their regular representations are factorial (see \Cref{subsection:HNN extensions} for details). Still motivated by $\Cstar$-simplicity and given any locally compact group $H$, Suzuki builds in \cite[Theorem 5.1]{Suzuki2022Csimplicity} a group $G$ having $H$ as an open subgroup and such that $\L(G)$ is a factor: this shows that neither the $\Cstar$-simplicity of a group nor the factoriality of its von Neumann algebra admit local obstructions.

\item In the article \cite{BeltitaBelti2024RegularRepresentation}, the authors give a criterion of factoriality for the regular representation of solvable connected and simply connected Lie groups in terms of the orbit of the coadjoint representation of the group, following a rich literature on representation theory of solvable Lie groups developed among others by Kirillov a Pukánsky.

\item Recently, the preprints  \cite{GuintoNelson2025AlmostUnimodularGroups} and \cite[Theorem 4.5]{Miyamoto2025PlancherelWeight} explain how the factoriality of the von Neumann algebra of a non-unimodular locally compact group $G$ relates to the center of the von Neumann algebra of its unimodular part $\L(\ker \Delta_G)$.
\end{itemize}

In this article, we present a new criterion of factoriality for totally disconnected second countable locally compact groups von Neumann algebras, that applies most significantly to the family of Neretin groups introduced in \cite{Neretin1992Combinatorial}. Given $k,d\geq 2$, the Neretin group $\Ndk$ can be defined equivalently as the group of almost automorphisms of a rooted tree $\Tdk$ or as the group of local similarities of the boundary of the same tree (see \cite{GarncarekLazarovich2018NeretinGroups} for a detailed account on the Neretin groups). These groups have attracted much attention over the years due to their remarkable properties: they are abstractly simple \cite{Kapoudjian1999Simplicity}, compactly presented \cite{LeBoudec2017CompactPresentability}, are the first examples of locally compact groups with no lattices \cite{BaderCapraceGelanderMozes2012SimpleGroups} and admit no nontrivial invariant random subgroups \cite{Zheng2019NeretinGroups}. The question of the $\Cstar$-simplicity of the Neretin group is an interesting and still open problem. 

Any Neretin group $\Ndk$ contains an open amenable subgroup consisting of local isometries of the boundary of the tree, denoted by $\Odk$. Note that $\L(\Ndk)$ contains $\L(\Odk)$ as a von Neumann subalgebra: more generally, if $G$ is a locally compact group and $H$ is a closed subgroup of $G$, then 
\[\L(H)=\lambda_{H}(H)''\simeq \lambda_G(H)''\subset \lambda_G(G)''=\L(G).\]
We now state our main theorem: 
\begin{mainthm}[\Cref{thm:Neretin}]\label{mainthm:Neretin} The regular representation $\lambda_{\Ndk}$ of a Neretin group $\Ndk$ is factorial. More precisely, denoting by $\Odk$ is its open amenable subgroup of local isometries, we have
    \[\L(\Odk)'\cap\L(\Ndk)=\cc.\]
As a consequence, $\L(\Odk)$ is the hyperfinite $\II_{\infty}$ factor and $\L(\Ndk)$ is a type $\II_{\infty}$ factor.
\end{mainthm}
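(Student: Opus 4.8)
The plan is to reduce the factoriality of $\L(\Ndk)$ to the relative commutant computation $\L(\Odk)'\cap\L(\Ndk)=\cc$, and then to establish that computation using the dynamics of $\Ndk$ on the boundary $\bTdk$. First I would recall that $\L(\Odk)$ sits inside $\L(\Ndk)$ as the von Neumann algebra generated by the open subgroup $\Odk$, and that an element $T\in\L(\Odk)'\cap\L(\Ndk)$ that is central in $\L(\Ndk)$ is in particular fixed by conjugation by all of $\Ndk$. Since $\Ndk$ is generated by $\Odk$ together with elements implementing the almost-automorphism structure, showing $\L(\Odk)'\cap\L(\Ndk)=\cc$ immediately forces the center of $\L(\Ndk)$ to be $\cc$, giving factoriality; the $\II_\infty$ statement then follows because $\Odk$ is open and amenable (so $\L(\Odk)$ is hyperfinite and, being non-unimodular as a stabilizer-type subgroup of a unimodular group, carries a semifinite but non-finite trace, forcing type $\II_\infty$), and $\L(\Ndk)\supset\L(\Odk)$ inherits semifiniteness from the unimodularity of $\Ndk$ while ruling out type $\I$ via the ICC-like behaviour of $\Ndk$.

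The core is therefore the relative commutant. I would model $\L(\Odk)$ (or a slightly larger abelian-by-amenable corner) as a crossed product of $\Linf(\bTdk,\mu)$ by $\Odk$, using the standard identification of local-isometry groups of the boundary with groupoid/crossed-product constructions; concretely $\bTdk$ with its natural measure $\mu$ is an $\Odk$-space, and the Koopman-type decomposition of $\lambda_{\Odk}$ realizes $\Linf(\bTdk,\mu)$ as (contained in) $\L(\Odk)$. An element $T$ of the relative commutant commutes with this copy of $\Linf(\bTdk)$, hence, by a standard argument, lies in $\Linf(\bTdk)\rtimes$(something) and decomposes along the measure $\mu$; commuting further with $\Odk$ forces the component functions to be $\Odk$-invariant. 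The key input is then that the $\Odk$-action on $(\bTdk,\mu)$ is \emph{ergodic}, so $\Odk$-invariant functions are constant, which collapses the relative commutant to $\cc$.

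The main obstacle — and where the real work lies — is proving the required ergodicity/mixing of the $\Odk$-action on $\bTdk$ with enough uniformity to kill not just the scalar part but every "off-diagonal" contribution coming from the non-discreteness of $\Ndk$. Unlike the discrete ICC case, here one must control the behaviour of $T$ under conjugation by an \emph{uncountable} family and verify that the only $\Ndk$-fixed vectors in the appropriate bimodule are the obvious ones. I expect this to require the paper's "new criterion of factoriality for totally disconnected groups": presumably a condition phrased in terms of the action of $G$ on a suitable coset space or boundary being such that the stabilizers have trivial relative commutant, verified for Neretin groups by exploiting that any nontrivial element of $\Ndk$ moves a cylinder of $\bTdk$ off itself (the geometric analogue of the infinite-conjugacy-class condition) together with the transitivity of $\Odk$ on cylinders of each depth. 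Once that criterion is in place, the verification for $\Ndk$ reduces to these two concrete combinatorial facts about the tree $\Tdk$, and the type computation is a short epilogue.

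Second part:
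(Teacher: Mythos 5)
Your overall architecture (reduce to the relative commutant $\L(\Odk)'\cap\L(\Ndk)=\cc$, then get the type from amenability and an irreducible proper inclusion) matches the paper, but the core of your argument has a genuine gap. You propose to realize $\Linf(\bTdk,\nu)$ as a subalgebra of $\L(\Odk)$ via a ``Koopman-type decomposition'' and then invoke ergodicity of $\Odk\acts\bTdk$. This conflates the group von Neumann algebra with the crossed product: $\Linf(\bTdk,\nu)$ sits inside $\Linf(\bTdk,\nu)\rtimes\Ndk$ (the algebra treated in the paper's Section 2), not inside $\L(\Ndk)=\lambda_{\Ndk}(\Ndk)''\subset\B(\Ldeux(\Ndk))$. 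There is no canonical copy of the boundary algebra inside the von Neumann algebra of the regular representation, so the decomposition of a commuting element ``along the measure $\nu$'' and the subsequent appeal to ergodicity have nothing to act on. An ergodicity argument of this shape proves factoriality of the crossed product, which is a different (and here, type $\III_{1/d}$) algebra.

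The second gap is that the criterion you defer to is not merely qualitative. The paper's condition is that for every nontrivial $g$ one has $\limsup_n \ccc{\Odk}{g\Kdkn{n}}\,\mu(\Kdkn{n})^2=\infty$: the number of conjugates of the coset $g\Kdkn{n}$ under $\nmlz{\Odk}{\Kdkn{n}}$ must grow fast enough to beat the \emph{square} of the index $\ind{\Kdk}{\Kdkn{n}}$. Your two combinatorial facts (``every nontrivial element moves a cylinder off itself'' and ``transitivity of $\Odk$ on cylinders'') are indeed the starting point, but they only give infinitely many conjugates; the actual work is to lower-bound $\ccc{\Odk}{g\Kdkn{n}}$ by the index of a rigid stabilizer $\rist_{\Kdk}(B^w)$ inside $\rist_{\Odkn{n}}(B^w)$, which is $(d^{n-n_0})!$, and to check that this factorial growth dominates $k!(d!)^{2kd^n}$. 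Without that quantitative step the criterion is not verified. Finally, a smaller error: you assert $\Odk$ is non-unimodular and deduce type $\II_\infty$ from that; in fact $\Odk$ is unimodular (it is an increasing union of its compact open subgroups $\Odkn{n}$), and non-unimodularity would be an obstruction to having a trace at all. The correct route is: unimodular and non-discrete gives an infinite trace, hence type $\I_\infty$ or $\II_\infty$, and the proper irreducible inclusion $\L(\Odk)\subset\L(\Ndk)$ rules out type $\I$ for both.
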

To our knowledge, this provides the first example of a non-discrete simple group whose von Neumann algebra is a factor. The type of $\L(\Odk)$ and $\L(\Ndk)$ directly follows from their unimodularity and the triviality of $\L(\Odk)'\cap\L(\Ndk)$, which gives a new independent proof that $\Ndk$ and $\Odk$ are not of type $\I$, two results obtained in \cite{CapraceLeBoudecMatteBon2022Piecewise} for $\Ndk$ and in \cite{Arimoto2022Type} for $\Odk$.

\Cref{thm:Neretin} is a consequence of the following general criterion of factoriality. Let $H$ be a closed subgroup of $G$ and $K$ be a compact open subgroup of $G$. The normalizer of $K$ in $H$, denoted $\nmlz{H}{K}\doteq\{h\in H\mid hKh^{-1}=K\}$, acts by conjugation on the space of left cosets $G/K$. For $g\in G$, let $\ccc{H}{gK}$ denote the cardinal of the orbit of $gK$ under this action.

\begin{mainthm}[\Cref{the:irreducible inclusion for groups with etoile_H}]\label{mainthm:irreducible inclusion for groups with etoile_H}
    Let $G$ be a totally disconnected locally compact group endowed with a left Haar measure $\mu_G$ and let $\Delta_G:G\to \rr^*_+$ be the modular function of $G$. If $H$ is a closed subgroup of $G$ with the following properties
    \begin{equation*}\tag{$\hspace*{1mm}\star_{H}$} 
        \left\{
        \begin{array}{l}
        \text{1. }H \leq \ker \Delta_G \\
          \text{2. }\text{There exists a basis at the identity of }G\text{ in compact open subgroups } (K_n)_{n \in \mathbb{N}}\\ 
          \text{ such that for every nontrivial } g \in G,\ \limsup_{n \in \mathbb{N}} \ccc{H}{gK_n} \mu_G(K_n)^2 = \infty,
        \end{array}
        \right.
      \end{equation*}
    then
    \[ \L(H)'\cap \L(G)=\cc.\]
    Thus, both $G$ and $H$ have a factorial left regular representation.
\end{mainthm}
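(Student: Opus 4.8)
The plan is to view $T\in\L(H)'\cap\L(G)$ as an operator on $L^2(G,\mu_G)$ commuting with both $\lambda_G(H)$ and $\rho_G(G)$ (recall $\L(G)=\rho_G(G)'$ and $\L(H)=\lambda_G(H)''$), and to show that any such $T$ is a scalar. For each $n$ put $e_n:=\mu_G(K_n)^{-1}\un_{K_n}$; since $\Delta_G$ is trivial on the compact group $K_n$ and $K_n$ is a subgroup, $e_n*e_n=e_n=e_n^{*}$, so $p_n:=\lambda_G(e_n)$ and $q_n:=\rho_G(e_n)$ are the orthogonal projections onto the left‑, resp.\ right‑$K_n$‑invariant vectors, and $(e_n)$ being an approximate identity, $p_n,q_n\to 1$ strongly.

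First I would record the data attached to $T$. As $T$ commutes with $\rho_G(K_n)$, the vector $\eta_n:=Te_n=Tq_ne_n=q_n\eta_n$ is right‑$K_n$‑invariant, i.e.\ a function on $G/K_n$ with $\eta_n(gK_n)=\langle Te_n,\lambda_G(g)e_n\rangle$ (using $\mu_G(gK_n)=\mu_G(K_n)$), and $\sum_{gK_n\in G/K_n}|\eta_n(gK_n)|^{2}=\mu_G(K_n)^{-1}\|\eta_n\|_2^{2}\le\|T\|^{2}\mu_G(K_n)^{-2}$. Moreover $Tp_n$ lies in the domain of the Plancherel weight (since $p_nT^{*}Tp_n\le\|T\|^{2}p_n$ and $p_n$ has finite weight), and one checks $Tp_n=\lambda_G(\eta_n)$, the closure of left convolution by $\eta_n$; this is the bridge back to $T$ at the end.

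The key step is to exploit $H\le\ker\Delta_G$. For $h\in\nmlz{H}{K_n}$ one has $hK_n=K_nh$ and $\Delta_G(h)=1$, hence $\lambda_G(h)e_n=\rho_G(h^{-1})e_n$; combining this with $\lambda_G(h)T=T\lambda_G(h)$ and $\rho_G(h^{-1})T=T\rho_G(h^{-1})$ gives $\lambda_G(h)\rho_G(h)\eta_n=\eta_n$, i.e.\ $\eta_n(h^{-1}xh)=\eta_n(x)$; as a function on $G/K_n$ this says $\eta_n$ is constant on $\nmlz{H}{K_n}$‑conjugation orbits. Being square‑summable, its value on any orbit of cardinality $\ccc{H}{gK_n}$ is bounded via the global estimate, yielding
\[\bigl|\langle Te_n,\lambda_G(g)e_n\rangle\bigr|^{2}=\bigl|\eta_n(gK_n)\bigr|^{2}\le\frac{\|T\|^{2}}{\ccc{H}{gK_n}\,\mu_G(K_n)^{2}}.\]
For $g\neq e$, Property 2 of $(\star_{H})$ makes the right‑hand side have infimum $0$ over $n$, so $\langle Te_n,\lambda_G(g)e_n\rangle\to 0$ along a subsequence; and since $\mu_G(K_n)\langle Te_n,e_n\rangle$ is bounded by $\|T\|$, it has a cluster point $c$.

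To conclude, set $S:=T-c1$, again commuting with $\lambda_G(H)$ and $\rho_G(G)$. Since $\{\rho_G(g)e_n:g\in G,\ n\in\nn\}$ is total and $S$ commutes with $\rho_G(G)$, it suffices to make $Se_n=\eta_n-c\,e_n$ small; writing $\lambda_G(\eta_n-c\,e_n)=Tp_n-c\,p_n=Sp_n\to S$ strongly along the chosen subsequence, and splitting $\eta_n-c\,e_n$ into its value at the coset $K_n$ (which is $o(\mu_G(K_n)^{-1})$ by the choice of $c$) and its off‑diagonal part, one estimates the latter coset by coset through the displayed inequality. The main obstacle is precisely here: one must upgrade the pointwise‑in‑$g$ decay supplied by $(\star_{H})$ to decay that is \emph{uniform over the cosets meeting a fixed compact set} and holds along a \emph{single} subsequence, after which $\lambda_G(\eta_n-c\,e_n)\to 0$ strongly and hence $T=c1$. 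Finally $\L(H)'\cap\L(G)=\cc$ forces $Z(\L(G))=\L(G)'\cap\L(G)\subseteq\L(H)'\cap\L(G)$ and $Z(\L(H))=\L(H)'\cap\L(H)\subseteq\L(H)'\cap\L(G)$ to be trivial, so $\L(G)$ and $\L(H)$ are factors, i.e.\ $\lambda_G$ and $\lambda_H$ are factorial.
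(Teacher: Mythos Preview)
Your setup is essentially the paper's: your $\langle Te_n,\lambda_G(g)e_n\rangle$ equals $\mu_G(K_n)^{-1}\widehat{T}(gK_n)$ in the paper's Fourier-coefficient notation, and your displayed inequality is exactly Lemma~1.8 rewritten. The conjugation-invariance argument via $\nmlz{H}{K_n}$ is the same key idea.

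The gap is the one you yourself flag: you need to pass from the pointwise-in-$g$ decay furnished by $(\star_H)$ to a statement strong enough to force $\lambda_G(\eta_n-c\,e_n)\to 0$. As written this does not follow: as $n$ grows, a fixed compact set contains $\sim\mu_G(K_n)^{-1}$ cosets, and $(\star_H)$ gives no uniformity in $g$ and no common subsequence, so summing or supping the coset-wise bounds does not yield smallness of the convolution operator. Your sentence ``after which $\lambda_G(\eta_n-c\,e_n)\to 0$ strongly'' is precisely the unproved step.

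The paper closes this gap by reversing the logic: instead of pushing all Fourier coefficients to zero simultaneously, it argues by contradiction from a \emph{single} nonzero coefficient $\widehat{z}(gK)$ with $g\notin K$, and uses a monotonicity/refinement property (Proposition~1.5: if $K'\le K$ and $gK=\bigsqcup g_iK'$, then $\widehat{z}(gK)=\sum_i\widehat{z}(g_iK')$, so some $g'\in gK$ satisfies $|\widehat{z}(g'K')|/\mu_G(K')\ge |\widehat{z}(gK)|/\mu_G(K)$). Iterating along the basis $(K_n)$ produces a nested sequence $g_nK_n\subset gK$ with $|\widehat{z}(g_nK_n)|\ge \mu_G(K_n)\cdot|\widehat{z}(gK)|/\mu_G(K)$; the $g_n$ converge to a single $h\neq e$, and then Lemma~1.8 at $hK_n$ gives
\[
\|z\|_\infty^2\ \ge\ c_H(hK_n)\,\mu_G(K_n)^2\cdot\frac{|\widehat{z}(gK)|^2}{\mu_G(K)^2},
\]
which blows up by $(\star_H)$. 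This refinement step is exactly the missing ingredient in your argument: it localizes the problem to one limit point $h$, so that only the pointwise hypothesis of $(\star_H)$ is needed and no uniformity is required. Once all off-identity Fourier coefficients vanish, the paper concludes $z\in\cc$ by the definiteness property of Fourier coefficients (its Proposition~1.7), which also streamlines your cluster-point-of-$c$ manoeuvre.
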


When $G$ is a discrete group and $H=G$, property $\etoile{H}$ is equivalent to the group having infinite conjugacy classes. Apart from Neretin groups, this theorem applies to natural generalizations of the Neretin groups such as \emph{almost automorphism groups associated with regular branch groups }(see \cite[Section 7]{LeBoudec2017CompactPresentability}), \emph{coloured Neretin groups} (see \cite{Lederle2019ColouredNeretin}), as well as to any closed subgroup of $\Ndk$ containing $\Odk$. Along the way, our methods yield several byproducts: we show how they can be applied to prove factoriality for certain $\HNN$ extensions as in \cite{Raum2021Erratum}, and we obtain factoriality results for Hecke von Neumann algebras associated with amalgamated free products, which apply in particular to some Burger–Mozes groups. 

The irreducible inclusion established in \Cref{mainthm:irreducible inclusion for groups with etoile_H} enables us to determine the type of the von Neumann factors under consideration. Most significantly, property $\etoile{H}$ can often be used as a non-type $\I$ criterion, for both $G$ and $H$ (a more precise description of the type of $\L(G)$ is given in \Cref{cor:type of L(G)}.)

\begin{maincor}[\Cref{cor:non typeI factors}]\label{maincor:cor:non typeI factors}
    Let $G$ be a locally compact group. If $G$ has property $\etoile{H}$ with respect to a closed proper subgroup $H$, then both $\L(G)$ and $\L(H)$ are non-type $\I$ factors.
\end{maincor}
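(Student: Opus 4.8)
The plan is to derive everything from \Cref{the:irreducible inclusion for groups with etoile_H}, which already tells us that $\L(H)'\cap\L(G)=\cc$ and that both $\L(G)$ and $\L(H)$ are factors. So the only thing left is to exclude type $\I$, and the crux is the following separation statement, which I would establish first: \emph{since $H$ is a proper closed subgroup of $G$, the inclusion $\L(H)\subseteq\L(G)$ of von Neumann algebras acting on $\Ldeux(G)$ is strict}. To see this, pick $g_0\in G\setminus H$; the homogeneous space $H\backslash G$ is then a locally compact Hausdorff space with at least two points, so it carries a non-constant bounded continuous function, which pulls back to a left-$H$-invariant function $m\in\Linf(G)$. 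The multiplication operator $M_m$ commutes with $\lambda_G(h)$ for every $h\in H$, hence $M_m\in\L(H)'$, while $M_m$ is neither a scalar nor left-$G$-invariant, so $M_m\notin\L(G)'$. Thus $\L(G)'\subsetneq\L(H)'$, and taking commutants gives $\L(H)\subsetneq\L(G)$.

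Next I would show that $\L(G)$ is not of type $\I$. Assume it is; being a factor, there is an abstract $*$-isomorphism $\L(G)\cong\B(\H)$, which carries $\L(H)$ onto a von Neumann subalgebra $N_0\subseteq\B(\H)$ with $N_0'=N_0'\cap\B(\H)=\cc$ (relative commutants are preserved under isomorphisms of pairs). The bicommutant theorem then forces $N_0=N_0''=\B(\H)$, i.e. $\L(H)=\L(G)$, contradicting the strictness established above. Hence $\L(G)$ is a non-type $\I$ factor.

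Finally, for $\L(H)$, suppose it were of type $\I$. Being a factor, it contains a minimal projection $e$, so $e\L(H)e=\cc e$. Applying the standard reduction formula $(eNe)'\cap eMe=e\,(N'\cap M)\,e$ (valid for $N\subseteq M$ with $e$ a projection in the subfactor $N$), together with $\L(H)'\cap\L(G)=\cc$, we get $(e\L(H)e)'\cap e\L(G)e=\cc e$; but the left-hand side is $(\cc e)'\cap e\L(G)e=e\L(G)e$, so $e\L(G)e=\cc e$, meaning $e$ is a minimal projection of $\L(G)$. A factor with a minimal projection is of type $\I$, so $\L(G)$ would be type $\I$ — contradicting the previous paragraph. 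Therefore $\L(H)$ is a non-type $\I$ factor as well.

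The step I expect to be the real obstacle is the first one: checking carefully that a proper closed subgroup genuinely yields a strict inclusion at the level of group von Neumann algebras (handling the measure-theoretic subtleties of $H\backslash G$ and the multiplication operators). Once that separation is secured, ruling out type $\I$ for both $\L(G)$ and $\L(H)$ is a soft consequence of the irreducibility $\L(H)'\cap\L(G)=\cc$ and elementary factor theory — the behaviour of minimal projections and reduction by a projection.
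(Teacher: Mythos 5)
Your proof is correct, and its overall strategy --- deducing the non-type-$\I$ statements from the irreducibility $\L(H)'\cap\L(G)=\cc$ of \Cref{the:irreducible inclusion for groups with etoile_H} together with the strictness $\L(H)\subsetneq\L(G)$ --- is the same as the paper's; your argument for $\L(G)$ (the relative bicommutant in a type $\I$ factor forces $\L(H)=(\L(H)'\cap\L(G))'\cap\L(G)=\L(G)$) is literally the one in the paper. Two points differ. For $\L(H)$, the paper passes to commutants: $\L(G)'\subset\L(H)'$ is again an irreducible inclusion since $(\L(G)')'\cap\L(H)'=\L(H)'\cap\L(G)=\cc$, so if $\L(H)'$ were type $\I$ then $\L(H)'=\L(G)'$ and hence $\L(H)=\L(G)$; since taking commutants preserves the type of a factor, $\L(H)$ cannot be type $\I$. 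Your route via a minimal projection $e\in\L(H)$ and the compression formula $(e\L(H)e)'\cap e\L(G)e=e\left(\L(H)'\cap\L(G)\right)e$ (valid here because $e$ has central support $1$ in the factor $\L(H)$) reaches the same conclusion and is equally legitimate. The second difference is a genuine added value of your write-up: you actually prove that $H\neq G$ implies $\L(H)\neq\L(G)$, via the multiplication operator $M_m$ attached to a non-constant bounded continuous left-$H$-invariant function, whereas the paper simply asserts that $H\neq G$ places us ``in this situation'' without justification. Your argument for this step is sound; the only details worth spelling out are that the locally compact Hausdorff space $H\backslash G$ with at least two points carries such a function by Urysohn, and that $M_m$ commuting with every $\lambda_g$ would force $m$ to be a.e.\ invariant under all left translations, hence a.e.\ constant by ergodicity of the translation action, hence constant since Haar measure has full support --- contradicting non-constancy.
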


In the second part of the article, we leverage property $\etoile{H}$ to obtain factoriality criterion for \emph{crossed products}. Given a continuous action $\alpha:G\acts M$ of a locally compact group on a von Neumann algebra, one can form the associated crossed product $M\rtimes_{\alpha}G$ which contains both $M$ and $\L(G)$ as subalgebras. When $G$ is a discrete group and $M$ is a factor, factoriality of $\L(G)$ implies factoriality of  $M\rtimes_{\alpha}G$, regardless of the action $\alpha$. However, this implication fails for non-discrete groups (see \cite[Proposition B]{Vaes2025factorialitytwistedlocallycompact}). 

\begin{mainthm}[\Cref{the:factoriality for crossed products}]\label{mainthm:factoriality for crossed products}
    Let $G$ be a locally compact group with property \eqref{eq:etoileH} relatively to a closed subgroup $H$ and $\alpha:G\acts (M,\psi)$ be a continuous action
    of $G$ on a von Neumann algebra $M$ endowed with a faithful normal state $\psi$. Let $G_{\psi}=\{g\in G\mid \psi\circ \alpha_g=\psi\}$ be the subgroup of $\psi$-preserving elements. If $G_{\psi}$ is open and $H\leq G_{\psi}$, then
    \[\L(H)'\cap (M\rtimes_{\alpha} G)=M^H,\]
    where $M^H$ is the algebra of $H$-invariant elements in $M$.
    In particular, if $H\acts \Z(M)$ is ergodic, then $M\rtimes_{\alpha}G$ is a factor. 
\end{mainthm}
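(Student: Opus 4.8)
The plan is to reduce the statement about the crossed product $M \rtimes_\alpha G$ to the already-established irreducible inclusion $\L(H)' \cap \L(G) = \cc$ from \Cref{mainthm:irreducible inclusion for groups with etoile_H}. First I would set up notation: write $M \rtimes_\alpha G$ concretely on $\Ldeux(G) \otimes \H_\psi$ (where $\H_\psi = \Ldeux(M,\psi)$ is the GNS space of $\psi$), with $M$ represented via $\pi_\alpha$ and $G$ via the modified left translation $\lambda^\alpha$, so that $\L(G) \hookrightarrow M \rtimes_\alpha G$ is a genuine von Neumann subalgebra. The key structural feature I want to exploit is that $G_\psi$ is open and $H \leq G_\psi$: openness of $G_\psi$ means the compact open subgroups $K_n$ from property $\etoile{H}$ can be taken inside $G_\psi$ (intersect with $G_\psi$), so all the ``local'' analysis happens inside the part of $G$ that preserves $\psi$, where the crossed product looks like the one for a $\psi$-preserving action and the state $\psi$ extends to a normal (semifinite, after including the Haar weight on $\L(G_\psi)$) trace-like functional that behaves well.

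The heart of the argument is a conditional-expectation / averaging step. Let $x \in \L(H)' \cap (M \rtimes_\alpha G)$. I would expand $x$ in its ``Fourier coefficients'' along $G$: there is a canonical faithful normal operator-valued weight (or, restricting to the open subgroup $G_\psi$ where things are trace-preserving, an operator-valued map) $M\rtimes_\alpha G \to M$ extracting coefficients, and more generally for each coset the coefficient lies in $M$. Commutation of $x$ with $\lambda^\alpha(h)$ for all $h \in H$ forces the coefficient function to be constant along $H$-orbits in an appropriate sense, and since $H \leq G_\psi \leq \ker\Delta_G$ the relevant Radon–Nikodym cocycles are trivial, so no modular corrections appear. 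Then I would invoke property $\etoile{H}$ exactly as in the proof of \Cref{mainthm:irreducible inclusion for groups with etoile_H}: for any nontrivial $g$, the blow-up $\limsup_n \ccc{H}{gK_n}\mu_G(K_n)^2 = \infty$ is incompatible with the coefficient at $g$ being a \emph{bounded} operator unless that coefficient vanishes. This kills all coefficients outside the identity coset, leaving $x \in M$; and commutation with $H$ then says precisely $x \in M^H$. Conversely $M^H \subseteq \L(H)' \cap (M\rtimes_\alpha G)$ is immediate since $M$ commutes with nothing a priori — rather, one checks directly that $m \in M^H$ satisfies $\lambda^\alpha(h) \pi_\alpha(m) \lambda^\alpha(h)^* = \pi_\alpha(\alpha_h(m)) = \pi_\alpha(m)$.

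For the final assertion, I would compute the center: $\Z(M \rtimes_\alpha G) \subseteq \L(H)' \cap (M\rtimes_\alpha G) = M^H$, and an element of $M^H$ that is moreover central in $M \rtimes_\alpha G$ must in particular be central in $M$ and fixed by all of $\alpha(G)$, hence lies in $\Z(M)^G \subseteq \Z(M)^H$; so $\Z(M\rtimes_\alpha G) \subseteq \Z(M)^H$, and ergodicity of $H \acts \Z(M)$ gives $\Z(M)^H = \cc$, i.e. $M \rtimes_\alpha G$ is a factor.

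The main obstacle I anticipate is making the ``Fourier coefficient'' bookkeeping rigorous in the non-discrete, not-necessarily-$\psi$-preserving setting: the crossed product by a non-discrete group has no honest Fourier expansion, so I expect to need to work through the compact-open subgroup $K_n$ to cut down to the discrete quotient $G/K_n$ (where coefficients genuinely make sense), run the argument there using the projections $e_{K_n} \in \L(G)$, and then take a limit $n \to \infty$ controlled by property $\etoile{H}$ — precisely mirroring, but now with $M$-valued rather than scalar-valued coefficients, the mechanism already deployed to prove \Cref{mainthm:irreducible inclusion for groups with etoile_H}. The openness hypothesis on $G_\psi$ is what makes this limiting process compatible with the state $\psi$; without it the Radon–Nikodym derivatives $d(\psi\circ\alpha_g)/d\psi$ would obstruct the boundedness estimate, which is exactly the non-discrete pathology highlighted in \cite{Vaes2025factorialitytwistedlocallycompact}.
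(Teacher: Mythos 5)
Your plan follows essentially the same route as the paper: $M$-valued Fourier coefficients $\fc{x}{gK}\in M$ measured in the $\psi$-norm, the monotonicity/zoom-in step made valid by refining $K$ into the open subgroup $G_\psi$, the Hilbert-space inequality $\norm{z}_\infty^2\geq \ccc{H}{gK}\norm{\fc{z}{gK}}_\psi^2$ coming from commutation with $\lambda(\nmlz{H}{K})$ and $\psi$-invariance, and the blow-up from $\etoile{H}$ killing all coefficients off the identity coset, leaving $z\in M^H$ and then $\Z(M\rtimes_\alpha G)\subseteq \Z(M)^H$. This is exactly the paper's argument, so no substantive divergence to report.
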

As an application, we obtain this von Neumann algebraic counterpart of \cite[Theorem B]{Arimoto2025Simplicity}:
 \begin{applnn}[\Cref{Prop:factoriality of the action of Neretin on its boundary}]
    Let $\Ndk$ be a Neretin group, $\nu$ be the visual probability measure on $\bTdk$ and $\alpha:\Ndk\acts \Linf(\bTdk,\nu)$ be the action induced by the non-singular action of $\Ndk$ on $(\bTdk,\nu)$. Then $\Linf(\bTdk,\nu) \rtimes_{\alpha} \Ndk$ is a type $\III_{\frac{1}{d}}$ factor.
\end{applnn}

\textbf{Organization of the paper.} 
The paper is divided into two main parts. 
In \Cref{section:Factoriality Criterion for Group von Neumann Algebras}, we establish all the results concerning group von Neumann algebras. After recalling the relevant notions in \Cref{subsection:group Preliminaries}, we investigate in \Cref{subsection:group Fourier coeffs} the basic properties of \emph{Fourier coefficients} on totally disconnected group von Neumann algebras. Using these Fourier coefficients, we then establish \Cref{mainthm:irreducible inclusion for groups with etoile_H} together with its corollaries in \Cref{subsection: group factoriality}, and apply them to various group families in \Cref{subsection: Hecke algebra of AFP}, \Cref{subsection: Factoriality of HNN extensions} and to the Neretin groups in \Cref{subsection:Neretin}. We emphasize that the proofs of our main results ---\Cref{mainthm:irreducible inclusion for groups with etoile_H}, \Cref{maincor:cor:non typeI factors} and \Cref{mainthm:Neretin}--- are elementary, and require no prior familiarity with von Neumann algebras.

\Cref{section:Factoriality Criterion for Crossed Products} adapts the methods of the first part to the more general setting of crossed product von Neumann algebras. While the core ideas are similar, the proofs are slightly more involved and written in a more operator-algebraic style. 

\tableofcontents

\section{Factoriality Criterion for Group von Neumann Algebras}\label{section:Factoriality Criterion for Group von Neumann Algebras}
    \subsection{Preliminaries}\label{subsection:group Preliminaries}
    \paragraph{Operator topologies.}
    Let $\H$ be a Hilbert space with scalar product denoted by $\ps{\cdot}{\cdot}$ and norm denoted by $\norm{\cdot}$. Let $\B(\H)$ be the algebra of bounded operators on $\H$ and $\U(\H)$ be its subgroup of unitary operators. For any $x\in \B(\H)$, let $\norm{x}_{\infty}$ denote its operator norm. Given vectors $\xi, \eta$ in $\H$, consider the following seminorms on $\B(\H)$:
    \begin{align*}
        \sigma_\xi\;:\; &\B(\H)\to \cc\colon \quad x\mapsto \norm{x\xi}\\
        \omega_{\xi,\eta}\;:\; &\B(\H)\to \cc\colon \quad x\mapsto \abs{\ps{x\xi}{\eta}}.
    \end{align*}
    Then the \emph{strong operator topology} (abbreviated as $\sot$) on $\B(\H)$ is the topology induced by the seminorms $\{\sigma_{\xi}\mid \xi \in \H\}$, and the \emph{weak operator topology} (abbreviated as $\wot$) on $\B(\H)$ is the topology induced by the seminorms $\{\omega_{\xi,\eta}\mid \xi,\eta \in \H\}$. 
    \paragraph{Von Neumann algebras.}
    Let $A \subset \B(\H)$ be a non-empty set. Its \emph{commutant} is the $*$-subalgebra 
    \( A'\doteq \{x\in\B(\H)\mid \forall a\in A, xa=ax\}\) of $\B(\H)$, 
    and  $A''\doteq (A')'$ is the \emph{bicommutant} of $A$. A \emph{von Neumann algebra} $M$ is a unital $*$-subalgebra of some $\B(\H)$ such that $M''=M$. By the von Neumann bicommutant theorem, a unital $*$-subalgebra $M$ of $\B(\H)$ is a von Neumann algebra if and only if it is closed in strong operator topology (equivalently, in weak operator topology). Two von Neumann algebras $M\subset \B(\H)$ and $N\subset \B(\K)$ are isomorphic if there exists an algebraic $*$-isomorphism between the $*$-algebras $M$ and $N$. 
    
    A von Neumann algebra $M$ is a \emph{factor} if its center $M'\cap M$ is trivial, \ie reduced to the scalar multiple of the identity. Factors are classified into types $\I_n$, $n\in \nn^*$, $\I_{\infty}$, $\II_1$, $\II_{\infty}$ and $\III_{\lambda}$, $\lambda\in [0,1]$. A factor is of type $\I_n$ if it is finite dimensional, of type $\I_{\infty}$ if it is infinite dimensional and admits a minimal projection. Any type $\I_n$ factor, with $n\in \nn\cup \{\infty\}$, is of the form $\B(\H_n)\otimes 1_{\K}\subset \B(\H_n\otimes \K)$, where $\H_n$ is a Hilbert space of dimension $n$. It is therefore isomorphic to the factor $\B(\H_n)$. A factor is of type $\II_1$ if it is not of type $\I$ and admits a \emph{trace} (that is a faithful normal tracial \emph{state}), and of type $\II_{\infty}$ if it is not of the previous types, but admits a faithful normal semifinite tracial \emph{weight}. Other factors are of type $\III$, and their further classification into subtypes $\III_{\lambda}$, $\lambda\in [0,1]$, was obtained in \cite{Connes1973Classification} via spectral properties of weights on the factor. For more precise and detailed definitions of these objects, see \cite{GuintoLorentzNelson2025MurrayvonNeumann}.
    
    Let $M\subset\B(\H)$ be a von Neumann algebra, and $p\in M$ be a projection. Then $pMp\subset \B(p\H)$ is again a von Neumann algebra, called \emph{corner} of the von Neumann algebra $M$. 
    
    Let $N\subset M$ be an inclusion of von Neumann algebras. A \emph{conditional expectation} $E:M\to N$ is a projection of norm one between the Banach space $M$ and $N$. A von Neumann algebra $M\subset\B(\H)$ is \emph{amenable} if there exists a conditional expectation $E:\B(\H)\to M$. Amenable von Neumann algebras are very well understood compared to the non-amenable ones since Connes and Haagerup have completely classified the amenable factors. In particular, there is only one isomorphism class of amenable factors of types $\II_1$, $\II_{\infty}$, and $\III_{\lambda}$, $\lambda>0$, such factors being called \emph{hyperfinite}. If $G$ is amenable as a group, then $\L(G)$ is amenable. The converse holds for discrete groups, but not in general.

    \paragraph{Locally compact groups.}
    Let $G$ be a locally compact group. We denote by $\mu_G$ a left Haar measure on $G$, and by $\Delta_G$ its modular function defined by the relation $\mu_G(Ag)=\Delta_G(g)\mu_G(A)$ for every Borel subset $A\subset G$. We denote by $\Ldeux(G,\mu_G)$ the Hilbert space of square-integrable functions on $G$ with respect to $\mu_G$. Subgroups $H \leq G $ will always be assumed to be closed. We write $K\co G$ to indicate that $K$ is a \emph{compact open} subgroup of $G$. 
    
    If $G$ is \emph{totally disconnected}, we denote by $\cosub{G}$ the directed set of compact open subgroups of $G$, and by $\lcocosets{G}$ the directed set of cosets of compact open subgroups: $\lcocosets{G}=\{gK\mid K\co G, g\in G\}=\{Kg\mid K\co G, g\in G\}$. By van Dantzig theorem, $\cosub{G}$ is a basis of neighborhood at the identity of $G$.

    \paragraph{Unitary representations.}
    A unitary representation is a pair $(\pi,\H)$ consisting of a Hilbert space $\H$ and a continuous homomorphism $\pi:G\to (\U(\H),\sot)$. We will mostly consider the left and right regular representations defined for $\xi\in \Ldeux(G,\mu_G)$, $g,x\in G$ by
    \begin{align*}
        &\lambda_G:G\to\U(\Ldeux(G,\mu_G))\colon\quad \lambda_G(g)\xi(x)=\xi(g^{-1}x)\\
        &\rho_G:G\to\U(\Ldeux(G,\mu_G))\colon\quad \rho_G(g)\xi(x)=\Delta_G(g)^{1/2}\xi(xg).
    \end{align*}
    When the group is clear from the context we write $\lambda_g$ and $\rho_g$ for $\lambda_G(g)$ and $\rho_G(g)$ respectively.

    \paragraph{Group von Neumann algebras.}

    Let $G$ be a locally compact group. The \emph{group von Neumann algebra} of $G$, denoted by $\L(G)$, is the von Neumann algebra $\lambda_G(G)''\subset \B(\Ldeux(G,\mu_G))$. By the von Neumann bicommutant theorem, 
    \[ \L(G)=\overline{\vect\{\lambda_g\mid g\in G\}}^{\sot}=\overline{\vect\{\lambda_g\mid g\in G\}}^{\wot}.\]
    Since $\lambda_G$ and $\rho_G$ are unitarily equivalent (\cite[Proposition A.4.1]{BekkadelaHarpeValette2008KazhdansPropertyT}), $\L(G)\simeq \rho_G(G)''$. We even have $\L(G)=\rho_G(G)'$ (\cite[Proposition VII.3.1]{Takesaki2003TheoryOperator}): while the inclusion $\L(G)\subset \rho_G(G)'$ is obvious, the reverse inclusion is a non-trivial result of modular theory. 

    Every group von Neumann algebra $\L(G)$ is endowed with endowed its \emph{Plancherel weight} $\phi_G$ (\cite[VII.3]{Takesaki2003TheoryOperator}). This is a normal semifinite faithful weight, characterized by $\phi_G(\lambda_G(f))=f(e)$ for every continuous function $f$ with compact support on $G$. When $G$ is unimodular, this weight is moreover tracial, and it is finite if and only if $G$ is discrete.

    Given a closed subgroup $H\cl G$, the group von Neumann algebra $\L(H)$ is isomorphic to the subalgebra $\lambda_G(H)''\subset \L(G)$(see \cite[Proposition 2.1]{Miyamoto2025PlancherelWeight}), and therefore we identify them. For example, the expression $\L(H)'\cap \L(G)$ denotes the algebra of elements of $\L(G)$ that commutes with $\{\lambda_G(h)\mid h\in H\}$. 
    
    Given a compact open subgroup $K$ of $G$, we identify $\ldeux(G/K)$ with the Hilbert subspace $\Ldeux(G/K)$ of $\Ldeux(G,\mu_G)$ consisting of right $K$-invariant vectors, and similarly for $\ldeux(K\backslash G)$. We denote by $p_K$ the \emph{left averaging projection} associated to $K$: it is the orthogonal projection onto $\Ldeux(K\setminus G)$, and it belongs to $\L(G)$. Given a pair $(G,K)$ where $K$ is a compact open subgroup of a locally compact group $G$, the \emph{Hecke von Neumann algebra} of the pair is the von Neumann algebra $p_K\L(G)p_K$. By  \cite[Theorem 4.2]{Tzanev2003Hecke}, this definition coincides with the usual one, obtained as the appropriate closure of the algebra of compactly supported continuous $K$-biinvariant functions on $G$. By \cite[Theorem 4.5]{LandstadLarsen2009Generalized}, the Hecke von Neumann algebra $p_K\L(G)p_K$ is the von Neumann algebra associated to the \emph{quasi-regular representation} $\lambda_{G/K}$, that is the von Neumann algebra $\lambda_{G/K}(G)'\subset \B(\ldeux(G/K))$. Factoriality result for Hecke von Neumann algebras can therefore be interpreted as factoriality results for quasi-regular representations.

    Finally, we recall the following classical fact about averaging projections in totally disconnected group von Neumann algebra.  For any $A\subset \lcocosets{G}$, $\xi_A\doteq \1_A/{\mu_G(A)}^{1/2}$ will denote the normalized indicator function of $A$ in $\Ldeux(G,\mu_G)$. 

    \begin{Prop}\label{lem:sot.conv.of p_K}
        Let $G$ be a totally disconnected locally compact group. Then $(p_K)_{K\in \cosub{G}}$ $\sot$-converges to the identity.
    \end{Prop}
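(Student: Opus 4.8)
The plan is to reduce the statement to a density claim, exploiting that each $p_K$ is a contraction. I would first isolate the following elementary Hilbert-space fact: if $(V_i)_{i\in I}$ is an increasing net of closed subspaces of a Hilbert space $\H$ (meaning $V_i\subseteq V_j$ whenever $i$ precedes $j$) with $\overline{\bigcup_i V_i}=\H$, then the net of orthogonal projections onto the $V_i$ converges to $\id_{\H}$ in $\sot$. Indeed, given $\xi\in\H$ and $\epsilon>0$, choose $\eta\in\bigcup_i V_i$ with $\norm{\xi-\eta}<\epsilon$, say $\eta\in V_{i_0}$; then for every $i\succeq i_0$ the projection $P_{V_i}$ fixes $\eta$, so $\norm{P_{V_i}\xi-\xi}=\norm{P_{V_i}(\xi-\eta)+(\eta-\xi)}\leq 2\norm{\xi-\eta}<2\epsilon$.

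I would then apply this with $V_K=\Ldeux(K\backslash G)$ for $K\in\cosub{G}$, the index set being directed by reverse inclusion. Since a left $L$-invariant function is a fortiori left $K$-invariant when $K\subseteq L$, we have $\Ldeux(L\backslash G)\subseteq\Ldeux(K\backslash G)$ in that case, so $(V_K)_{K\in\cosub{G}}$ is increasing in the required sense, and $p_K$ is precisely the orthogonal projection onto $V_K$. Thus everything comes down to showing that $\bigcup_{K\co G}\Ldeux(K\backslash G)$ is dense in $\Ldeux(G,\mu_G)$.

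For this I would show the union contains $\vect\{\xi_A\mid A\in\lcocosets{G}\}$, and that the latter is dense. For the inclusion: for $A=gK$ with $K\co G$, the function $\1_{gK}$ satisfies $\1_{gK}(\ell x)=\1_{gK}(x)$ for all $x\in G$ precisely when $\ell\in gKg^{-1}$, so $\xi_{gK}\in\Ldeux((gKg^{-1})\backslash G)$ with $gKg^{-1}\co G$; moreover a finite combination $\sum_i c_i\xi_{g_iK_i}$ lies in $\Ldeux(L\backslash G)$ for $L=\bigcap_i g_iK_ig_i^{-1}\co G$, so $\vect\{\xi_A\}\subseteq\bigcup_{K\co G}\Ldeux(K\backslash G)$. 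For the density of $\vect\{\xi_A\}$: continuous compactly supported functions are dense in $\Ldeux(G,\mu_G)$, and given $f\in C_c(G)$ one uses van Dantzig's theorem --- at each point of the compact set $\supp f$ pick a compact open subgroup on which $f$ varies by less than a prescribed amount, extract a finite subcover, and intersect to get a common $K\co G$ --- to approximate $f$ uniformly, on a fixed compact set, by a locally constant function, that is, by an element of $\vect\{\1_{gK}\}=\vect\{\xi_{gK}\}$; this uniform approximation on a set of finite measure is in particular an $\Ldeux$-approximation.

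Combining the three paragraphs yields the claim. The only point that is not routine bookkeeping is the density of the coset-indicator functions, and this is exactly where total disconnectedness is essential: it is what makes $C_c(G)$ approximable by locally constant functions via van Dantzig, and the proposition genuinely fails without it. Everything else is the familiar $\epsilon/3$-style manipulation made possible by $\norm{p_K}_\infty\leq 1$.
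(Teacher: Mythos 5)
Your argument is correct and follows essentially the same route as the paper: both reduce the claim to the density of the span of normalized coset indicators $\xi_{gK}$ in $\Ldeux(G,\mu_G)$, observe that $p_K$ eventually fixes each such vector, and conclude by a standard contraction estimate. You simply spell out the density step (via van Dantzig and locally constant approximation of $C_c(G)$) and the $\epsilon$-argument that the paper leaves implicit.
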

    \begin{proof}
    Consider \[ \E\doteq \vect\{\xi_{gK}\mid gK\in \lcocosets{G}\}\subset \Ldeux(G,\mu_G).\]
    Note that $\E$ is dense in $\Ldeux(G,\mu_G)$ because $G$ is totally disconnected. 
    Then, given any $\eta \in \E$, there exists $K_0\in \cosub{G}$ such that for every $K$ contained in $K_0$ we have $p_K \eta=\eta$.  Therefore, $p_K$ strongly converges to the identity as the group $K$ shrinks to $\{e\}$.
    \end{proof}

    \subsection{Fourier Coefficients on Group von Neumann Algebras}\label{subsection:group Fourier coeffs}
    
    In this section, we introduce an adapted notion of \emph{Fourier coefficients} for locally compact groups von Neumann algebras. Let $G$ be a locally compact group. 
    \begin{Def} 
        Given $x\in \L(G)$, $g\in G$ and $K$ a compact open subgroup of  $G$, the \emph{Fourier coefficient of $x$ at the coset $gK$} is the complex number
            \[\fc{x}{gK}\doteq\ps{x\xi_{K}}{\xi_{gK}}.\]
        We therefore consider $\fc{x}{}$ as a map from $\lcocosets{G}$ to $\cc$.
    \end{Def}
    
    Let us establish some basic properties of these Fourier coefficients. \Cref{lem:continuity properties of Fourier coefficients for crossed products} states their basic continuity properties, \Cref{lem:monotonicity of phi/mu} describes their behavior under the replacement of $K$ by a compact open subgroup $K'\co K$, while \Cref{lem:caracterization of scalar elements of p_KL(G)p_K by Fourier coefficients} and \Cref{lem:caracterization of scalar elements by Fourier coefficients} establish their definiteness properties when $G$ is totally disconnected.
    \begin{Prop}\label{lem:continuity properties of Fourier coefficients for crossed products} Let $x\in \L(G)$, $g\in G$ and $K$ be a compact open subgroup of $G$.
        \begin{enumerate}
            \item $\fc{x}{}:\lcocosets{G}\ni gK\mapsto \fc{x}{gK}$ is bounded by $\norm{x}_{\infty}$.
            \item $\L(G)\ni x\mapsto \fc{x}{gK} \in \cc$ is linear and weakly continuous.
        \end{enumerate}
    \end{Prop}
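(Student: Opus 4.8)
The plan is to deduce both statements directly from the Cauchy--Schwarz inequality and the defining properties of the weak operator topology, the only preliminary observation being that $\xi_K$ and $\xi_{gK}$ are \emph{unit} vectors.

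First I would record that, since $\mu_G$ is left-invariant, $\mu_G(gK)=\mu_G(K)$ and $\norm{\1_{gK}}^2=\mu_G(gK)$ in $\Ldeux(G,\mu_G)$, so that $\norm{\xi_K}=\norm{\xi_{gK}}=1$ for every coset $gK\in\lcocosets{G}$. For the first assertion this immediately yields, by Cauchy--Schwarz,
\[\abs{\fc{x}{gK}}=\abs{\ps{x\xi_K}{\xi_{gK}}}\le\norm{x}_\infty\,\norm{\xi_K}\,\norm{\xi_{gK}}=\norm{x}_\infty,\]
and since the right-hand side is independent of the coset, $\fc{x}{}$ is a bounded function on $\lcocosets{G}$ with the claimed bound.

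For the second assertion, I would fix $g$ and $K$. The map $\L(G)\ni x\mapsto\fc{x}{gK}=\ps{x\xi_K}{\xi_{gK}}$ is linear because $x\mapsto x\xi_K$ is linear on $\B(\H)$ and $\ps{\cdot}{\xi_{gK}}$ is a linear functional. Weak continuity is then essentially the definition of the weak operator topology: $\fc{\cdot}{gK}$ is a linear functional whose modulus is the seminorm $\omega_{\xi_K,\xi_{gK}}$ generating the $\wot$, hence it is $\wot$-continuous on $\B(\H)$, and a fortiori $\wot$-continuous on the von Neumann subalgebra $\L(G)$. There is no genuine obstacle here; the only point requiring (minimal) care is to keep the bound in the first item uniform in $g$ and $K$, which is precisely what the normalization $\xi_{gK}=\1_{gK}/\mu_G(gK)^{1/2}$ provides.
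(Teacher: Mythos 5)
Your proof is correct and follows exactly the paper's argument: Cauchy--Schwarz (with the normalization of $\xi_K$ and $\xi_{gK}$ as unit vectors) for the bound, and the identification of $\abs{\fc{\cdot}{gK}}$ with the seminorm $\omega_{\xi_K,\xi_{gK}}$ for the weak continuity. You merely spell out details the paper leaves implicit.
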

    \begin{proof}
        The first item is a direct consequence of the Cauchy-Schwarz inequality. The second one is a consequence of the fact that the absolute value of the map $x\mapsto \fc{x}{gK}$ coincides with $\omega_{\xi_{K},\xi_{gK}}$ which is one of the seminorms defining the weak operator topology.
    \end{proof}
    \begin{Prop}\label{lem:monotonicity of phi/mu}
        Let $x\in \L(G)$, $g\in G$ and $K, K'$ be compact open subgroups of $G$ such that $K'\leq K$. Pick $g_1,\dots, g_n$ in $gK$ such that $gK=\bigsqcup_{i=1}^ng_iK'$. Then
            \[\fc{x}{gK}=\sum_{i=1}^n\fc{x}{g_iK'}.\]
    \end{Prop}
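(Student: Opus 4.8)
The plan is to reduce the identity to the generators $\lambda_G(s)$, $s\in G$, of $\L(G)$, where it turns into a one‑line statement about cosets. Fix $g$ and the compact open subgroups $K'\leq K$. Both sides of the claimed equality are linear and weakly continuous functions of $x\in\L(G)$: the map $x\mapsto\fc{x}{gK}$ is so by \Cref{lem:continuity properties of Fourier coefficients for crossed products}, and $x\mapsto\sum_{i=1}^n\fc{x}{g_iK'}$ is a finite sum of such maps, hence again linear and weakly continuous. Since $\L(G)=\overline{\vect\{\lambda_g\mid g\in G\}}^{\wot}$, two weakly continuous functionals that agree on every $\lambda_G(s)$ agree on all of $\L(G)$, so it suffices to treat $x=\lambda_G(s)$.

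For this I would use the elementary observation that, for any $s,h\in G$ and any compact open subgroup $K_0$, one has $\lambda_G(s)\xi_{K_0}=\xi_{sK_0}$ and two cosets of $K_0$ are either equal or disjoint, so that a direct computation with indicator functions gives
\[\fc{\lambda_G(s)}{hK_0}=\ps{\xi_{sK_0}}{\xi_{hK_0}}=\frac{\mu_G(sK_0\cap hK_0)}{\mu_G(K_0)}=\begin{cases}1&\text{if }sK_0=hK_0,\\0&\text{otherwise.}\end{cases}\]
Taking $K_0=K$, $h=g$ yields $\fc{\lambda_G(s)}{gK}=\1_{gK}(s)$. Taking $K_0=K'$, $h=g_i$ and summing over $i$ yields $\sum_{i=1}^n\fc{\lambda_G(s)}{g_iK'}=\#\{i\mid sK'=g_iK'\}$; since the cosets $g_iK'$ partition $gK$ and $K'\leq K$, the coset $sK'$ coincides with one of the $g_iK'$ exactly when $s\in gK$, and then for a unique index, so this cardinal is again $\1_{gK}(s)$. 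Both sides therefore agree on all generators, and by the first paragraph on all of $\L(G)$.

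I do not anticipate a serious obstacle: the only mildly delicate points are verifying that the finite sum $x\mapsto\sum_i\fc{x}{g_iK'}$ is still weakly continuous — immediate — so that the reduction to generators is legitimate, and the coset bookkeeping that $sK'$ is one of the pieces $g_iK'$ precisely when $s\in gK$. If one prefers to avoid the generators, here is a direct alternative: set $n=[K:K']$, so that $\mu_G(gK)=\mu_G(K)=n\,\mu_G(K')$ and $\sum_{i=1}^n\xi_{g_iK'}=\sqrt n\,\xi_{gK}$, and let $q_K$ be the orthogonal projection of $\Ldeux(G,\mu_G)$ onto $\Ldeux(G/K)$; it lies in $\rho_G(G)''=\L(G)'$, being an average of the operators $\rho_G(k)$, $k\in K$ (the mirror of $p_K\in\L(G)$), and hence commutes with $x$. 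Since $\xi_{gK}$ is right $K$‑invariant, $q_K\xi_{gK}=\xi_{gK}$; and since $\1_{K'}$ is supported in the single right $K$‑coset $K$, where its average is $1/n$, one gets $q_K\xi_{K'}=\tfrac1{\sqrt n}\xi_K$. Using that $q_K$ is self‑adjoint, fixes $\xi_{gK}$, commutes with $x$, and sends $\sqrt n\,\xi_{K'}$ to $\xi_K$,
\[\sum_{i=1}^n\fc{x}{g_iK'}=\sqrt n\,\ps{x\xi_{K'}}{\xi_{gK}}=\sqrt n\,\ps{xq_K\xi_{K'}}{\xi_{gK}}=\ps{x\xi_K}{\xi_{gK}}=\fc{x}{gK}.\]
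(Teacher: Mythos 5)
Your first argument is correct and is essentially the paper's proof: reduce to $x=\lambda_G(s)$ by linearity and $\wot$-continuity of the Fourier coefficients, then verify the identity on cosets (the paper phrases the generator computation as $\frac{1}{\mu_G(K_0)}\mu_G(sK_0\cap hK_0)$ and uses additivity of $\mu_G$, which is the same bookkeeping as your $0$/$1$ count). Your alternative via the right averaging projection $q_K$ is a genuinely different and slightly slicker route: since $q_K=\frac{1}{\mu_G(K)}\int_K\rho_G(k)\,d\mu_G(k)$ lies in $\rho_G(G)''=\L(G)'$ (note $\Delta_G|_K\equiv 1$ because $K$ is compact, so the average really is the orthogonal projection onto $\Ldeux(G/K)$; equivalently $q_K=Jp_KJ$), it commutes with every $x\in\L(G)$ and the identity follows for all $x$ at once, with no appeal to density of $\vect\{\lambda_g\}$ or continuity of $x\mapsto\fc{x}{gK}$. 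What the paper's approach buys is that it transposes verbatim to the crossed-product setting (\Cref{lem:monotonicity of phi/mu for crossed products}), where the reduction to generators $u_\gamma a$ is again the natural move; what your projection argument buys is a one-line, density-free proof in the group case.
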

    \begin{proof}
      Let $\gamma\in G$: we first prove the result for $x=\lambda_{\gamma}$.
            \begin{align*}
            \sum_{i=1}^n\fc{\lambda_{\gamma}}{g_iK'}
            &=\sum_{i=1}^n\int_G\xi_{\gamma K'}(h)\xi_{g_iK'}(h)d\mu_G(h)
            =\frac{1}{\mu_G(K')}\sum_{i=1}^n\mu_G(\gamma K'\cap g_iK')\\
            &=\frac{1}{\mu_G(K')}\mu_G(\gamma K'\cap gK)
            =\frac{1}{\mu_G(K)}\mu_G(\gamma K\cap gK)
            =\fc{\lambda_{\gamma}}{gK}.
            \end{align*} 
        By linearity, the equality holds for any $x\in \vect\{\lambda_g\mid g\in G\}$. This space is $\wot$-dense in $\L(G)$; therefore, the $\wot$-continuity of Fourier coefficients conclude.
    \end{proof}

    \begin{Prop}\label{lem:caracterization of scalar elements of p_KL(G)p_K by Fourier coefficients}
        Let $x\in \L(G)$ and $K$ be a compact open subgroup of $G$. Then
            \[\fc{x}{}|_{G/K}=0\text{ if and only if }xp_K=0.\]
        As a consequence, $x$ is a scalar multiple of $p_K$ if and only if for all $g\in G\setminus K$, $\fc{x}{gK}=0$.
    \end{Prop}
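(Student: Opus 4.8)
The plan is to identify the family $(\fc{x}{gK})_{gK\in G/K}$ with the coordinates of a single vector, namely $x\xi_K$, in a fixed orthonormal basis, and then to translate the vanishing of $x\xi_K$ into the vanishing of $xp_K$ using that $\L(G)$ commutes with the right regular representation. Concretely: a one-line computation from $\rho_G(g)\xi(y)=\Delta_G(g)^{1/2}\xi(yg)$ together with $\mu_G(Kg^{-1})=\Delta_G(g)^{-1}\mu_G(K)$ gives the identity $\rho_G(g)\xi_K=\xi_{Kg^{-1}}$ for every $g\in G$; in particular $\rho_G(k)\xi_K=\xi_K$ for $k\in K$, since $\Delta_G$ is trivial on the compact subgroup $K$. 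Since left and right translations commute we have $\L(G)\subseteq\rho_G(G)'$, so $x$ commutes with every $\rho_G(k)$, $k\in K$, and hence $x\xi_K$ is right $K$-invariant, \ie $x\xi_K\in\Ldeux(G/K)$. Now $\{\xi_{gK}\mid gK\in G/K\}$ is an orthonormal basis of $\Ldeux(G/K)$ (distinct left cosets are disjoint, each of measure $\mu_G(K)$), and $\fc{x}{gK}=\ps{x\xi_K}{\xi_{gK}}$ is by definition the $gK$-coordinate of $x\xi_K$ in that basis; therefore $\fc{x}{}|_{G/K}=0$ if and only if $x\xi_K=0$, and it remains to prove that $x\xi_K=0$ if and only if $xp_K=0$.

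One direction is immediate: if $xp_K=0$ then, since $\1_K$ is left $K$-invariant, $\xi_K$ lies in the range $\Ldeux(K\backslash G)$ of $p_K$, whence $x\xi_K=xp_K\xi_K=0$. For the converse, assume $x\xi_K=0$. Using the identity above with $g$ replaced by $g^{-1}$ and commuting $x$ past $\rho_G(g^{-1})$ (again because $\L(G)\subseteq\rho_G(G)'$), we obtain $x\xi_{Kg}=x\rho_G(g^{-1})\xi_K=\rho_G(g^{-1})\,x\xi_K=0$ for every $g\in G$. As $\{\xi_{Kg}\mid Kg\in K\backslash G\}$ is an orthonormal basis of $\Ldeux(K\backslash G)=p_K\H$, the operator $x$ annihilates the range of $p_K$, \ie $xp_K=0$. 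This proves the first assertion.

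For the consequence — read for $x$ in the corner $p_K\L(G)p_K$, as the name of the proposition indicates — note first that $p_K\xi_K=\xi_K$ together with orthonormality gives $\fc{p_K}{gK}=\ps{\xi_K}{\xi_{gK}}$, equal to $1$ if $g\in K$ and $0$ otherwise; so if $x=\lambda p_K$ then $\fc{x}{gK}=0$ for all $g\in G\setminus K$. Conversely, if $\fc{x}{gK}=0$ for all $g\notin K$, put $\lambda\doteq\fc{x}{eK}$ and $y\doteq x-\lambda p_K\in p_K\L(G)p_K$; the computation just made gives $\fc{y}{}|_{G/K}=0$, so $yp_K=0$ by the first assertion, and since $y=p_Kyp_K$ we conclude $y=0$, \ie $x=\lambda p_K$. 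I do not expect a genuine difficulty: the proof is bookkeeping around averaging projections. The only points deserving care are that recovering $xp_K=0$ from $x\xi_K=0$ really uses commutation of $x$ with the \emph{whole} right regular representation $\rho_G(G)$ and not merely with $\rho_G(K)$, that the modular function must be tracked correctly in the identity $\rho_G(g)\xi_K=\xi_{Kg^{-1}}$, and that the two-sided compression $y=p_Kyp_K$ is exactly what promotes $yp_K=0$ to $y=0$ in the converse direction of the consequence.
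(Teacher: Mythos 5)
Your proof is correct and follows essentially the same route as the paper: both identify $\fc{x}{gK}$ as the coordinates of $x\xi_K$ in the orthonormal family $\{\xi_{gK}\}$, use commutation of $\L(G)$ with the right regular representation to upgrade $x\xi_K=0$ to $x|_{\Ldeux(K\backslash G)}=0$, and obtain the consequence by subtracting a scalar multiple (the paper subtracts $\fc{x}{K}\id$, you subtract $\fc{x}{K}p_K$; both have the same Fourier coefficients on $G/K$). Your reading of the ``consequence'' for $x$ in the corner $p_K\L(G)p_K$ is the right one --- the paper's own proof likewise only concludes $xp_K=\fc{x}{K}p_K$ for general $x\in\L(G)$, and the proposition is only ever applied to elements of the corner.
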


    \begin{proof}
        Let $x\in \L(G)$ such that $xp_K=0$. Then   
        \[\fc{x}{gK}=\ps{x\xi_K}{\xi_{gK}}=\ps{xp_K\xi_K}{\xi_{gK}}=0.\]
        Conversely, assume that $x$ is such that for every $g\in G$, $\fc{x}{gK}=0$. By sesquilinearity of the scalar product and density of $\vect\{\xi_A\mid A\in G/K\}$ in $\Ldeux(G/K)$, $\ps{x\xi_K}{\eta}=0$ for any $\eta\in \Ldeux(G/K)$. Therefore, $x\xi_K=0$, because $\L(G)$ stabilizes the subspace $\Ldeux(G/K)\ni \xi_K$. By applying $\rho_g$ for $g\in G$ on the last equality we get that for all $\eta\in \Ldeux(K\setminus G)$, $x\eta=0$. Therefore, because $\Ldeux(K\setminus G)$ is precisely the range of $p_K$, we get that $xp_K=0$.

        The second part of the proposition follows: it is clear that the Fourier coefficients of scalar multiples of $p_K$ are zero except maybe in cosets containing the identity. Conversely, if for every $g\notin K$, $\fc{x}{gK}=0$, then the Fourier coefficients of $x-\fc{x}{K}\id$ are zero on $G/K$, and by the previous statement, $xp_K=\fc{x}{K}p_K$ is a scalar multiple of $p_K$.
    \end{proof}
            
    \begin{Prop}\label{lem:caracterization of scalar elements by Fourier coefficients}
        Assume that $G$ is a totally disconnected locally compact group. Then, for any $x\in \L(G)$,
        \[\fc{x}{}=0\text{ if and only if }x=0.\]
        As a consequence, $x$ is scalar if and only if for every compact open subgroup $K$ of $G$ and for every $g\notin K$, $\fc{x}{gK}=0$.
    \end{Prop}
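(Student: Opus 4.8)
The plan is to bootstrap the statement from the preceding proposition, \Cref{lem:caracterization of scalar elements of p_KL(G)p_K by Fourier coefficients}, together with the fact that $(p_K)_{K\in\cosub{G}}$ converges strongly to $\Id$ (\Cref{lem:sot.conv.of p_K}); the recurring mechanism is that left multiplication by a fixed operator is $\sot$-continuous, so that $x=x\,\Id=\sot\text{-}\lim_K xp_K$ for every $x\in\L(G)$.

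For the equivalence $\fc{x}{}=0\iff x=0$, one direction is immediate from the definition. Conversely, if $\fc{x}{}$ vanishes on all of $\lcocosets{G}$, then a fortiori $\fc{x}{}|_{G/K}=0$ for each $K\in\cosub{G}$, so \Cref{lem:caracterization of scalar elements of p_KL(G)p_K by Fourier coefficients} gives $xp_K=0$ for all such $K$; letting $K$ shrink to $\{e\}$ and using the strong convergence $p_K\to\Id$ yields $x=0$.

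Next I would prove the stated consequence. The forward implication is a one-line computation: for $x=c\,\Id$ one has $\fc{x}{gK}=c\,\ps{\xi_K}{\xi_{gK}}=c\,\mu_G(K\cap gK)/\mu_G(K)$, which vanishes when $g\notin K$ because then the cosets $K$ and $gK$ are disjoint. For the converse, assume $\fc{x}{gK}=0$ whenever $K\in\cosub{G}$ and $g\notin K$. Applying \Cref{lem:caracterization of scalar elements of p_KL(G)p_K by Fourier coefficients} (more precisely, its proof) to each $K$ separately gives $xp_K=\fc{x}{K}\,p_K$. The point is then to check that the scalar $\fc{x}{K}$ is independent of $K$: if $K'\leq K$, decompose $K=\bigsqcup_{i=1}^n g_iK'$ with $g_1=e$, so that \Cref{lem:monotonicity of phi/mu} gives $\fc{x}{K}=\sum_{i=1}^n\fc{x}{g_iK'}=\fc{x}{K'}$, since for $i\geq 2$ we have $g_i\notin K'$ and the corresponding term vanishes by assumption. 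As any two compact open subgroups contain their intersection, which is again compact open, $\fc{x}{K}$ equals a constant $c$. Hence $xp_K=c\,p_K$ for every $K\in\cosub{G}$, and passing to the strong limit gives $x=c\,\Id$, as desired.

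The argument is essentially routine once the earlier propositions are in hand; the only step requiring a genuine (if minor) verification is the $K$-independence of $\fc{x}{K}$, which is precisely where the additivity property \Cref{lem:monotonicity of phi/mu} enters. As an alternative to that computation, one may instead observe that \Cref{lem:caracterization of scalar elements of p_KL(G)p_K by Fourier coefficients} already yields $xp_K=c_K p_K$ for each $K$, and then use $p_{K'}\geq p_K$ for $K'\leq K$ (left-$K$-invariant vectors being left-$K'$-invariant) to deduce $c_K=c_{K'}$, concluding again via $p_K\to\Id$.
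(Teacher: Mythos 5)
Your proof is correct. The first half (vanishing Fourier coefficients imply $x=0$) is exactly the paper's argument: reduce to $xp_K=0$ via \Cref{lem:caracterization of scalar elements of p_KL(G)p_K by Fourier coefficients} and conclude with the strong convergence $p_K\to\Id$ from \Cref{lem:sot.conv.of p_K}.

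For the second half you diverge slightly from the paper in how the scalar is identified. Both you and the paper first obtain $xp_K=\fc{x}{K}\,p_K$ for every $K\in\cosub{G}$. The paper then argues by compactness: the net $(\fc{x}{K})_K$ is bounded, so it has an accumulation point $\alpha$, and $\alpha\cdot\Id$ is an $\sot$-accumulation point of the net $(xp_K)_K$, which already $\sot$-converges to $x$; hence $x=\alpha\cdot\Id$. You instead show directly that the net of scalars is \emph{constant}, either via the additivity of Fourier coefficients (\Cref{lem:monotonicity of phi/mu}, using that the terms $\fc{x}{g_iK'}$ with $g_i\notin K'$ vanish by hypothesis) or via the ordering $p_{K'}\geq p_K$ for $K'\leq K$, and then pass to the strong limit. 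Your route is marginally more elementary in that it avoids subnets and accumulation points entirely, at the cost of one extra (easy) verification; the paper's route needs only the boundedness of the coefficients, which it already recorded in \Cref{lem:continuity properties of Fourier coefficients for crossed products}. Both arguments are complete and correct; your explicit check of the forward implication of the ``consequence'' (that scalars have vanishing off-diagonal coefficients) is also welcome, since the paper leaves it to the reader here.
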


    \begin{proof}
        Assume that every Fourier coefficient of $x$ is zero. It follows from \Cref{lem:caracterization of scalar elements of p_KL(G)p_K by Fourier coefficients} that for any $K\co G$, $xp_K$ is zero.
        On the other hand, by \Cref{lem:sot.conv.of p_K}, $xp_K$ $\sot$-converges to $x$ as $K$ shrinks to the identity. Therefore, $x$ is zero. The other direction is trivial.

        For the second part, assume that $x$ is such that for all $K\co G$, $\fc{x}{gK}=0$ when $g\notin K$. The previous lemma shows that for all $K\co G$, $xp_K=\fc{x}{K}p_K$. The net $(xp_K)_{K\in \cosub{G}}$ $\sot$-converges to $x$, thus its norm is bounded. Therefore, $(\fc{x}{K})_{K\in \cosub{G}}$ itself is bounded and admits an accumulation point $\alpha\in \cc$. Then $\alpha\cdot\id$ is an $\sot$-accumulation point of $(\fc{x}{K}p_K)_{K\in \cosub{G}}$. As this net $\sot$-converges to $x$, we get that $x$ is a scalar multiple of the identity.  
    \end{proof}

    \subsection{Factoriality Criterion}\label{subsection: group factoriality}
    We can now establish a locally compact adapted version of the inequality used in the discrete case to show that infinite conjugacy class groups have factorial group von Neumann algebras. Given $G$ a locally compact group, $K$ a compact open subgroup of $G$ and $H$ a closed subgroup of $G$, $\nmlz{G}{K}\doteq \{g\in G\mid gKg^{-1}=K\}$ is the normalizer of $K$ in $G$ and we set $\nmlz{H}{K}\doteq H\cap \nmlz{G}{K}$. The group $\nmlz{H}{K}$ acts by conjugation on $G/K$.  

    \begin{Lem}\label{lem:hilbert inequality}
        Let $H$ be a closed subgroup of a locally compact group $G$, and $z\in \L(H)'\cap\L(G)$. If $H\leq\ker \Delta_G$, then for every  $g\in G$ and for every compact open subgroup $K$ of $G$,
            \[\norm{z}_{\infty}^2\geq \ccc{H}{gK}\abs{\fc{z}{gK}}^2, \]
        where $\ccc{H}{gK}\in \nn^*\cup \{\infty\}$ is the cardinal of the orbit of $gK$ for the conjugation action of $\nmlz{H}{K}$ on $G/K$.
    \end{Lem}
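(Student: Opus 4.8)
The plan is to adapt to the locally compact setting the classical argument showing that a central element of $\L(G)$ for an ICC discrete group is a scalar, rephrasing everything in terms of the Fourier coefficients $\fc{\cdot}{\cdot}$ and of the subgroup $H$. The crucial point is to realise the conjugation action of $\nmlz{H}{K}$ on $G/K$ by unitary operators that commute with $z$.

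For $h\in\nmlz{H}{K}$ I would consider the unitary $u_h\doteq\lambda_h\rho_h$. Since $h\in H\leq\ker\Delta_G$ one has $\Delta_G(h)=1$, so unwinding the definitions of $\lambda_G$ and $\rho_G$ gives $\rho_h\xi_{gK}=\xi_{gKh^{-1}}$ (using $\mu_G(gKh^{-1})=\mu_G(K)$), and then, using $hKh^{-1}=K$, that $\lambda_h\xi_{gKh^{-1}}=\xi_{hgh^{-1}K}$. Altogether $u_h\xi_{gK}=\xi_{hgh^{-1}K}$ for all $g\in G$, and in particular $u_h\xi_K=\xi_K$. This short computation is the only place where both hypotheses on $H$ intervene, the condition $h\in\ker\Delta_G$ being exactly what makes the modular factors disappear.

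Next, since $\L(H)=\lambda_G(H)''$ and $\L(G)=\rho_G(G)'$, the assumption $z\in\L(H)'\cap\L(G)$ says precisely that $z$ lies in $\lambda_G(H)'\cap\rho_G(G)'$; as $\lambda_G(H)$ and $\rho_G(G)$ are groups of unitaries, $z$ commutes with $u_h$ and with $u_h^*$. Using $u_h^*\xi_K=\xi_K$, I would then compute, for every $h\in\nmlz{H}{K}$,
\[ \fc{z}{hgh^{-1}K}=\ps{z\xi_K}{u_h\xi_{gK}}=\ps{u_h^*z\xi_K}{\xi_{gK}}=\ps{zu_h^*\xi_K}{\xi_{gK}}=\ps{z\xi_K}{\xi_{gK}}=\fc{z}{gK}, \]
so that $\fc{z}{\cdot}$ is constant on each orbit of the conjugation action of $\nmlz{H}{K}$ on $G/K$.

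Finally I would conclude with Bessel's inequality. Picking any $n$ distinct cosets $g_1K,\dots,g_nK$ in the orbit of $gK$, the vectors $\xi_{g_iK}$ form an orthonormal family (distinct cosets of $K$ are disjoint and all have measure $\mu_G(K)$), hence
\[ \sum_{i=1}^n\abs{\fc{z}{g_iK}}^2=\sum_{i=1}^n\abs{\ps{z\xi_K}{\xi_{g_iK}}}^2\leq\norm{z\xi_K}^2\leq\norm{z}_{\infty}^2 .\]
By the previous step every summand equals $\abs{\fc{z}{gK}}^2$, so $n\abs{\fc{z}{gK}}^2\leq\norm{z}_{\infty}^2$ for all $n\leq\ccc{H}{gK}$; taking the supremum over such $n$ gives the claimed inequality. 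I do not expect a real obstacle here: the computation defining $u_h$ is routine bookkeeping with left and right translations, and the only non-elementary input is the identity $\L(G)=\rho_G(G)'$ recalled in the preliminaries, needed precisely so that $z$ commutes with the right translations $\rho_h$.
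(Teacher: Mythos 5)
Your proposal is correct and follows essentially the same route as the paper: the paper likewise realises the conjugated cosets as $\xi_{h(gK)h^{-1}}=\lambda_h\rho_h\,\xi_{gK}$ for $h\in\nmlz{H}{K}$, uses $z\in\lambda_G(H)'\cap\rho_G(G)'$ to move these unitaries onto $\xi_K$ (where they act trivially since $h$ normalizes $K$ and $\Delta_G(h)=1$), and concludes by Bessel's inequality over the orthonormal family indexed by the orbit; your only cosmetic difference is proving the orbit-invariance of $\fc{z}{\cdot}$ first and summing afterwards.
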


    \begin{proof}
        Given $gK\in G/K$, denote by $\Ccc{H}{gK}$ the orbit of $gK$ under the conjugation action of $\nmlz{H}{K}$ on $G/K$.

         The family $(\xi_{B})_{B\in \Ccc{H}{gK}}\in \Ldeux(G,\mu_G)$ is orthonormal. Given any $B\in\Ccc{H}{gK}$, let $h_B\in \nmlz{H}{K}$ such that $B=h_B(gK)h_B^{-1}$. Then
        \begin{align*}
            \norm{z}_{\infty}^2&
            \geq \norm{z\xi_{K}}_2^2
            \geq \sum_{B\in \Ccc{H}{gK}}\abs{\ps{z\xi_{K}}{\xi_{B}}}^2
            = \sum_{B\in \Ccc{H}{gK}}\abs{\ps{z\xi_{K}}{\xi_{h_B(gK)h_B^{-1}}}}^2\\
            &= \sum_{B\in \Ccc{H}{gK}}\abs{\ps{z\xi_{K}}{\lambda_{h_B}\rho_{h_B}\xi_{gK}}}^2
            = \sum_{B\in \Ccc{H}{gK}}\abs{\ps{z\rho_{h_{B}^{-1}}\lambda_{h_{B}^{-1}}\xi_{K}}{\xi_{gK}}}^2\tag{because $z\in \L(H)'$}\\
            &= \sum_{B\in \Ccc{H}{gK}}\abs{\ps{z\xi_{h_{B}^{-1}Kh_{B}}}{\xi_{gK}}}^2
            = \sum_{B\in \Ccc{H}{gK}}\abs{\ps{z\xi_{K}}{\xi_{gK}}}^2\\
            &= \ccc{H}{gK}\abs{\fc{z}{gK}}^2.
        \end{align*}
   \end{proof}

   \Cref{lem:hilbert inequality} implies the following factoriality criterion for Hecke von Neumann algebras.

\begin{Prop}\label{prop:factoriality for corners}
    Let $K$ be a compact open subgroup of a locally compact group $G$ and $H$ be a closed subgroup of $\nmlz{G}{K}\cap \ker \Delta_G$. If for any $g\notin K$, $\ccc{H}{gK}=\infty$, then 
    \[(p_K\L(H))'\cap p_K\L(G)p_K=\cc p_K.\]
    In particular, $p_K\L(G)p_K$ is a factor. 
\end{Prop}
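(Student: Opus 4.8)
The plan is to take an arbitrary element $z$ of the relative commutant $(p_K\L(H))'\cap p_K\L(G)p_K$ and show $z\in\cc p_K$, by feeding it into \Cref{lem:hilbert inequality} applied at the single compact open subgroup $K$ and then invoking the definiteness statement of \Cref{lem:caracterization of scalar elements of p_KL(G)p_K by Fourier coefficients}. Before that, I would record the compatibility between $p_K$ and $\L(H)$: since $H\leq\nmlz{G}{K}$, for every $h\in H$ one has $\lambda_h p_K\lambda_h^{-1}=p_{hKh^{-1}}=p_K$, so $\lambda_h$ commutes with $p_K$; hence $p_K\lambda_h=p_K\lambda_h p_K$, and by linearity and $\wot$-continuity $p_K\L(H)=p_K\L(H)p_K\subseteq p_K\L(G)p_K$, so the relative commutant in the statement makes sense inside $p_K\L(G)p_K$.

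Next I would upgrade the commutation. Let $z\in (p_K\L(H))'\cap p_K\L(G)p_K$, so that $z=p_Kz=zp_K$. For each $h\in H$, the relation $z(p_K\lambda_h)=(p_K\lambda_h)z$ rewrites, using $zp_K=z$, $p_Kz=z$ and $p_K\lambda_h=\lambda_h p_K$, as $z\lambda_h=\lambda_h z$. Therefore $z\in\L(H)'\cap\L(G)$, which is exactly the hypothesis needed for \Cref{lem:hilbert inequality} since moreover $H\leq\nmlz{G}{K}\cap\ker\Delta_G\subseteq\ker\Delta_G$.

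Applying \Cref{lem:hilbert inequality} to $z$ with the subgroup $K$, for every $g\in G$ we get $\norm{z}_{\infty}^2\geq \ccc{H}{gK}\,\abs{\fc{z}{gK}}^2$. By hypothesis $\ccc{H}{gK}=\infty$ for every $g\notin K$, while $\norm{z}_{\infty}<\infty$, forcing $\fc{z}{gK}=0$ for all $g\in G\setminus K$. By the second part of \Cref{lem:caracterization of scalar elements of p_KL(G)p_K by Fourier coefficients}, $z$ is then a scalar multiple of $p_K$, which gives $(p_K\L(H))'\cap p_K\L(G)p_K=\cc p_K$. Factoriality of $p_K\L(G)p_K$ follows at once: its center, namely $(p_K\L(G)p_K)'\cap p_K\L(G)p_K$ taken in $\B(p_K\Ldeux(G,\mu_G))$ with unit $p_K$, is contained in $(p_K\L(H))'\cap p_K\L(G)p_K=\cc p_K$, hence equals $\cc p_K$.

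The only genuinely delicate point is the bookkeeping of the second paragraph: correctly exploiting that $z$ is a corner element ($z=p_Kzp_K$) together with the commutation $p_K\lambda_h=\lambda_h p_K$ to pass from "$z$ commutes with $p_K\L(H)$" to "$z$ commutes with all of $\L(H)$". Everything else is a direct citation of \Cref{lem:hilbert inequality} and \Cref{lem:caracterization of scalar elements of p_KL(G)p_K by Fourier coefficients}.
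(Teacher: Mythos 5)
Your proof is correct and follows essentially the same route as the paper's: observe that $H\leq\nmlz{G}{K}$ forces $p_K$ to commute with $\lambda_G(H)$, upgrade $z\in(p_K\L(H))'$ to $z\in\L(H)'\cap\L(G)$, apply \Cref{lem:hilbert inequality} at the single subgroup $K$ to kill the Fourier coefficients $\fc{z}{gK}$ for $g\notin K$, and conclude via \Cref{lem:caracterization of scalar elements of p_KL(G)p_K by Fourier coefficients} (using $z=zp_K$). The only difference is that you spell out the corner bookkeeping ($z=p_Kzp_K$ together with $p_K\lambda_h=\lambda_hp_K$) that the paper leaves implicit, which is a welcome clarification rather than a deviation.
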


\begin{proof}
 Let $z\in (p_K\L(H))'\cap p_K\L(G)p_K$. As $H\leq \nmlz{G}{K}$, $p_K\in \L(H)'\cap \L(G)$. Therefore, $z\in \L(H)'\cap \L(G)$. By \Cref{lem:hilbert inequality}, $\ccc{H}{gK}$ being infinite for all $g\notin K$, it follows that $\fc{z}{gK}=0$ for every $g\notin K$. By \Cref{lem:caracterization of scalar elements of p_KL(G)p_K by Fourier coefficients}, $z\in \cc p_K$.
\end{proof}
We can now prove our main factoriality criterion. 

\begin{The}[\Cref{mainthm:irreducible inclusion for groups with etoile_H}]\label{the:irreducible inclusion for groups with etoile_H}
      Let $G$ be a totally disconnected locally compact group endowed with a left Haar measure $\mu_G$ and let $\Delta_G:G\to \rr^*_+$ be the modular function of $G$. If $H$ is a closed subgroup of $G$ with the following properties
    \begin{equation*} \label{eq:etoileH}\tag{$\hspace*{1mm}\star_{H}$}
        \left\{
        \begin{array}{l}
        \text{1. }H \leq \ker \Delta_G \\
          \text{2. }\text{There exists a basis at the identity of }G\text{ in compact open subgroups } (K_n)_{n \in \mathbb{N}} \\
          \text{ such that for every nontrivial } g \in G,\ \limsup_{n \in \mathbb{N}} \ccc{H}{gK_n} \mu_G(K_n)^2 = \infty,
        \end{array}
        \right.
      \end{equation*}
    then
    \[ \L(H)'\cap \L(G)=\cc.\]
\end{The}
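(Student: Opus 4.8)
The plan is to show that every $z\in\L(H)'\cap\L(G)$ is a scalar. By the characterization of scalar elements in \Cref{lem:caracterization of scalar elements by Fourier coefficients}, it suffices to prove that $\fc{z}{gK}=0$ for every compact open subgroup $K\le G$ and every $g\notin K$. So I argue by contradiction: suppose there are a compact open subgroup $K_*\le G$ and an element $g_*\notin K_*$ with $c\doteq\fc{z}{g_*K_*}\neq 0$. Let $(K_n)_{n\in\nn}$ be a decreasing basis at the identity in compact open subgroups witnessing \eqref{eq:etoileH}, and fix $N$ with $K_N\le K_*$, so that $K_n\le K_*$ for all $n\ge N$.

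The core of the argument is a \emph{descent} down the chain $(K_n)$, relying on the additivity of Fourier coefficients along coset refinements (\Cref{lem:monotonicity of phi/mu}). I claim one can build a decreasing chain of cosets $B_N\supseteq B_{N+1}\supseteq\cdots$ with $B_n\in G/K_n$, $B_n\subseteq g_*K_*$, and
\[
\abs{\fc{z}{B_n}}\ \ge\ \abs{c}\,\frac{\mu_G(K_n)}{\mu_G(K_*)}\qquad\text{for all }n\ge N.
\]
For $n=N$: the coset $g_*K_*$ splits as a disjoint union of $\mu_G(K_*)/\mu_G(K_N)$ cosets of $K_N$, and by \Cref{lem:monotonicity of phi/mu} the number $c$ is the sum of the corresponding Fourier coefficients; taking $B_N$ to be a summand of maximal modulus gives the estimate. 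For the inductive step, write $B_n$ as a disjoint union of $\mu_G(K_n)/\mu_G(K_{n+1})$ cosets of $K_{n+1}$, so that (again by \Cref{lem:monotonicity of phi/mu}) $\fc{z}{B_n}$ is the sum of their Fourier coefficients; choosing $B_{n+1}$ among them of maximal modulus yields the bound at level $n+1$.

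Since $\bigcap_{n\ge N}K_n=\{e\}$, the nested nonempty compact sets $B_n$ have a single common point $h$ (if $x,y\in\bigcap_n B_n$ then $x^{-1}y\in\bigcap_n K_n=\{e\}$), and consequently $B_n=hK_n$ for every $n\ge N$. Moreover $h$ is nontrivial: $h\in B_N\subseteq g_*K_*$ and $g_*K_*\cap K_*=\emptyset$ because $g_*\notin K_*$, so $h\notin K_*$, and for the same reason $h\notin K_n$ for all $n\ge N$. Now I apply \Cref{lem:hilbert inequality} — this is where the first condition of \eqref{eq:etoileH}, namely $H\le\ker\Delta_G$, enters — to the element $z$, the subgroup $K_n$ and the coset $hK_n$: for every $n\ge N$,
\[
\norm{z}_{\infty}^2\ \ge\ \ccc{H}{hK_n}\,\abs{\fc{z}{hK_n}}^2\ \ge\ \ccc{H}{hK_n}\,\abs{c}^2\,\frac{\mu_G(K_n)^2}{\mu_G(K_*)^2}.
\]
Hence $\ccc{H}{hK_n}\,\mu_G(K_n)^2\le \mu_G(K_*)^2\,\norm{z}_{\infty}^2/\abs{c}^2$ stays bounded as $n\to\infty$, contradicting the second condition of \eqref{eq:etoileH} applied to the nontrivial element $h$, which asserts $\limsup_{n}\ccc{H}{hK_n}\,\mu_G(K_n)^2=\infty$. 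This contradiction shows $z$ is scalar, i.e. $\L(H)'\cap\L(G)=\cc$.

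The step I expect to be the real obstacle is precisely the one the descent addresses: \Cref{lem:hilbert inequality} is a statement \emph{at a single scale} $K$, whereas \eqref{eq:etoileH} is a statement about the \emph{whole sequence} $(K_n)$ and one fixed group element. The descent converts a single nonvanishing Fourier coefficient at scale $K_*$ into a coherent family of nonvanishing coefficients of size at least a constant times $\mu_G(K_n)$ at all finer scales, all attached to one fixed nontrivial $h$; it is crucial that the cosets be chosen nested, so that their intersection pins down $h$, and this is exactly what the additivity of \Cref{lem:monotonicity of phi/mu} permits. (One should read \eqref{eq:etoileH} with $(K_n)$ a decreasing sequence of trivial intersection; when $G$ is discrete one may simply take $K_n=\{e\}$, and the argument degenerates into the classical proof that groups with infinite conjugacy classes have factorial group von Neumann algebra.)
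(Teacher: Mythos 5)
Your proposal is correct and follows essentially the same route as the paper: a descent along the chain $(K_n)$ using the additivity of Fourier coefficients (\Cref{lem:monotonicity of phi/mu}) to produce nested cosets carrying coefficients of size at least a constant times $\mu_G(K_n)$, identification of the common point $h\neq e$ of the nested cosets, and then \Cref{lem:hilbert inequality} against condition 2 of \eqref{eq:etoileH} to reach a contradiction, concluding via \Cref{lem:caracterization of scalar elements by Fourier coefficients}. Your explicit ``maximal modulus'' selection is exactly the paper's averaging inequality $\abs{\fc{x}{gK}}/\mu_G(K)\le\max_i\abs{\fc{x}{g_iK'}}/\mu_G(K')$, so the two arguments coincide.
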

\begin{proof}
    We first establish the following consequence of \Cref{lem:monotonicity of phi/mu}: given $K'\co K\co G$, $x\in \L(G)$ and $g\in G$, there exists $g'\in gK$ such that 
        \(\frac{\abs{\fc{x}{g'K'}}}{\mu_G(K')}\geq \frac{\abs{\fc{x}{gK}}}{\mu_G(K)}.\)
    Indeed, if $gK=\bigsqcup_{i=1^n}g_iK'$:
        \[\frac{\abs{\phi^x(gK)}}{\mu_G(K)}
        =\abs{ \sum_{i=1}^n \frac{\phi^x(g_iK')}{\mu_G(K)}}
        =\frac{1}{n}\abs{ \sum_{i=1}^n \frac{\phi^x(g_iK')}{\mu_G(K')}}
        \leq \frac{1}{n}\sum_{i=1}^n \frac{\abs{\phi^x(g_iK')}}{\mu_G(K')}
        \leq \max \frac{\abs{\phi^x(g_iK')}}{\mu_G(K')}.\]

    We now proceed to the proof. Let $z\in \L(G)\cap \L(H)'$. Assume by contradiction that there exists $K\co G$ and $g\notin K$ such that $\fc{z}{gK}\neq 0$. Consider $(K_n)_{n\in \nn}$ a basis at the identity in compact open subgroups with respect to which $G$ has \eqref{eq:etoileH}. Using repeatedly the inequality established above, we obtain $n_0\in \nn$ and a sequence $(g_n)_{n\geq n_0}$ such that $g_{n_0}K_{n_0}\subset gK$ and for every $n\geq n_0$, we have the inclusion
    $g_{n+1}K_{n+1}\subset g_{n}K_n$ and the inequality $\frac{\abs{\fc{z}{g_{n+1}K_{n+1}}}}{\mu_G(K_{n+1})}\geq \frac{\abs{\fc{z}{gK}}}{\mu_G(K)}$. The sequence $(g_n)_{n\geq n_0}$ converges. Let $h$ be its limit: it belongs to $gK$ and is therefore in $G\setminus\{e\}$.  \Cref{lem:hilbert inequality} gives 
    \[ \norm{z}_{\infty}^2
    \geq \ccc{H}{hK_n}\abs{\fc{z}{hK_n}}^2
    =\ccc{H}{hK_n}\abs{\fc{z}{g_nK_n}}^2
    \geq \ccc{H}{hK_n}\mu_G(K_n)^2\frac{\abs{\fc{z}{gK}}^2}{\mu_G(K)^2}.
    \]

    Therefore, $\limsup \ccc{H}{hK_n}\mu_G(K_n)^2=\infty$, and thus $\fc{z}{gK}=0$, leading to a contradiction. By \Cref{lem:caracterization of scalar elements by Fourier coefficients}, $z$ is scalar.
\end{proof}
Note that whenever $H\leq H'\leq \ker \Delta_G$, property $\etoile{H}$ implies property $\etoile{H'}$. Furthermore, if $G$ has property $\etoile{H}$ with respect to an \emph{open} subgroup $H$, and if $G'$ is such that $H\leq G'\leq G$, then $G'$ also has property $\etoile{H}$.

As a first corollary, we obtain a slightly generalized version of the von Neumann algebraic version of \cite{Suzuki2022Csimplicity}:
\begin{Cor}\label{cor:generalizedSuzukicriterion}
    Let $G$ be a totally disconnected group. Assume that there exists a basis at the identity in compact open neighborhoods $(K_n)_{n\in \nn}$ such that for all $g\in G\setminus\{e\}$, there is arbitrary large $n\in \nn$ such that $\ccc{G}{gK_n}=\infty$. Then $\L(G)$ is a factor. 
\end{Cor}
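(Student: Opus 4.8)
The plan is to deduce this from \Cref{the:irreducible inclusion for groups with etoile_H}. The only subtlety is that the corollary imposes no unimodularity hypothesis on $G$, whereas property $\etoile{H}$ requires $H\leq\ker\Delta_G$; so rather than applying the theorem with $H=G$ I would apply it with the closed subgroup $H=\ker\Delta_G$.

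The first step is the elementary remark that $\nmlz{G}{K}\subseteq\ker\Delta_G$ for every compact open subgroup $K\co G$. Indeed, if $\gamma\in G$ normalizes $K$ then $\mu_G(K)=\mu_G(\gamma K\gamma^{-1})=\Delta_G(\gamma^{-1})\mu_G(\gamma K)=\Delta_G(\gamma)^{-1}\mu_G(K)$, and since $K$ is compact and open we have $0<\mu_G(K)<\infty$, forcing $\Delta_G(\gamma)=1$. Consequently $\nmlz{\ker\Delta_G}{K}=\ker\Delta_G\cap\nmlz{G}{K}=\nmlz{G}{K}$, so the two conjugation actions agree and $\ccc{\ker\Delta_G}{gK}=\ccc{G}{gK}$ for every $g\in G$ and every $K\co G$.

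Next I would check that $\ker\Delta_G$ has property $\etoile{\ker\Delta_G}$ relative to the basis $(K_n)_{n\in\nn}$ supplied by the hypothesis. Condition $1$ is automatic. For condition $2$, fix $g\in G\setminus\{e\}$: by assumption there are arbitrarily large $n$ with $\ccc{G}{gK_n}=\infty$, hence, by the first step, with $\ccc{\ker\Delta_G}{gK_n}=\infty$; since each $K_n$ is open, $\mu_G(K_n)>0$, so $\ccc{\ker\Delta_G}{gK_n}\mu_G(K_n)^2=\infty$ for these $n$ and therefore $\limsup_{n}\ccc{\ker\Delta_G}{gK_n}\mu_G(K_n)^2=\infty$.

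Then \Cref{the:irreducible inclusion for groups with etoile_H} applied to the closed subgroup $H=\ker\Delta_G$ gives $\L(\ker\Delta_G)'\cap\L(G)=\cc$. Since $\ker\Delta_G\leq G$, one has $\L(\ker\Delta_G)\subseteq\L(G)$, hence $\L(G)'\subseteq\L(\ker\Delta_G)'$, so the center $\L(G)'\cap\L(G)$ is contained in $\L(\ker\Delta_G)'\cap\L(G)=\cc$, and since it obviously contains $\cc$, it is trivial, \ie $\L(G)$ is a factor. I expect no genuine obstacle: the entire content is the decision to route the argument through $\ker\Delta_G$ instead of $G$, which is made possible — and necessary when $G$ fails to be unimodular — by the observation of the first step that any element normalizing a compact open subgroup already lies in $\ker\Delta_G$.
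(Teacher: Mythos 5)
Your proof is correct and is essentially the derivation the paper intends: the corollary is stated without proof as a direct consequence of \Cref{the:irreducible inclusion for groups with etoile_H}, and your argument is exactly that application. Your preliminary observation that $\nmlz{G}{K}\subseteq\ker\Delta_G$ for every $K\co G$ --- so that $\ccc{G}{gK}=\ccc{\ker\Delta_G}{gK}$ and the theorem may be invoked with $H=\ker\Delta_G$ rather than $H=G$ --- is precisely the point that makes the statement valid without any unimodularity hypothesis, and it is a worthwhile detail to make explicit since the paper leaves it implicit.
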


We now determine the type of the factors obtained \Cref{the:irreducible inclusion for groups with etoile_H}. 

\begin{Cor}[\Cref{maincor:cor:non typeI factors}]\label{cor:non typeI factors}
    Let $G$ be a locally compact group. If $G$ has property $\etoile{H}$ with respect to a closed proper subgroup $H$, then both $\L(G)$ and $\L(H)$ are non-type $\I$ factors.
\end{Cor}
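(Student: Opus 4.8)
The plan is to combine the irreducible inclusion $\L(H)'\cap\L(G)=\cc$ from \Cref{the:irreducible inclusion for groups with etoile_H} with a dimension/size argument showing that a proper closed subgroup cannot give a type $\I$ factor in this rigid a way. First, $\L(G)$ and $\L(H)$ are factors: indeed property $\etoile{H}$ for $G$ implies $\L(H)'\cap\L(G)=\cc$, and since $\Z(\L(G))\subset\L(H)'\cap\L(G)$ we get that $\L(G)$ is a factor; moreover property $\etoile{H}$ also holds for $G=H$ replacing $H$ by $H$ itself (the hypotheses only involve $H$, and a basis of compact open subgroups of $G$ contains, by intersection with $H$, a basis of compact open subgroups of $H$, with $\ccc{H}{gK\cap H}$ controlling the relevant orbit sizes), so $\L(H)$ is a factor too. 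The key point to exploit is that the inclusion $\L(H)\subset\L(G)$ is \emph{irreducible}: no nontrivial element of $\L(G)$ commutes with all of $\L(H)$.

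Next I would argue by contradiction: suppose $\L(G)$ is of type $\I$, hence (being a factor) isomorphic to some $\B(\H_n)$ with $n\in\nn^*\cup\{\infty\}$. Two sub-cases. If $G$ is non-discrete, then $G$ contains a nontrivial compact open subgroup $K$, and by unimodularity ($G=\ker\Delta_G$ is forced when property $\etoile{H}$ holds with $H\leq\ker\Delta_G$ and $G$ has $\etoile{H}$... actually one must be slightly careful here) the Plancherel weight on $\L(G)$ is a faithful normal semifinite trace; a type $\I$ factor carrying such a trace is $\B(\H_n)$ with its standard trace, which is finite only if $n<\infty$, i.e. $\L(G)$ is finite-dimensional — but $\L(G)\supset\vect\{\lambda_g\mid g\in G\}$ is infinite-dimensional since $G$ is infinite. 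So $\L(G)$ is of type $\I_\infty$, and then $\L(H)\subset\L(G)=\B(\H_\infty)$ is an irreducible inclusion; but an irreducible von Neumann subalgebra of $\B(\H)$ containing the identity and strictly smaller should force $\L(H)$ itself to be $\B(\H)$ (a type $\I_\infty$ factor acting irreducibly on $\H$ is all of $\B(\H)$ when it has a minimal projection of rank one — this needs that the minimal projections of $\L(H)$ are rank-one in $\B(\H)$, which follows from $\L(H)'\cap\L(G)=\cc$ forcing $\L(H)'$ inside $\B(\H)$ to be as small as possible). Then $\L(H)=\L(G)$, contradicting properness of $H$ via the following rigidity: if $H\neq G$ is closed, pick $g\in G\setminus H$; then $\lambda_g\notin\lambda_G(H)''=\L(H)$ because the Plancherel weight distinguishes them (the Fourier-type coefficient $\fc{\lambda_g}{gK}\neq 0$ while every element of $\L(H)$ supported ``near $H$'' has vanishing coefficient off $H$ — this is exactly the kind of computation the earlier Fourier-coefficient lemmas support).

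I would then handle the discrete case separately and more directly: if $G$ is discrete, property $\etoile{H}$ with the $K_n$ eventually equal to $\{e\}$ says $\ccc{H}{g}=\infty$ for all $g\neq e$, i.e. every nontrivial $\nmlz{H}{\{e\}}=H$-conjugacy orbit in $G$ is infinite; in particular $G$ has infinite conjugacy classes (taking $H$'s action, even a proper $H$ forces infinitude of $H$-orbits hence of full conjugacy classes), so $\L(G)$ is a $\II_1$ factor by the classical Murray–von Neumann result, hence not type $\I$; and $\L(H)$ is likewise $\II_1$ or $\II_\infty$, not type $\I$.

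\textbf{Main obstacle.} The delicate step is the structural claim in the non-discrete case that an irreducible inclusion $\L(H)\subset\L(G)$ with $\L(G)$ of type $\I_\infty$ forces $\L(H)=\L(G)$, together with ensuring the hypotheses transfer from ``$G$ has $\etoile{H}$'' to ``$H$ has $\etoile{H}$'' so that $\L(H)$ is genuinely a factor. I expect the cleanest route is not the direct commutant manipulation sketched above but rather: show $\L(G)$ of type $\I$ would make the Plancherel trace on $\L(G)$ finite (using unimodularity of $G$, which holds since $H\leq\ker\Delta_G$ is a proper subgroup and one shows $G$ must be unimodular — or simply add unimodularity to the standing hypotheses as the earlier text does for $\Odk,\Ndk$), forcing $G$ discrete; then invoke the discrete case. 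In other words, the real content is: \emph{property $\etoile{H}$ with $H$ proper, together with $G$ being type $\I$, is incompatible because it would force $G$ discrete with i.c.c., which is a $\II_1$ situation} — so the proof reduces to the trace/finiteness dichotomy for type $\I$ factors plus the classical discrete statement, and the one thing to verify carefully is that when $G$ is non-discrete and unimodular, $\L(G)$ type $\I$ is impossible because the semifinite trace on $\B(\H_\infty)$ together with $\phi_G$ being non-finite (as $G$ is non-discrete) is consistent, so one genuinely needs the irreducibility of $\L(H)\subset\L(G)$ and not merely factoriality — bringing us back to the commutant argument, which is therefore the crux to nail down.
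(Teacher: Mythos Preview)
Your proposal circles around the right idea---an irreducible inclusion into a type $\I$ factor should collapse---but the argument is tangled and has genuine gaps.

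First, your attempt to show that $\L(H)$ is a factor by transferring property $\etoile{H}$ to the pair $(H,H)$ is both unjustified (the sketch via ``$\ccc{H}{gK\cap H}$'' does not work as stated) and unnecessary: factoriality of $\L(H)$ is immediate from $\Z(\L(H))=\L(H)\cap\L(H)'\subset\L(H)'\cap\L(G)=\cc$. Second, your claim that $G$ must be unimodular is false; \Cref{cor:type of L(G)} explicitly covers non-unimodular $G$. Third, and most importantly, your contradiction argument only addresses why $\L(G)$ cannot be type $\I$; the case of $\L(H)$ is left unresolved in your ``main obstacle'' paragraph.

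The paper's proof avoids all of this with a short, purely von-Neumann-algebraic argument using no case split, no Plancherel weight, and no unimodularity. Suppose $A\subset B$ is any inclusion of von Neumann algebras with $A'\cap B=\cc$. If $B$ is a type $\I$ factor, identify $B$ with $\B(\K)$; then relative commutants inside $B$ are absolute commutants in $\B(\K)$, so $A=(A'\cap B)'\cap B=\cc'\cap B=B$. Hence $A\subsetneq B$ rules out $B$ being type $\I$. For $A$, the trick you are missing is to pass to commutants: the inclusion $B'\subset A'$ is again irreducible, since $(B')'\cap A'=B\cap A'=\cc$; the same reasoning then shows $A'$ is not type $\I$ when $A'\neq B'$, and as taking commutants preserves type, $A$ is not type $\I$ either.

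The only group-theoretic input beyond \Cref{the:irreducible inclusion for groups with etoile_H} is that $H\subsetneq G$ forces $\L(H)\subsetneq\L(G)$; your Fourier-coefficient idea for this is fine, though the paper treats it as evident.
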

\begin{proof}
    Let $A\subset B$ be an inclusion of von Neumann algebra such that $A'\cap B=\cc$ (we say that the inclusion is \emph{irreducible}). Assume that $B$ is a factor of type $\I$. Then $A=(A'\cap B)'\cap B=B$. Therefore, if $A\neq B$, then $B$ cannot be of type $\I$. Moreover, $B'\subset A'$ is also an irreducible inclusion of von Neumann algebras because $(B')'\cap A'=A'\cap B=\cc$. Therefore, the same argument implies that if $A'$ is a type $\I$ factor, then $B'=A'$ and thus $A=B$. Therefore, if $A\neq B$ is a proper von Neumann subalgebra irreducibly contained in $B$, then $A'$ cannot be of type $\I$, and $A$ neither as taking the commutant preserves type of factors. By \Cref{the:irreducible inclusion for groups with etoile_H}, as soon as $H\neq G$ and $G$ has $\etoile{H}$, we are in this situation and therefore both $\L(H)$ and $\L(G)$ are non-type $\I$ factors.
\end{proof}

Note that property $\etoile{H}$ for any closed subgroup $H$ of $G$ implies that $\L(\ker \Delta_G)$ is a factor. This, in turn, allows for an explicit computation of the $\Sinv$-invariant $\Sinv(\L(G))$ of the factor $\L(G)$, an invariant that was introduced by Connes in \cite{Connes1973Classification} to classify type $\III$ factors (see also \cite{GuintoLorentzNelson2025MurrayvonNeumann} for a concise introduction to this subject). 

\begin{Cor}\label{cor:type of L(G)}
Let $G$ be a locally compact group. If $G$ has property $\etoile{H}$ with respect to a closed subgroup $H$, then $\Sinv(\L(G))=\overline{\Delta_G(G)}$. Therefore,
\begin{itemize}
    \item $\L(G)$ is a type $\II_1$ factor if $G$ is discrete,
    \item $\L(G)$ is a type $\I_{\infty}$ or $\II_{\infty}$ factor if $G$ is unimodular and non-discrete,
    \item $\L(G)$ is a type $\III_{\lambda}$ factor for $\lambda\in ]0,1[$ if $\Delta_G(G)=\lambda^{\zz}$,
    \item $\L(G)$ is a type $\III_{1}$ factor if $\Delta_G(G)$ is dense in $\rr^{*}_+$.
\end{itemize}
\end{Cor}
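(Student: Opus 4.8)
The plan is to combine the irreducible inclusion $\L(\ker\Delta_G)'\cap\L(G)=\cc$ (which follows from \Cref{the:irreducible inclusion for groups with etoile_H} applied with the subgroup $\ker\Delta_G$, since $\etoile{H}$ implies $\etoile{\ker\Delta_G}$ whenever $H\leq\ker\Delta_G$) with Connes' description of the $\Sinv$-invariant. Recall that for a von Neumann algebra $M$ with a faithful normal semifinite weight $\phi$, the $\Sinv$-invariant is $\Sinv(M)=\bigcap_\psi \Sp(\Delta_\psi)$, the intersection over all faithful normal semifinite weights of the spectrum of the modular operator, and $\Sinv(M)\setminus\{0\}$ is a closed subgroup of $\rr^*_+$. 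For a factor, $\Sinv(M)\setminus\{0\}$ being $\{1\}$, $\lambda^\zz$, or $\rr^*_+$ (together with the finiteness of the trace) precisely distinguishes types $\I,\II$; $\III_\lambda$ for $\lambda\in(0,1)$; and $\III_1$; while $0\in\Sinv(M)$ happens exactly for type $\III$. So the four bulleted conclusions about the type of $\L(G)$ follow mechanically once we prove $\Sinv(\L(G))=\overline{\Delta_G(G)}$, noting that $\overline{\Delta_G(G)}$ is either $\{1\}$ (the unimodular case, giving type $\I$ or $\II$ according to discreteness via the Plancherel weight being a finite or infinite trace), a cyclic group $\lambda^\zz$ (type $\III_\lambda$), or all of $\rr^*_+$ (type $\III_1$).

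The core is therefore the identity $\Sinv(\L(G))=\overline{\Delta_G(G)}$. First I would recall the computation of the modular automorphism group of the Plancherel weight $\phi_G$ on $\L(G)$: by modular theory (see \cite[VII.3]{Takesaki2003TheoryOperator}), $\sigma^{\phi_G}_t(\lambda_G(f))=\lambda_G(\Delta_G^{it}f)$, equivalently $\sigma^{\phi_G}_t(\lambda_g)=\Delta_G(g)^{it}\lambda_g$ after suitable interpretation; in particular the centralizer $\L(G)^{\phi_G}$ of the Plancherel weight contains $\L(\ker\Delta_G)$, and the modular operator acts on $\L(G)$ with spectrum controlled by $\overline{\Delta_G(G)}$. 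This already gives the inclusion $\Sinv(\L(G))\subseteq\overline{\Delta_G(G)}$: indeed for any other faithful normal semifinite weight $\psi$, the spectrum of $\Delta_\psi$ is related to that of $\Delta_{\phi_G}$ (Connes' cocycle theorem and the spatial theory show $\Sinv(M)$ is computed from any single weight, with $\Sinv(M)=\bigcap_{e}\Sp(\Delta_{\phi_e})$ over nonzero projections $e$ in the centralizer, and corners only shrink the spectrum), and $\Sp(\Delta_{\phi_G})\setminus\{0\}\subseteq\overline{\Delta_G(G)}$ by the description above.

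For the reverse inclusion $\overline{\Delta_G(G)}\subseteq\Sinv(\L(G))$, the key point is that $\Sinv(\L(G))\setminus\{0\}=\bigcap\{\Sp(\Delta_{\phi_e}): e\ \text{nonzero projection in}\ \L(G)^{\phi_G}\}$, and I would use the \emph{irreducibility} $\L(\ker\Delta_G)'\cap\L(G)=\cc$ to pin this down. Since $\L(\ker\Delta_G)\subseteq\L(G)^{\phi_G}$ and $\L(\ker\Delta_G)$ is itself a factor (the center $\Z(\L(\ker\Delta_G))$ injects into $\L(\ker\Delta_G)'\cap\L(G)=\cc$), for each such corner $e\L(G)e$ the reduced weight still has modular spectrum containing every value $\Delta_G(g)$: concretely, picking $g\in G$ and testing the modular operator against vectors of the form $\lambda_g$ (appropriately placed inside a corner using that $\L(\ker\Delta_G)$ acts with no relative commutant, so $e$ cannot decouple from the translation by $g$), one finds $\Delta_G(g)^{it}$ in the support of $\sigma^{\phi_e}_t$, hence $\Delta_G(g)\in\Sp(\Delta_{\phi_e})$; taking the intersection over $e$ and the closure gives $\overline{\Delta_G(G)}\subseteq\Sinv(\L(G))$. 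Finally $0\in\Sinv(\L(G))$ iff $\L(G)$ is type $\III$ iff $\phi_G$ is not (semifinite-)tracial iff $G$ is non-unimodular, which is consistent with all four bullets. The main obstacle I anticipate is making the corner argument for the reverse inclusion clean: one must genuinely use that $e\L(G)e$ retains enough of the translation structure, which is exactly where irreducibility of the inclusion $\L(\ker\Delta_G)\subseteq\L(G)$ does the work, rather than any softer fact about the Plancherel weight alone.
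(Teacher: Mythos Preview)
Your overall strategy is exactly the paper's: use property $\etoile{H}$ to make the centralizer of the Plancherel weight a factor, then read off $\Sinv(\L(G))$ as the modular spectrum of $\phi_G$. The paper dispatches this in two citations. First, it states that $\L(G)_{\phi_G}=\L(\ker\Delta_G)$ (equality, not just the inclusion you use; though your inclusion together with the irreducibility $\L(\ker\Delta_G)'\cap\L(G)=\cc$ already forces $\Z(\L(G)_{\phi_G})\subseteq\L(\ker\Delta_G)'\cap\L(G)=\cc$, so the centralizer is a factor either way). Then it invokes \cite[Corollaire~3.2.7]{Connes1973Classification}: if the centralizer $M_\phi$ of a faithful normal semifinite weight is a factor, then $\Sinv(M)=\Sp(\Delta_\phi)$. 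Finally it quotes \cite[Proposition~2.26]{Raum2019Csimplicity} for the identification $\Sp(\Delta_{\phi_G})=\overline{\Delta_G(G)}$.

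Where your write-up has a genuine gap is the reverse inclusion $\overline{\Delta_G(G)}\subseteq\Sinv(\L(G))$. You correctly recall Connes' formula $\Sinv(M)=\bigcap_e\Sp(\Delta_{\phi_e})$ over nonzero projections $e\in M_\phi$, but your justification that each corner spectrum still contains $\Delta_G(g)$ --- ``testing against vectors of the form $\lambda_g$ appropriately placed inside a corner using that $\L(\ker\Delta_G)$ acts with no relative commutant, so $e$ cannot decouple from the translation by $g$'' --- is not a proof: no vector is exhibited, no estimate is made, and irreducibility of a subalgebra does not by itself say anything about the modular spectrum in a corner. The actual mechanism behind Connes' corollaire is that in a \emph{factorial} centralizer any two nonzero projections are connected by partial isometries in $M_\phi$, which forces all the reduced spectra $\Sp(\Delta_{\phi_e})$ to coincide with $\Sp(\Delta_\phi)$. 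You should either cite Connes or reproduce that comparison-of-projections argument; the sentence as written does not carry the weight. Likewise, the equality $\Sp(\Delta_{\phi_G})=\overline{\Delta_G(G)}$ (both inclusions) is a concrete computation with the modular operator on $\Ldeux(G)$ that you gesture at but do not perform; the paper simply cites it.
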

\begin{proof}
    Let $\phi_G$ denote the Plancherel weight on $\L(G)$. The centralizer $\L(G)_{\phi_G}$ of $\phi_G$ is precisely the von Neumann algebra $\L(\ker \Delta_G)$, and property $\etoile{H}$ implies that $\L(\ker \Delta_G)$ is a factor. Therefore, by \cite[Corollaire 3.2.7]{Connes1973Classification}, $\Sinv(\L(G))=\Sp(\Delta_{\phi_G})$,  where $\Delta_{\phi_G}$ is the \emph{modular operator} associated to the weight $\phi_G$, and $\Sp(\Delta_{\phi_G})$ denotes its spectrum. But it turns out that, given any locally compact group $G$, $\Sp(\Delta_{\phi_G})=\overline{\Delta_G(G)}$ (see \cite[Proposition 2.26]{Raum2019Csimplicity}).  
    Thus, $\Sinv(\L(G))=\overline{\Delta_G(G)}$. If $G$ is discrete, $\L(G)$ is a type $\II_1$ as $\phi_G$ is finite and tracial. If $G$ is unimodular and non-discrete, its Plancherel weight is tracial and infinite, and thus it is of type $\I_{\infty}$ or $\II_{\infty}$. The remaining cases follow from the definition of types $\III_{\lambda}$ in terms of the $\Sinv$-invariant.
\end{proof}

\subsection{Application 1: Hecke von Neumann Algebras of Amalgamated Free Products}\label{subsection: Hecke algebra of AFP}

    In this section, we apply \Cref{prop:factoriality for corners} to amalgamated free products of locally compact groups (see \cite[Chapter 8.B]{CornulierHarpe2016MetricGeometry}). This yields the factoriality of the Hecke von Neumann algebra naturally associated with certain Burger-Mozes groups, defined in \cite{BuergerMozes2000GroupsActing}. Before that, we set up some notations for group actions. Given $G\acts X$ an action of a group on a given set and $A$ a subset of $X$, we use the following notations for the fixator, stabilizer, and set of fixed points of the action:
        \begin{align*}
            \Fix_{G}(A)&\doteq \{ g\in G \mid \forall x\in A, gx=x\}\\
            \Stab_{G}(A)&\doteq \{ g\in G \mid \forall x\in A, gx\in A\}\\
            \fp{A}{G}&\doteq \{x\in A\mid \forall g\in G, gx=x\}.
        \end{align*}

    \begin{Prop}\label{prop:factoriality of corner of AFP}
        Let $A,B$ be two locally compact groups with a common compact open subgroup $K$, and let $G=A\ast_{K}B$. If $\abs{\nmlz{A}{K}/K}\geq 3$ and $\abs{\nmlz{B}{K}/K}\geq 2$, then $p_KL(G)p_K$ is a non-amenable factor.
    \end{Prop}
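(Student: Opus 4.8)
The plan is to apply \Cref{prop:factoriality for corners} directly: it suffices to show that $K\co G = A\ast_K B$, that $K$ is normalized by itself (trivially), and that for every $g\in G\setminus K$ the orbit $\Ccc{K}{gK}$ of $gK$ under the conjugation action of $K = \nmlz{K}{K}$ on $G/K$ is infinite. Wait — \Cref{prop:factoriality for corners} uses a closed subgroup $H\le \nmlz{G}{K}\cap\ker\Delta_G$, so here I should take $H$ to be a suitable subgroup of $\nmlz{G}{K}$; since $A\ast_K B$ is unimodular (an amalgam of compact-by-discrete... more carefully: $G$ acts on its Bass–Serre tree with compact open vertex stabilizers, hence is unimodular), the condition $H\le\ker\Delta_G$ is automatic. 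So the real content is: \emph{choose $H\le\nmlz{G}{K}$ and show that every nontrivial double coset $KgK$, $g\notin K$, has infinite $H$-conjugacy orbit in $G/K$.}

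The natural choice is $H=\nmlz{G}{K}$ itself, but a more hands-on approach uses the normal form theory for amalgams. Every $g\in G\setminus K$ has a reduced word representation $g = c_1 c_2\cdots c_\ell$ with $c_i$ alternately in $A\setminus K$ and $B\setminus K$, $\ell\ge 1$, and the double coset $KgK$ determines the sequence of cosets $(c_i K)$ up to the obvious ambiguity at the ends. The hypotheses $|\nmlz{A}{K}/K|\ge 3$ and $|\nmlz{B}{K}/K|\ge 2$ give us: at least two nontrivial cosets $aK, a'K$ in $\nmlz{A}{K}/K$ distinct from $K$ (in fact $\nmlz{A}{K}/K$ has order $\ge 3$, so at least $a_1K\ne a_2K$ both $\ne K$), and at least one nontrivial coset $bK\in\nmlz{B}{K}/K$. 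I would then conjugate $gK$ by elements of the form $w = (a_{i_1} b)(a_{i_2} b)\cdots \in\nmlz{G}{K}$ (products of normalizing generators, which lie in $\nmlz{G}{K}$ since $K$ is normalized by each factor generator). The key step is a normal-form/length computation: conjugating a reduced word $g$ of length $\ell$ by such a $w$ of length $2m$ typically produces a reduced word of length $\ell + 4m$ (or at least of strictly growing length as $m\to\infty$), so the cosets $w g w^{-1} K$ are pairwise distinct for distinct $m$; the two choices $a_1,a_2$ in the $A$-factor give enough freedom to guarantee no cancellation collapses the word at the junctions. Hence $\ccc{H}{gK}=\infty$.

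I expect the main obstacle to be the bookkeeping in the normal-form argument: one must verify that the conjugates $wgw^{-1}$, written as words in the $c_i$ and the normalizing generators, are genuinely reduced (no two adjacent syllables land in the same factor with product in $K$), which is where the precise numerology $\ge 3$ and $\ge 2$ enters — the $\ge 3$ on the $A$-side is needed so that at a junction where a syllable from $w$ meets a syllable of $g$ lying in $A$, one can always pick a normalizing coset avoiding the single ``bad'' value that would cause cancellation, while the $\ge 2$ on the $B$-side suffices because $B$-syllables from $w$ only ever meet $A$-syllables. Once reducedness is established, distinctness of the resulting double cosets (hence of the points $wgw^{-1}K\in G/K$) is immediate from uniqueness of reduced forms, and non-amenability of $p_K\L(G)p_K$ follows because the Hecke von Neumann algebra of such an amalgam contains a copy of the group von Neumann algebra of a nonabelian free group (coming from the free product structure with the stated index bounds), which is non-amenable; alternatively one invokes the known structure of Hecke pairs for amalgams. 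I would state the non-amenability either via this embedding or by citing the relevant computation of the Hecke algebra of an amalgam as a reduced free product.
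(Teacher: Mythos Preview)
Your normal-form strategy for showing $\ccc{H}{gK}=\infty$ is essentially the same as the paper's: conjugate by powers of words of the form $a_ib$ (with $a_1,a_2\in\nmlz{A}{K}\setminus K$ in distinct $K$-cosets and $b\in\nmlz{B}{K}\setminus K$), do a case analysis on whether the first and last syllables of $g$ lie in $A$ or $B$, and observe that the syllable length grows. So on that point you are aligned with the paper.

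The genuine gap is unimodularity. Your claim that $G=A\ast_K B$ is automatically unimodular is false: the vertex stabilizers in the Bass--Serre tree are (conjugates of) $A$ and $B$, not $K$, and these are only assumed to contain $K$ as a compact open subgroup, not to be compact themselves. If $A$ is non-unimodular then so is $G$ (since $A$ is open in $G$), and then $\nmlz{A}{K}$ need not lie in $\ker\Delta_G$, so \Cref{prop:factoriality for corners} does not apply with $H=\nmlz{G}{K}$. The paper repairs this by passing to the unimodular parts $\nmlz{A}{K}^0\doteq\nmlz{A\cap\ker\Delta_G}{K}$ and $\nmlz{B}{K}^0$, and then argues that the index hypotheses $\ge 3$ and $\ge 2$ survive: if $\abs{\nmlz{A}{K}^0/K}\le 2$ then $\nmlz{A}{K}^0$ is a compact open normal subgroup of $\nmlz{A}{K}$, forcing $\nmlz{A}{K}=\nmlz{A}{K}^0$ and contradicting $\abs{\nmlz{A}{K}/K}\ge 3$. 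One then takes $H=\nmlz{A}{K}^0\ast_K\nmlz{B}{K}^0\le\ker\Delta_G\cap\nmlz{G}{K}$ and runs your normal-form argument inside this $H$.

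For non-amenability, your sketch (``contains a free group factor'') is plausible but not fully justified; the paper simply cites \cite[Lemma 3.1]{HoudayerRaum2019LocallyCompact}.
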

    \begin{proof}
        We first explain how to drop the unimodularity assumption that appears in \Cref{prop:factoriality for corners}. By construction, $A$ and $B$ are open subgroups of $G$, so the restrictions of $\Delta_G$ give their modular functions. The same holds for $\nmlz{A}{K}$ and $\nmlz{B}{K}$ because $K$ is open. Let $\nmlz{A}{K}^{0}=\nmlz{A\cap \ker \Delta_G}{K}$ denote the unimodular part of $\nmlz{A}{K}$. If $\abs{\nmlz{A}{K}^{0}/K}\leq 2$, then $\nmlz{A}{K}^{0}$ is a compact open normal subgroup of $\nmlz{A}{K}$. Therefore, $\nmlz{A}{K}=\nmlz{A}{K}^0$ and thus $\abs{\nmlz{A}{K}/K}\leq 2$, hence a contradiction. Therefore, we may assume that $\abs{\nmlz{A}{K}^0/K}\geq 3$, and the same line of reasoning ensures that we can assume  $\abs{\nmlz{B}{K}^0/K}\geq 2$. 
        
        Let $H$ be the open subgroup $\nmlz{A}{K}^0\ast_{K}\nmlz{B}{K}^0$ of $G$ : $H$ is contained both in $\ker \Delta_G$ and in $\nmlz{G}{K}$. Let $a_1,a_2\in \nmlz{A}{K}^0\setminus K$ such that $a_2^{-1}a_1\notin K$, and $b\in \nmlz{B}{K}^0\setminus K$. Let $g\in G\setminus K$: it can be (uniquely) written as a product 
        \[g=g_1g_2\cdots g_nk\quad \text{ where }n\geq 1, k\in K,\text{ and  the } g_i\text{'s alternatively belong to $A\setminus K$ and $B\setminus K$}.\]
        We claim that $\ccc{H}{gK}=\infty$. This follows from the following case analysis. If
        \begin{description}
            \item[{\boldmath\bfseries$g_1\in A$, $g_n\in A$,}] then $\{(a_1b)^n(gK)(a_1b)^{-n}\}_{n\in \nn}$ is an infinite family of mutually disjoint cosets in the orbit of $gK$ under the conjugation action. 
            \item[{\boldmath\bfseries$g_1\in B$, $g_n\in B$,}] then $\{(ba_1)^n(gK)(ba_1)^{-n}\}_{n\in \nn}$ is an infinite family of mutually disjoint cosets in the orbit of $gK$ under the conjugation action. 
            \item[{\boldmath\bfseries$g_1\in A$, $g_n\in B$,}] then there exists $i\in \{1,2\}$ such that $a_ig_1\notin K$, and then $\{(ba_i)^n(gK)(ba_i)^{-n}\}_{n\in \nn}$ is an infinite family of mutually disjoint cosets in the orbit of $gK$ under the conjugation action. 
            \item[{\boldmath\bfseries$g_1\in B$, $g_n\in A$,}] then there exists $i\in \{1,2\}$ such that $a_ig_n\notin K$, and then $\{(a_ib)^n(gK)(a_ib)^{-n}\}_{n\in \nn}$ is an infinite family of mutually disjoint cosets in the orbit of $gK$ under the conjugation action.
        \end{description}
        \Cref{prop:factoriality for corners} then implies the factoriality of $p_K\L(G)p_K$. The non-amenability of $p_K\L(G)p_K$ is a consequence of \cite[Lemma 3.1]{HoudayerRaum2019LocallyCompact}.
    \end{proof}

    We now apply this criterion to groups acting on trees. Given any graph $\G$, denote by $\V(\G)$ its set of vertices, and by $\E(\G)$ its set of edges. Given any vertex $v\in \V(\T_d)$, denote by $\E_v(\T_d)$ the set of edges adjacent to $v$. Let $\T$ be a tree, and denote by $\Aut(\T)$ its totally disconnected locally compact group of automorphisms.
    By Bass-Serre theory, every closed subgroup of $\Aut(\T)$ acting on $\T$ without inversions, with two orbits of vertices and one orbit of edges, is isomorphic to an amalgamated free product of compact totally disconnected groups (see \cite[Section 2.4]{HoudayerRaum2019LocallyCompact}). We consider the Burger-Mozes group introduced in \cite{BuergerMozes2000GroupsActing} as an application. 
    
    \paragraph{Burger-Mozes groups.}  
    Fix $d\geq 3$ and let $\T_d$ be the $d$-regular tree. A \emph{legal coloring} of $\T_d$ is a map $\operatorname{col}:\E(\T_d)\to \{1,\dots,d\}$ such that for every $v\in \V(\T_d)$, $\operatorname{col}|_{ \E_v(\T_d)}:\E_v(\T_d)\to \{1,\dots,d\}$ is a bijection. Given $g\in\Aut(\T_d)$, the \emph{local action of $g$} at the vertex $v$ is the permutation $\sigma(g,v)\in \Sym_d$ defined by
    \( \sigma(g,v)\doteq \operatorname{col}\circ g \circ (\operatorname{col}|_{\E_v(\T_d)})^{-1}.\)
    Given a subgroup $F$ of $\Sym_d$, the \emph{Burger-Mozes group associated to $F$} is defined as 
    \[ \bm{F}\doteq \{g\in \Aut(\T_d)\mid \sigma(g,v)\in F \text{ for every } v\in \V(\T_d)\}.\]
    These groups do not depend on the choice of the coloring. They always act transitively on $\V(\T_d)$, albeit possibly with inversions. Let $\Aut(\T_d)^+$ denote the index $2$ subgroup of $\Aut(\T_d)$ consisting of \emph{type preserving} automorphisms: these are the automorphisms that preserve the canonical bipartition of the vertices of the tree. Then $\bmp{F}\doteq \bm{F}\cap \Aut(\T_d)^+$ is an index two open subgroups of $\bm{F}$, that acts without inversion on $\T_d$. The groups $\bm{F}$ and $\bmp{F}$ are discrete if and only if $F$ acts freely on $\{1,\dots,d\}$; that is, if $\Fix_F(\{k\})=\{\id\}$ for every $k\in \{1,\dots,d\}$. 

    In the next corollary, we obtain factoriality for some Hecke von Neumann algebras of Burger-Mozes groups. In this situation, we are able to describe this von Neumann algebra quite explicitly, thanks to Dykema's general results on amalgamated free products of multimatrix von Neumann algebra. We therefore introduce the so called \emph{interpolated free group factors}. 

    \paragraph{ Interpolated free group factors} Given $n\geq 2$, let $\ff_n$ denote the free group with $n$ generators. As $\ff_n$ has infinite conjugacy classes, $\L(\ff_n)$ is a factor called a \emph{free group factor}. A continuous interpolation $(\L(\ff_t))_{t>1}$ of this sequence of factors has been independently constructed in \cite{Radulescu1994RandomMatrices} and \cite{Dykema1994Interpolated} under the name of \emph{interpolated free group factors}: $\L(\ff_t)$ is isomorphic to any corner $p\M_k(\cc)\ptvn \L(\ff_n)p$ where $p$ is a projection in $\M_k(\cc)\ptvn \L(\ff_n)$ with $\operatorname{Tr}(p)^2=\frac{n-1}{t-1}$, $\operatorname{Tr}$ being the tensor product of the normalized traces on $\M_k(\cc)$ and $\L(\ff_n)$. These factors are non-amenable.
    
    \begin{Cor}\label{cor:Hecke Burger Mozes}
        Let $F\leq\Sym_d$, and $K=\Fix_{\bmp{F}}(\{e\})\leq \bmp{F}$ for a given $e\in \E(\T_d)$. If $F\acts \{1,\dots,d\}$ is transitive and $\abs{\fp{\{1,\dots,d\}}{\Fix_F(\{1\})}}\geq 3$, then 
        \(p_{K}\L(\bmp{F})p_{K}\) is a factor.
        More precisely, $p_{K}\L(\bmp{F})p_{K} \simeq \L(\ff_t)$ for some $t>0$.  
    \end{Cor}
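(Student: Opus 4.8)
The plan is to realize $\bmp{F}$ as an amalgamated free product so that \Cref{prop:factoriality of corner of AFP} applies, and then to pin down the resulting factor using Dykema's structure theory for free products of finite-dimensional von Neumann algebras. Concretely, I would fix an edge $e=\{v,w\}$ of $\T_d$ with $K=\Fix_{\bmp{F}}(\{e\})$ and set $A=\Stab_{\bmp{F}}(v)$, $B=\Stab_{\bmp{F}}(w)$; these are compact open subgroups of $\bmp{F}$ (being closed subgroups of vertex stabilizers of $\Aut(\T_d)$), with $K=A\cap B$. Since $\bmp{F}$ is type-preserving it acts on $\T_d$ without inversions, transitively on edges (because $F$ is transitive: move the endpoints by vertex-transitivity, then rotate around a vertex using the full local action) and with exactly two orbits of vertices, so by Bass--Serre theory $\bmp{F}\cong A\ast_K B$. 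Moreover $[A:K]=[B:K]=d$ by orbit--stabilizer for the actions on the edges at $v$ and at $w$, so $\bmp{F}$ is unimodular.

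Next I would compute the normalizers. Any element of $\bmp{F}$ fixing $v$ preserves the bipartition, so $A=\Stab_{\bm{F}}(v)$ and the local-action map $A\to F$, $g\mapsto\sigma(g,v)$, is onto. Taking the colour of $e$ at $v$ to be $1$, one checks that $K$ is precisely the preimage of $\Fix_F(\{1\})$ (an element $g\in A$ fixes $e$ iff it fixes the colour-$1$ edge at $v$, i.e.\ iff $\sigma(g,v)(1)=1$) and that conjugation by $g\in A$ sends $K$ to the preimage of $\Fix_F(\{\sigma(g,v)(1)\})$; hence $g$ normalizes $K$ iff $\Fix_F(\{\sigma(g,v)(1)\})=\Fix_F(\{1\})$. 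As $F$ is transitive all point stabilizers have equal order, so this holds exactly when $\sigma(g,v)(1)$ is fixed by $\Fix_F(\{1\})$, giving $\abs{\nmlz{A}{K}/K}=\abs{\fp{\{1,\dots,d\}}{\Fix_F(\{1\})}}\geq 3$; the same argument, transporting via transitivity of $F$ to the colour of $e$ at $w$, gives $\abs{\nmlz{B}{K}/K}\geq 3\geq 2$. Then \Cref{prop:factoriality of corner of AFP} shows that $p_K\L(\bmp{F})p_K$ is a non-amenable factor.

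To identify the factor, I would use that, from $\bmp{F}=A\ast_K B$ and the identity $p_K\L(K)p_K=\cc p_K$, the Hecke von Neumann algebra decomposes as a reduced free product over $\cc$, namely $p_K\L(\bmp{F})p_K\cong(p_K\L(A)p_K)\ast(p_K\L(B)p_K)$ with respect to the traces $x\mapsto\ps{x\xi_K}{\xi_K}$ (these are traces by unimodularity of $A$, $B$ and $\bmp{F}$). Since $A$, $B$ are compact and $K$ is open of index $d$, the corners $p_K\L(A)p_K$ and $p_K\L(B)p_K$ are finite-dimensional multimatrix algebras $\bigoplus_\pi M_{\dim\H_\pi^K}(\cc)$ carrying faithful traces prescribed by the multiplicities of $K$-fixed vectors in the irreducibles of $A$ and $B$. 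By Dykema's classification of free products of multimatrix algebras \cite{Dykema1994Interpolated} (see also \cite{Radulescu1994RandomMatrices}), this free product has the form $\L(\ff_t)\oplus D$ with $D$ finite-dimensional; since the previous step shows it is a non-amenable factor, $D=0$ and the interpolated free group factor part is nondegenerate, whence $p_K\L(\bmp{F})p_K\cong\L(\ff_t)$ for some $t>1$.

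I expect the main obstacle to be this last step: rigorously matching the Hecke algebra of the amalgam with the free product over $\cc$ of the two finite-dimensional Hecke algebras of $A$ and $B$ (via the reduction property of amalgamated free products by a projection in the core algebra), and bookkeeping the trace weights precisely enough to feed into Dykema's theorem. The factoriality and non-amenability produced by \Cref{prop:factoriality of corner of AFP} then carry the rest, so that neither the finite-dimensional summand nor the exact value of $t$ needs to be examined by hand.
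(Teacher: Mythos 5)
Your factoriality argument is correct and is essentially the paper's: you realize $\bmp{F}$ as the amalgam $A\ast_K B$ of the two vertex stabilizers over the edge fixator via Bass--Serre theory, and the computation $\abs{\nmlz{A}{K}/K}=\abs{\fp{\{1,\dots,d\}}{\Fix_F(\{1\})}}\geq 3$ (using that transitivity of $F$ forces all point stabilizers to have equal order, so containment of stabilizers implies equality) is exactly the paper's, after which \Cref{prop:factoriality of corner of AFP} gives a non-amenable factor.

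The identification of the factor, however, rests on a false isomorphism. You claim $p_K\L(A\ast_KB)p_K\cong(p_K\L(A)p_K)\ast(p_K\L(B)p_K)$, a reduced free product over the scalars. Free independence of the two corners with respect to $\omega=\ps{\,\cdot\,\xi_K}{\xi_K}$ does hold, but the two corners do \emph{not} generate $p_K\L(G)p_K$: compressing $p_Kxyp_K$ with $x\in\L(A)$, $y\in\L(B)$ produces the cross term $p_Kx(1-p_K)yp_K$, which is not reachable from the two sub-corners. Concretely, $p_K\L(G)p_K$ acts standardly on $\ell^2(K\backslash G/K)$ with cyclic and separating vector $\xi_K$, and the number of double cosets $KabK$ with $a\in A\setminus K$, $b\in B\setminus K$ is in general strictly larger than $(\abs{K\backslash A/K}-1)(\abs{K\backslash B/K}-1)$, the dimension of the corresponding piece of the free product Hilbert space; already for $\Sym_3\ast_{\langle(12)\rangle}\Sym_3$ there are two distinct double cosets of word type $AB$ while each factor has only one nontrivial double coset, so the algebra generated by the two corners has a strictly smaller cyclic subspace at $\xi_K$ and is therefore a proper subalgebra. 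The same mismatch occurs for Burger--Mozes groups, where $K\backslash G/K$ is the orbit space of the edge fixator on all edges of $\T_d$ and the hypothesis that $\Fix_F(\{1\})$ has several fixed points makes this count exceed the word count. The paper's route avoids this: it keeps the amalgamation, writing $\L(\bmp{F})\cong\L(H)\ast_{\L(K)}\L(H)$ as an amalgamated free product of multimatrix algebras over the multimatrix algebra $\L(K)$, applies \cite[Theorem 5.1]{Dykema1995Amalgamated} to get $M_0\oplus M_1$ with $M_0$ finite amenable and $M_1$ a direct sum of interpolated free group factors, and only then compresses by $p_K$, using that the corner is a non-amenable factor to conclude it is a corner of a single interpolated free group factor and hence isomorphic to some $\L(\ff_t)$. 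Your non-amenability-plus-factoriality endgame is the right idea, but it must be fed by the amalgamated version of Dykema's theorem, not the scalar one.
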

    \begin{proof}
        Let $v_1,v_2$ be the two endpoints of $e$. Without loss of generality, we may assume that $\operatorname{col}(e)=1$. Set $G=\bmp{F}$, $H=\Fix_G(v_1)\simeq \Fix_G(v_2)$, $K=\Fix_G(e)$ and $F_k=\Fix_F(\{k\})$ for every $k\in \{1,\dots,d\}$. By Bass-Serre theory, $G$ is isomorphic to $H\ast_K H$. 
        Moreover,  
        \(\nmlz{H}{K}=\Stab_H(\fp{\E_{v_0}(\T_d)}{K})),\)
        so that 
        \(\nmlz{H}{K}/K\simeq \Stab_F(\fp{\{1,\dots,d\}}{F_1})/F_1.\)
        By transitivity of $F$, $k\in \fp{\{1,\dots,d\}}{F_1}$ if and only if $F_k=F_1$. Let $k,l\in \fp{\{1,\dots,d\}}{F_1}$, and $\sigma\in F$ such that $\sigma(k)\in\fp{\{1,\dots,d\}}{F_1}$. Then $F_{\sigma(l)}=\sigma F_l \sigma^{-1}=\sigma F_k \sigma^{-1}=F_{\sigma(k)}=F_1$ and thus $\sigma(l)\in \fp{\{1,\dots,d\}}{F_1}$. Therefore, $\sigma\in \Stab_H(\fp{\{1,\dots,d\}}{F_1})$ if and only if $\sigma(1)\in \fp{\{1,\dots,d\}}{F_1}$. By transitivity of $F$, $\abs{\nmlz{H}{K}/K}=\abs{\Stab_H(\fp{\{1,\dots,d\}}{F_1})/F_1}\geq \abs{\fp{\{1,\dots,d\}}{F_1}}\geq 3$, and therefore $p_K\L(\bmp{F})p_K$ is a factor by \Cref{prop:factoriality of corner of AFP}.

        This factor is an interpolated free group factor due to Dykema's abstract description of the amalgamated free product of multimatrix von Neumann algebras \cite[Theorem 5.1]{Dykema1995Amalgamated}. As $H$ and $K$ are compact, their von Neumann algebras are precisely multimatrix algebras, by the Peter-Weyl theorem, and therefore 
        \[\L(\bmp{F})=\L(H\ast_K H)\simeq \L(H)\ast_{\L(K)}\L(H)=M_0\oplus M_1\]
        where $M_0$ is a finite amenable von Neumann algebra and $M_1$ is a direct sum of interpolated free group factors. The factor $p_K\L(\bmp{F})p_K$ being non-amenable, it must be a corner of an interpolated free group factor, \ie an interpolated group factor itself.
    \end{proof}

    \begin{Ex} For $n\geq 3$ and $m\geq 2$, let $C_n$ be the cyclic group with $n$ elements and $\tilde{F}$ a transitive subgroup of $\Sym_m$.  Let $F=C_n\times \tilde{F}\subset \Sym(\{1,\dots,n\}\times \{1,\dots,m\})$. Then $\bmp{F}$ is non-discrete and satisfies the hypothesis of \Cref{cor:Hecke Burger Mozes}. If $K$ denotes the stabilizer of any edge, then $p_KL(\bmp{F})p_K$ is an interpolated free group factor.
    \end{Ex}

    \subsection{Application 2: HNN Extensions of Locally Compact Groups}\label{subsection:HNN extensions}\label{subsection: Factoriality of HNN extensions}
    In this section, we reformulate the ideas of \cite[Proof for Theorem G]{Raum2021Erratum} in the von Neumann algebraic setting rather than the $\Cstar$-algebraic one, using \Cref{cor:generalizedSuzukicriterion}. As this result offers little novelty compared to \cite{Suzuki2017Elementary}, we do not claim originality for the following results, but we feel it useful to write them down in the von Neumann algebraic setting.

     Let $H$ be a locally compact group, and let $K_{1}$ and $K_{-1}$ be two open subgroups of $H$, and $\theta:K_{1}\to K_{-1}$ be a continuous and open isomorphism of locally compact groups. Then the group \[ \HNN(H,K_{1},K_{-1},\theta)\doteq \left \langle H,t\mid tkt^{-1}=\theta(k)\quad\forall k\in K_{1}\right \rangle\]
    is the \emph{$\HNN$ extension corresponding to $H,K_{1},K_{-1},\theta$}. It admits a unique topology such that the inclusion $H\hookrightarrow \HNN(H,K_{1},K_{-1},\theta)$ is open, and this topology turns the $\HNN$ extension into a locally compact group (see \cite[Proposition 8.B.10]{CornulierHarpe2016MetricGeometry}).  

    Let $G$ be such an $\HNN$ extension. Any element $g\in G$ can be written as a product $g=g_1t^{\epsilon_1}\cdots g_nt^{\epsilon_n}g_{n+1}$ where each $g_k$ belongs to $H$ and $\epsilon_k$ to $\{1,-1\}$. Such an expression is called a \emph{reduced form} if, whenever $\epsilon_{k-1}=-\epsilon_{k}$, the group element $g_k$ does not belong to $K_{\epsilon_k}$. Every $g\in G$ admits such reduced forms, and the quantities $\sigma(g)\doteq \sum_{i=1}^n\epsilon_i$ and $\tau(g)\doteq \sum_{i=1}^n\abs{\epsilon_i}$ are well-defined, that is they do not depend on the choice of a reduced form \cite[Lemma 2.3]{LyndonSchupp1977CombinatorialGroupTheory}.

    \begin{Prop}\label{prop:factoriality result for HNN extensions}
    Let $G=\HNN(H,K_{1},K_{-1},\theta)$ be a locally compact $\HNN$ extension. For $m\in \nn$, let \( K_m\doteq\{ x\in G\mid \forall l\in \{-m,\dots,m\}, t^lxt^{-l}\in H\}\) and denote by $\Z_H(K_{\pm 1})$ the centralizer of $K_{\pm 1}$ in $H$. Assume that $(K_m)_{m\geq 1}$ forms a neighborhood basis at the identity consisting of compact open subgroups, and that there exist $x_{1}\in (\Z_H(K_{1})\setminus K_{1})\cap \ker \Delta_H$ and $x_{-1}\in (\Z_H(K_{-1})\setminus K_{-1})\cap \ker \Delta_H$. Then $\L(L)'\cap \L(G)=\cc$, where $L=\overline{\langle x_{1},x_{-1},t \rangle}\cap \ker \Delta_G$. 
    \end{Prop}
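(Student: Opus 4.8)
The strategy is to verify the hypotheses of \Cref{cor:generalizedSuzukicriterion} for the group $L=\overline{\langle x_1,x_{-1},t\rangle}\cap\ker\Delta_G$, using the compact open subgroups $(K_m)_{m\geq 1}$ as the neighborhood basis at the identity. Since these are assumed to form such a basis and each $K_m$ is normalized by $t$ (and, after possibly shrinking, by $x_{\pm1}$ since $x_{\pm1}\in\ker\Delta_H$ centralizes $K_{\pm1}$, hence normalizes $K_m$ for $m$ large — this normalization check is the first routine point to pin down), it suffices to show that for every $g\in G\setminus\{e\}$ there are arbitrarily large $m$ with $\ccc{L}{gK_m}=\infty$. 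First I would recall that $t K_m t^{-1}=K_{m+1}$ type relations, so each $K_m$ is indeed contained in $H$ and conjugation by $t$ controls the filtration; then I would argue $t\in\ker\Delta_G$ (as $t$ normalizes a compact open subgroup) so that $t$, together with $x_1,x_{-1}$, lies in $\ker\Delta_G$, which shows $L$ contains $\langle x_1,x_{-1},t\rangle$ densely and is a closed subgroup of $\ker\Delta_G$ — this takes care of hypothesis 1 of $\etoile{L}$ for free via \Cref{cor:generalizedSuzukicriterion}.

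The heart of the argument is the orbit-infinitude claim, and here I would run a Britton's-lemma case analysis on a reduced form $g=g_1t^{\epsilon_1}\cdots g_nt^{\epsilon_n}g_{n+1}$ of a given nontrivial $g$, mirroring the amalgamated-free-product analysis in the proof of \Cref{prop:factoriality of corner of AFP}. Fix $m$ large enough that $x_{\pm1}\notin K_m$ and that $K_m$ normalizes appropriately. If $\tau(g)=0$, i.e. $g\in H\setminus\{e\}$, then $g\notin K_1$ or $g\notin K_{-1}$ (since $g\neq e$ and $K_1\cap K_{-1}$ is not the issue — rather one uses that $g$ cannot centralize both $t$-conjugations trivially); pick the sign $\varepsilon$ with $g\notin K_\varepsilon$, or more robustly with $g$ not commuting with $x_\varepsilon$ modulo $K_m$, and then $\{t^j x_\varepsilon^{k}\cdots\}$ — actually the clean move is: the conjugates $t^{-j}(gK_m)t^{j}$ for $j\geq0$ have strictly growing $\tau$ once we multiply $g$ by a suitable $x_\varepsilon$ to break reducedness cancellation, producing infinitely many distinct cosets. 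If $\tau(g)\geq1$, conjugation by powers of $t$ already shifts the "support" of the reduced word and, combined with inserting $x_{\pm1}$ to prevent the newly created $t^{\pm1}t^{\mp1}$ pairs from cancelling (this is exactly where $x_\varepsilon\in\Z_H(K_\varepsilon)\setminus K_\varepsilon$ is used: it fails to lie in $K_\varepsilon$ so Britton reduction does not apply, yet it centralizes $K_\varepsilon$ so the conjugation action on cosets is controlled), yields an infinite family of pairwise-disjoint cosets in the $\nmlz{L}{K_m}$-orbit of $gK_m$. In each branch one exhibits an explicit sequence of conjugating elements from $\langle x_1,x_{-1},t\rangle\cap\ker\Delta_G\subset\nmlz{L}{K_m}$ and checks, via the well-definedness of $\sigma$ and $\tau$ on reduced forms \cite[Lemma 2.3]{LyndonSchupp1977CombinatorialGroupTheory}, that the resulting cosets are distinct.

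Once the orbit-infinitude is established for arbitrarily large $m$ and every nontrivial $g$, \Cref{cor:generalizedSuzukicriterion} applies directly to $L$ as a group, giving that $\L(L)$ is a factor; but to get the stronger conclusion $\L(L)'\cap\L(G)=\cc$ I would instead invoke \Cref{the:irreducible inclusion for groups with etoile_H} with $H:=L$, having just verified property $\etoile{L}$ for $G$ (the $\limsup$ condition is immediate from $\ccc{L}{gK_m}=\infty$ for infinitely many $m$, since $\mu_G(K_m)^2>0$). The main obstacle I anticipate is the bookkeeping in the $\tau(g)\geq1$ case: ensuring that after conjugating by a word in $t$ and the $x_{\pm1}$'s the resulting expression is genuinely reduced (no Britton cancellation), so that $\tau$ of the conjugate grows without bound and the cosets are provably distinct — this is where the hypotheses $x_{\pm1}\in\Z_H(K_{\pm1})\setminus K_{\pm1}$ must be deployed with care, paralleling but slightly complicating the four-case table in the proof of \Cref{prop:factoriality of corner of AFP}.
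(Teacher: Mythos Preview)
Your overall strategy—run a Britton's-lemma case analysis on reduced forms to establish infinite $\nmlz{L}{K_m}$-orbits, then invoke \Cref{the:irreducible inclusion for groups with etoile_H}—is the paper's strategy as well. But two of your preparatory claims are false, and they undermine the orbit argument as you have set it up.

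First, the assertion that $t\in\ker\Delta_G$ fails in general. Since $tK_1t^{-1}=K_{-1}$, one computes $\Delta_G(t)=\mu_G(K_1)/\mu_G(K_{-1})$, which need not equal $1$; in the Baumslag--Solitar completion $G(m,n)$ with $|m|\neq n$ it is precisely $|m/n|$, which is why $\L(G(m,n))$ is of type $\III_{|m/n|}$. So $t\notin L$ in general, and conjugation by bare powers of $t$ is unavailable. Your parenthetical justification (``$t$ normalizes a compact open subgroup'') is circular: if $\Delta_G(t)\neq 1$ then $t$ normalizes \emph{no} compact open subgroup. Second, and relatedly, $t$ does not normalize $K_m$: for $x\in K_m$, membership of $txt^{-1}$ in $K_m$ would require $t^{m+1}xt^{-(m+1)}\in H$, which is not part of the definition of $K_m$.

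The paper handles both issues simultaneously by conjugating only by words $w$ in $\{x_1,x_{-1},t^{\pm1}\}$ that (i) have \emph{zero} net $t$-exponent, so that $w\in\ker\Delta_G$ (since $x_{\pm1}\in\ker\Delta_H$ and $H$ is open), and (ii) have all partial $t$-exponent sums from the right bounded by $m$, which forces $w$ to \emph{centralize} $K_m$ (since $x_{\pm1}$ centralize $K_{\pm1}\supset K_m$). These words form a subgroup $L_m\leq L\cap\nmlz{L}{K_m}$, and the explicit conjugators in the case analysis are powers of elements such as $x_{-1}t^{-s}x_1t^s$ or $x_{-\epsilon_1}t^{-\epsilon_1}x_{\epsilon_1}t^{\epsilon_1}$, all living in $L_m$. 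Your sketch of ``conjugate by $t^{-j}$ and insert $x_{\pm1}$ to block Britton cancellation'' is morally in this direction, but without imposing constraints (i) and (ii) the conjugating elements you produce need not lie in $L$ nor act on $G/K_m$, so the orbit $\Ccc{L}{gK_m}$ is not actually being enlarged.
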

    \begin{proof}
    Let $m\geq 1$. Consider: 
     \[L_m\doteq\overline{\left\langle  g_1t^{\epsilon_1}\dots g_nt^{\epsilon_n}g_{n+1} \text{ reduced form }\middle|\; g_i\in \{x_{-1},x_{1},e\}, \sum_{i=1}^n\epsilon_i=0 \text{ and } \forall l\leq n,\; \sum_{i=n-l}^{n}\epsilon_i\leq m\right\rangle}.\]
    Then $L_m$ is clearly a subgroup of $L$, and $L_{m}$ centralizes $K_m$ (therefore $L_{m}\leq\nmlz{L}{K_m}$). We aim to show that $\ccc{L_m}{gK_m}=\infty$ whenever $g\notin K_m$, which implies the strong form of $\etoile{L}$ used in \Cref{cor:generalizedSuzukicriterion}. Let $g\in G\setminus K_m$. Consider a reduced form of $g$, and distinguish the following cases:
    \begin{description}
        \item[{\boldmath\bfseries If $g\in H\setminus K_m$.}] Let $s\in \{-m,\dots,m\}$ such that $t^sgt^{-s}\notin H$. If $s>0$, then $t^{s-1}gt^{-(s-1)}\notin K_1$. Set $x=x_{-1}t^{-s}x_{1}t^{s}$. Then for any $N\in \nn$, $x^N\in L_{m}$ and $\tau(x^N)=2Ns$. Moreover, 
        \[x^Ngx^{-N}=x^{N-1}x_{-1}t^{-s}x_{1}t(t^{s-1}gt^{-(s-1)})t^{-1}x_{1}^{-1}t^{s}x_{-1}^{-1}x^{N-1}\]
        is a reduced form. Therefore, for any $h$ in $x^N(gK_m)x^{-N}=x^Ngx^{-N}K_m$, $\tau(h)=2(s+1)+2(N-1)\tau(x)$, so that the cosets $\{x^Ngx^{-N}K_m\}_{N\in \nn}$ are mutually disjoint. It implies that $\ccc{L}{gK_m}=\infty$. If $s<0$ the same occurs with $x=x_{1}t^{-s}x_{-1}t^{s}$.
        \item[{\boldmath\bfseries If $g=g_1t^{\epsilon_1}\cdots g_nt^{\epsilon_n}g_{n+1}$ with $n\geq 1$ and $\epsilon_1=-\epsilon_n$.}] With $x=x_{-\epsilon_1}t^{-\epsilon_1}x_{\epsilon_1}t^{\epsilon_1}$ similar arguments imply that $\{x^Ngx^{-N}K_m\}_{N\in \nn}$ are mutually disjoint and therefore $\ccc{L}{gK_m}=\infty$.
        \item[ {\boldmath\bfseries If $g=g_1t^{\epsilon_1}\cdots g_nt^{\epsilon_n}g_{n+1}$ with $n\geq 1$ and $\epsilon_1=\epsilon_n$.}] If $g_1\in K_{\epsilon_1}$, then set $x=t^{\epsilon_1}x_{-\epsilon_1}t^{-\epsilon_1}x_{\epsilon_1}$, and if $g_1\notin K_{\epsilon_1}$, set $x=x_{\epsilon_1}t^{\epsilon_1}x_{-\epsilon_1}t^{-\epsilon_1}$. In both cases, $\{x^Ngx^{-N}K_m\}_{N\in \nn}$ are mutually disjoint and therefore $\ccc{L}{gK_m}=\infty$.
    \end{description}
    \end{proof}
    This method applies in particular to the profinite completions of Baumslag-Solitar groups, whose $\Cstar$-simplicity was proved by Raum in \cite{Raum2021Erratum}, the factoriality of their von Neumann algebra being implicit.
    \begin{Ex}[\cite{Raum2019Csimplicity,Raum2021Erratum}] Let $2\leq \abs{m}<n$ be two integers. Then the \emph{Baumslag-Solitar} group $\operatorname{BS}(m,n)$ is the discrete $\operatorname{HNN}$ extension $\HNN(\langle a\rangle ,\langle a^m\rangle,\langle a^n\rangle,\theta)$ where $\theta(a^m)=a^n$. By Bass-Serre theory, such a group acts on a $(\abs{m}+n)$-regular tree. Let $G(m,n)\doteq \overline{\operatorname{BS}(m,n)}\leq\Aut(\T_{\abs{m}+n})$ be the closure of such a Baumslag-Solitar group in the automorphism group topology. Then $G(m,n)$ is the $\operatorname{HNN}$ extension corresponding to $\overline{\langle a \rangle},\overline{\langle a^m \rangle},\overline{\langle a^n \rangle},\overline{\theta}$, where $\overline{\theta}$ is the natural extension of $\theta$. Therefore, $G(m,n)$ satisfies the assumptions of the previous proposition, as $K_l=\overline{\langle a^{m^ln^l}\rangle}$ effectively defines a basis at the identity in compact open subgroups. By \Cref{prop:factoriality result for HNN extensions}, $L(G(m,n))$ is a factor, of type $\III_{\abs{\frac{m}{n}}}$ by \Cref{cor:non typeI factors}. These factors are non-amenable, by \cite[Theorem 9.2]{Raum2019Csimplicity}.
    \end{Ex}

    \subsection{Application 3: Neretin Groups}\label{subsection:Neretin}

    Let $d,k\in \nn^*$, and denote by $\Tdk$ the rooted tree such that the root $v_0$ has degree $k$, and the other vertices have degree $d$. For the rest of this section, we fix an embedding of this tree in the plane that induces an order on the descendants of any vertex of $\Tdk$. Denote by $h:\V(\Tdk)\to \nn$ the \emph{height} function that assigns its distance from the root to each vertex. Denote by $\V^{(n)}(\Tdk)$ the set of vertices of height $n$. Given $v\in \V(\Tdk)$, we denote by $\Tdk^v$ the subtree of $\Tdk$ consisting of descendants of $v$, and by $\Tdk^{\leq n}$ the finite subtree of $\Tdk$ obtained by keeping only vertices (and edges between them) at distance at most $n$ from the root. If $v\neq v_0$, then $\Tdk^v$ is isomorphic to $\Tdd$, the isomorphism being canonical once an embedding of each tree is fixed.
    
    The boundary $\bTdk$ of the tree is the set of infinite geodesics (viewed as non-backtracking sequences of adjacent vertices) rooted in $v_0$. Given $\xi,\xi'\in \bTdk$, let $h(\xi,\xi')\doteq \sup \{h(v)\mid v\in \xi\cap\xi'\}$. We define the visual distance on $\bTdk$ by setting $\delta(\xi,\xi')=d^{-h(\xi,\xi')}$. With this distance, $\bTdk$ is a Cantor set of unit diameter. Given $v\in \V(\Tdk)$, let $B^v\subset \bTdk$ be the set of $\xi\in \bTdk$ such that $v\in \xi$. Note that $B^v$ is the clopen ball of $\bTdk$ consisting of elements at distance at most $d^{-h(v)}$ of any of its elements. Denote by $P^{(n)}=\{B^v\mid v\in \V^{(n)}(\Tdk)\}$ the recovering of $\bTdk$ in such balls of radius $d^{-n}$.

    We say that an element $\phi\in \Homeo(\bTdk)$ is a \emph{local similarity} if there exists two partitions $\bTdk=\bigsqcup_{i=1}^nB^{v_i}$, $\bTdk=\bigsqcup_{i=1}^nB^{w_i}$, such that for every $i\in\{1,\dots, n\}$, $\phi|_{ B^{v_i}}$ is a similarity onto $B^{w_i}$ (in symbols, for every $\xi,\xi'\in B^{v_i}$, $\phi(\xi)$ and $\phi(\xi')$ both belong to $B^{w_i}$ and $\delta(\phi(\xi),\phi(\xi'))=d^{h(v_i)-h(w_i)}\delta(\xi, \xi')$). A local similarity is a \emph{local isometry} if $h(v_i)=h(w_i)$ for every $i\in \{1,\dots,n\}$.

    \begin{Def}[{\textmd{\cite{Neretin1992Combinatorial}}}]
        The \emph{Neretin group} $\Ndk$ is the group of local similarities of $\bTdk$. 
    \end{Def}

    Given $G\leq\Homeo(\bTdk)$ a group of homeomorphisms of the boundary and $A\subset \bTdk$, we consider the following subgroups of $G$:
    \begin{align*}
        \Fix_G(A)&\doteq \{g\in G\mid \forall x\in A, gx=x\}\\
        \Stab_G(A)&\doteq \{g\in G\mid \forall x\in A, gx\in A\}\\
        \Isom_G(A)&\doteq \{g\in G\mid g|_{ A} \text{ is an isometry }\}
    \end{align*}
    We introduce the following subgroups of the Neretin group: 
    \begin{align*}
        \Odk&\doteq\{g\in \Ndk\mid g\text{ is a local isometry}\}\\
        \Kdk&\doteq\Isom_{\Ndk}(\bTdk)\\
        \Kdkn{n}&\doteq\bigcap_{B\in P^{(n)}}\Stab_{\Kdk}(B)\\
        \Odkn{n}&\doteq\bigcap_{B\in P^{(n)}}\Isom_{\Odk}(B)
    \end{align*}
    Note that $\Odkn{n}$ is the normalizer of $\Kdkn{n}$ in $\Ndk$, and that $\ind{\Odkn{n}}{\Kdkn{n}}=\abs{\V^{(n)}(\Tdk)}!$.

    An equivalent description of the Neretin group consists in realizing local similarities as equivalence classes of almost automorphisms of the tree itself, an almost automorphism being a triplet $(\phi, \T_1, \T_2)$ where $\T_1$ and $\T_2$ are complete finite subtrees of $\Tdk$, and $\phi:\Tdk\setminus \T_1\to \Tdk\setminus \T_2$ is an isomorphism of forest. Under this identification, it is clear that $\Kdk$ corresponds to $\Fix_{\Aut(\Tdk)}(v_0)$ and therefore is endowed with its natural profinite topology coming from its rooted action on $\Tdk$. For the moment, it is the only group to be topologized. However, $\Kdk$ is a commensurated subgroup of $\Ndk$; in that respect, there exists a unique locally compact totally disconnected topology on $\Ndk$ such that the inclusion $\Kdk\hookrightarrow \Ndk$ is continuous and open (\cite[Lemme 4.3]{LeBoudec2017CompactPresentability}). For this topology, $\Odkn{n}$ is compact and open (as $\Kdkn{n}$ is of finite index in $\Odkn{n}$), and $\Odk$ is unimodular and amenable because $\Odk=\bigcup_{n\geq 0}\Odkn{n}$.

    \begin{The}[\Cref{mainthm:Neretin}]\label{thm:Neretin} The regular representation $\lambda_{\Ndk}$ of a Neretin group $\Ndk$ is factorial. More precisely, 
    \[\L(\Odk)'\cap\L(\Ndk)=\cc.\]
    As a consequence, $\L(\Odk)$ is the hyperfinite $\II_{\infty}$ factor and $\L(\Ndk)$ is a type $\II_{\infty}$ factor.
    \end{The}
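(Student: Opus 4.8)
The plan is to derive the theorem from \Cref{the:irreducible inclusion for groups with etoile_H} applied to the pair $H=\Odk\lneq G=\Ndk$: once $\Ndk$ is shown to satisfy property $\etoile{\Odk}$, the equality $\L(\Odk)'\cap\L(\Ndk)=\cc$ is immediate, and the type assertions follow from \Cref{cor:non typeI factors} and \Cref{cor:type of L(G)}. So the whole task is to check the two clauses of $\etoile{\Odk}$ (throughout I assume $d\geq 3$, which is what makes $\bTdk$ infinite and $\Ndk$ non-discrete). Clause~1, $\Odk\leq\ker\Delta_{\Ndk}$, holds because $\Odk=\bigcup_{n\geq 0}\Odkn{n}$ is an increasing union of compact subgroups, on each of which the modular function is trivial. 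For clause~2 I would take the nested neighbourhood basis $K_n\doteq\Kdkn{n}$; recall that $\Kdkn{n}$ is the pointwise stabiliser in $\Kdk$ of the finite subtree $\Tdk^{\leq n}$, that these are compact open subgroups forming a neighbourhood basis at the identity with $\bigcap_n\Kdkn{n}=\{e\}$, and that $\nmlz{\Odk}{\Kdkn{n}}=\Odkn{n}$ because $\Odkn{n}$ is the normaliser of $\Kdkn{n}$ in $\Ndk$.

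A preliminary computation: normalising $\mu_{\Ndk}$ so that $\mu_{\Ndk}(\Kdk)=1$, one has $\mu_{\Ndk}(\Kdkn{n})^{-1}=\ind{\Kdk}{\Kdkn{n}}=\abs{\Aut(\Tdk^{\leq n})}=k!\prod_{j=1}^{n-1}\big((d-1)!\big)^{k(d-1)^{j-1}}$, hence
\[\log\mu_{\Ndk}(\Kdkn{n})^{-1}=\log(k!)+\log\big((d-1)!\big)\cdot k\,\frac{(d-1)^{n-1}-1}{d-2}=\Theta\big((d-1)^{n}\big).\]
In other words $\mu_{\Ndk}(\Kdkn{n})$ decays only simply exponentially in $n$; this is the quantity clause~2 must outrun.

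The core of the argument is a lower bound on $\ccc{\Odk}{g\Kdkn{n}}$ for a fixed $g\in\Ndk\setminus\{e\}$. Since $g$ is not the identity homeomorphism of $\bTdk$, continuity furnishes a clopen ball $B^{v}$, with $v$ of some height $m_0\geq 1$, such that $g(B^{v})\cap B^{v}=\varnothing$. Because $g$ is a local similarity, for every $n$ exceeding the heights occurring in a defining partition of $g$ (and $n\geq m_0$) the map $g$ sends each of the $s\doteq(d-1)^{n-m_0}$ level-$n$ sub-balls $B^{v_1},\dots,B^{v_s}$ of $B^{v}$, as a similarity, onto a clopen ball $B^{w_i}$; moreover $B^{w_1},\dots,B^{w_s}$ are pairwise disjoint and disjoint from $B^{v}$, as $g$ is injective and $g(B^{v})\cap B^{v}=\varnothing$. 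For $\sigma\in\Sym_s$ let $h_\sigma\in\Odkn{n}$ be the local isometry permuting $B^{v_1},\dots,B^{v_s}$ according to $\sigma$ via the canonical isometries and restricting to the identity on $\bTdk\setminus B^{v}$. One checks that $h_\sigma g h_\sigma^{-1}$ carries $B^{v_i}$ onto $B^{w_{\sigma^{-1}(i)}}$. Since every element of $\Kdkn{n}$ fixes each level-$n$ ball setwise, the coset $x\Kdkn{n}$ of any $x\in\Ndk$ determines the assignment $B^{v_i}\mapsto x(B^{v_i})$; applied to $x=h_\sigma g h_\sigma^{-1}$ this recovers $i\mapsto w_{\sigma^{-1}(i)}$, and hence $\sigma$, since $j\mapsto w_j$ is injective. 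Therefore $\sigma\mapsto h_\sigma g h_\sigma^{-1}\Kdkn{n}$ is injective and, $h_\sigma$ normalising $\Kdkn{n}$, one gets $\ccc{\Odk}{g\Kdkn{n}}\geq s!=\big((d-1)^{n-m_0}\big)!$ for all large $n$.

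Combining the two bounds, $\ccc{\Odk}{g\Kdkn{n}}\,\mu_{\Ndk}(\Kdkn{n})^{2}\geq\big((d-1)^{n-m_0}\big)!\,/\,\abs{\Aut(\Tdk^{\leq n})}^{2}$, and by Stirling the logarithm of the right-hand side is at least $(d-1)^{n-m_0}\big((n-m_0)\log(d-1)-1\big)-\Theta\big((d-1)^{n}\big)$, which tends to $+\infty$: the factorial grows like $n\,(d-1)^{n-m_0}\log(d-1)$, beating $\log\abs{\Aut(\Tdk^{\leq n})}^{2}=\Theta\big((d-1)^{n}\big)$ by the extra factor $n$. This verifies clause~2, so \Cref{the:irreducible inclusion for groups with etoile_H} gives $\L(\Odk)'\cap\L(\Ndk)=\cc$. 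For the types: $\Odk$ and $\Ndk$ are unimodular and non-discrete, so their Plancherel weights are infinite faithful normal semifinite traces, making $\L(\Odk)$ and $\L(\Ndk)$ semifinite factors with no finite trace; by \Cref{cor:non typeI factors} neither is of type $\I$, so both are of type $\II_{\infty}$, and since $\Odk$ is amenable $\L(\Odk)$ is amenable, hence the hyperfinite $\II_{\infty}$ factor. I expect the step in the third paragraph to be the main obstacle: it hinges on combining the fact that a nontrivial local similarity moving a ball off itself sends small balls bijectively onto balls, with the fact that $\Kdkn{n}$ is exactly the pointwise stabiliser of $\Tdk^{\leq n}$, so that reduction modulo $\Kdkn{n}$ still remembers the setwise image of every level-$n$ ball.
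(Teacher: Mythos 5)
Your proposal is correct and follows essentially the same route as the paper: verify property $\etoile{\Odk}$ for the basis $(\Kdkn{n})_{n}$ by moving a small ball $B^{v}$ off itself and conjugating $g$ by local isometries supported on $B^{v}$, then beat $\mu(\Kdkn{n})^{-2}$ (which grows like $\abs{\Aut(\Tdkn)}^2$, i.e.\ simply exponentially in the number of level-$n$ vertices) by the factorial of the number of level-$n$ sub-balls of $B^{v}$. The only substantive difference is in the counting step --- you get $\ccc{\Odk}{g\Kdkn{n}}\geq s!$ directly by distinguishing the conjugates $h_{\sigma}gh_{\sigma}^{-1}\Kdkn{n}$ through the images of the level-$n$ sub-balls, whereas the paper bounds the orbit size below by the index of rigid stabilizers $\ind{\rist_{\Odkn{n}}(B^{v})}{\rist_{\Kdk}(B^{v})}$ via a metric-distortion argument --- and both give the same asymptotics; note only that the paper's convention takes $d$ to be the branching number (so its counts read $d^{n-n_0}$ where yours read $(d-1)^{n-m_0}$, and $d\geq 2$ suffices).
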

    \begin{proof}
        Let $\mu$ be a left Haar measure on $\Ndk$, normalized so that $\mu(\Kdk)=1$. Recall that $(\Kdkn{n})_{n\geq 0}$ is a basis at the identity of $\Ndk$ in compact open subgroups. We will show that $\Ndk$ has property $\etoile{\Odk}$ with respect to this basis. To this end, we introduce the following notion: given a subgroup $G\leq\Homeo(\bTdk)$ and $A\subset \bTdk$, the \emph{rigid stabilizer} of $A$ in $G$ is the following subgroup: 
        \[\rist_G(A)\doteq \Fix_{G}(\bTdk\setminus A).\]

        Let $g\in \Ndk\setminus \{e\}$. There exists a ball $B^{w}\subset \bTdk$ such that $g(B^{w})\neq B^{w}$, and such that $g|_{ B^{w}}$ is a similarity with ratio $\alpha >0$. Up to taking a smaller ball inside $B^{w}$, we may assume that $g(B^{w})\cap B^{w}=\emptyset$. Let $n_0=h({w})$. Define for $n\geq n_0$ the following subgroups of $\Odkn{n}$, where the isomorphisms are obtained by identifying $B^{w}$ and $\Tdd$: 
        \begin{align*}
            H^{(w,n)}&=\rist_{\Odkn{n}}(B^{w})\simeq \O_{d,d}^{(n-n_0)}\\
            L^{w}&=\rist_{\Kdk}(B^{w})\simeq K_{d,d}
        \end{align*}
        The group $H^{(w,n)}$ is the \emph{rigid stabilizer} of $B^{w}$ in $\Odkn{n}$, and $L^{w}$ is the rigid stabilizer of $B^{w}$ in $\Kdk$.
        Let $h\in H^{(w,n)}$ such that $h\notin L^{w}$ and pick $\xi_1,\xi_2\in B^{w}$ such that the distance between $\xi_1$ and $\xi_2$ is not preserved by $h^{-1}$. Then
        \begin{align*}
        &\delta(hgh^{-1}\xi_1,hgh^{-1}\xi_2)
        =\delta(gh^{-1}\xi_1,gh^{-1}\xi_2)\\
        &=\alpha \delta(h^{-1}\xi_1,h^{-1}\xi_2)\\
        &\neq \alpha \delta(\xi_1,\xi_2). 
        \end{align*}
        Therefore, $hgh^{-1}\Kdkn{n}\cap g\Kdkn{n}=\emptyset$ so that $\ccc{\Odk}{g\Kdkn{n}}\geq \ind{H^{(w,n)}}{L^{w}}$.

        Note that $\mu(\Kdkn{n})=\ind{\Kdk}{\Kdkn{n}}^{-1}$: we therefore establish the estimates necessary to ensure that $\left(\ind{H^{(w,n)}}{L^{w}}\ind{\Kdk}{\Kdkn{n}}^{-2}\right)_{n\geq n_0}$ tends to infinity. 

        \begin{itemize}
            \item Recall that $H^{(w,n)}/L^{w}$ is isomorphic to $\O_{d,d}^{(n-n_0)}/K_{d,d}$. Therefore, 
            \[\ind{H^{(w,n)}}{L^{w}}=\ind{\O_{d,d}^{(n-n_0)}}{K_{d,d}}=\frac{\ind{\O_{d,d}^{(n-n_0)}}{K_{d,d}^{(n-n_0)}}}{\ind{K_{d,d}}{K_{d,d}^{(n-n_0)}}}.\]
            \item $\Odkn{n}/\Kdkn{n}$ is the group of permutations of $\V^{(n)}(\Tdk)$. Therefore, $\ind{\Odkn{n}}{\Kdkn{n}}=\abs{V^{(n)}(\Tdk)}!$, whence 
            \[ \ind{\O_{d,d}^{(n-n_0)}}{K_{d,d}^{(n-n_0)}} = \abs{\V^{(n-n_0)}(\T_{d,d})}! = (d^{n-n_0})!\geq \left(\frac{d^{n-n_0}}{e}\right)^{d^{n-n_0}}.\]
            \item $\Kdk/\Kdkn{n}$ acts faithfully on the finite tree $\Tdkn$. Then
            \[\ind{\Kdk}{\Kdkn{n}}\leq \abs{\Aut(\Tdkn)}=k!\prod_{i=1}^{n-1}(d!)^{\abs{\V^{(i)}(\Tdk)}}=k!(d!)^{k\sum_{i=0}^{n-2}d^i} \leq k!(d!)^{kd^n}.\]
        \end{itemize}
        Finally, for $n\geq n_0$,
        \begin{align*}
            \ccc{\Odk}{g\Kdkn{n}}\mu(\Kdkn{n})^2
            \geq \frac{\ind{\O_{d,d}^{(n-n_0)}}{K_{d,d}^{(n-n_0)}}}{\ind{K_{d,d}}{K_{d,d}^{(n-n_0)}}}\abs{\Kdk/\Kdkn{n}}^{-2}
            \geq \frac{\left(\frac{d^{{n-n_0}}}{e}\right)^{d^{n-n_0}}}{(d!)^{d^{n-n_0}}}\frac{1}{k!(d!)^{2kd^n}}\tend{n}{\infty}\infty.
        \end{align*}
    Therefore, given any nontrivial group element $g$, $\ccc{\Odk}{g\Kdkn{n}}\mu(\Kdkn{n})^2\to \infty$, and by \Cref{the:irreducible inclusion for groups with etoile_H}, $\L(\Odk)'\cap \L(\Ndk)=\cc$. By \Cref{cor:non typeI factors}, $\L(\Odk)$ and $\L(\Ndk)$ are type $\II_{\infty}$ factors. As $\Odk$ is amenable, $\L(\Odk)$ is the hyperfinite $\II_{\infty}$ factor.
    \end{proof}

\begin{Rem}
    \begin{enumerate}
        \item\label{itm:genNer2} The proof of \Cref{thm:Neretin} carries over verbatim to the case of the natural generalization of the Neretin groups introduced in \cite[Section 7]{LeBoudec2017CompactPresentability} under the name of \emph{almost automorphism groups associated with closed regular branch groups}.
        \item\label{itm:genNer3} Another natural generalization of Neretin groups is provided by the notion of \emph{topological full group} associated with an action $G\acts \partial\T$ of a profinite branch group, defined in \cite[Section 2, Section 6.2]{GarridoReid2025LocallyCompactPiecewiseFullGroups}. Likewise, the proof of \Cref{thm:Neretin} applies to these locally compact groups, therefore providing other examples of totally disconnected locally compact groups with factorial regular representations. 
        \item Let $F\leq \Sym_d$, such that $F\acts\{1,\dots,d\}$ is non-free. The \emph{coloured Neretin group} $\N_F$ associated to $F$, defined in \cite{Lederle2019ColouredNeretin}, is the piecewise full group associated to the action of the Burger-Mozes group $U(F)$ on the boundary of $\T_d$. When $F$ is transitive, $\N_F$ is one of the groups defined in items \ref{itm:genNer2} and \ref{itm:genNer3}, and therefore $\L(\N_F)$ is a factor. But the estimates in the proof of \Cref{thm:Neretin} can easily be adapted to show that, whenever the only fixed point of $\Fix_F({i})$ is $\{i\}$ for every $i\in \{1,\dots,d\}$, $\L(\N_F)$ is a factor. Indeed, this technical hypothesis ensures that the size of the smallest orbit of the action of the rigid stabilizer of $B^v$ on $\T_{d,k}^{v,(n)}$ grows at least linearly in $n$. This allows us to recover the asymptotics used to obtain $\etoile{\Odk}$.
        \item While the factor $\L(\Odk)$ is entirely determined by its amenability, we do not know much about $\L(\Ndk)$ itself. It will be interesting to know if $\L(\Ndk)$ is amenable.
    \end{enumerate}
\end{Rem}

\section{Factoriality Criterion for Crossed Products}\label{section:Factoriality Criterion for Crossed Products}
    \subsection{Preliminaries}\label{subsection:crossed products preliminaries}
        \paragraph{Von Neumann algebras. } In this section, we will consider von Neumann algebras as abstract operator algebras.  Consequently, given $M$ a von Neumann algebra, we denote by $(M_*,\norm{\cdot}_*)$ its predual, by $\norm{\cdot}_M$ the dual norm on $M$, and the \emph{weak} topology on $M$ is the weak-$*$ topology associated to the duality $M=(M_{*})^*$. Given a positive state $\psi\in M^+_*$ and $x\in M$ we define $\norm{x}_{\psi}\doteq \psi(x^*x)^{\frac{1}{2}}$. The strong topology on $M$ is the topology generated by the seminorms $\{\norm{\cdot}_{\psi}\mid \psi\in M_*^+\}$. We write the representations explicitly when we want our von Neumann algebras to act on Hilbert spaces.

        \paragraph{Standard form. } Let $M$ be a von Neumann algebra, and denote by $M^{\op}$ its opposite von Neumann algebra, that is the von Neumann algebra with the same elements and adjoint operation as in $M$, but with the reversed product $x\cdot_{\op}y\doteq yx$. Following \cite{Haagerup1975StandardForm}, we denote by $(M,\Ldeux M, J,P)$ the \emph{standard form} of $M$, and by $\lambda_M:M\to \B(\Ldeux M)$ the associated left regular representation. We define its right regular representation $\rho_M:M^{\op}\to \B(\Ldeux M)$ by $\rho_M(x)=J\lambda_M(x^*) J$, so that $J\lambda_M(M)J=\rho_M(M^{\op})=\lambda_M(M)'$. For example, for $G$ a locally compact group, the standard form  of $\L(G)$ is given by $(\L(G),\Ldeux(G,\mu_G),J,P)$, where 
        \[J:\Ldeux G\ni\xi\mapsto\left(h\mapsto \Delta_G(h^{-1})^{\frac{1}{2}}\overline{\xi(h^{-1})}\right)\in \Ldeux G,\] 
        $\lambda_{\L(G)}(u_g)=\lambda_g$, $\rho_{\L(G)}(u_g)=\rho_{g^{-1}}$ and $P=\overline{\{ \xi\ast (J\xi)\mid \xi \in C_c(G)\}}$, where $C_c(G)$ denotes the set of continuous functions with compact support on $G$.

        \paragraph{Automorphisms groups, actions, and unitary implementations. } The \emph{automorphism group of $M$}, denoted by $\Aut(M)$, acts on $M_*$ by $\theta(\omega)=\omega\circ\theta$ for $\theta\in\Aut(M)$ and $\omega\in M_*$. It is endowed with the \emph{$u$-topology} which is the topology of pointwise norm convergence on $M_*$: a net $(\theta_i)_{i\in I}$ converges to $\theta$ if and only if for all $\omega\in M_*$, $\norm{\theta_i(\omega)-\theta(\omega)}\to 0$.
        Given $\theta\in \Aut(M)$ there exists a unique unitary $v_{\theta}\in \B(\Ldeux M)$ such that for all $x\in M$, $\theta(x)=v_{\theta} xv_{\theta}^*$: it is called the \emph{unitary implementation} of $\theta$. Given $G$ a locally compact group, a continuous action $\alpha:G\acts M$ is a continuous morphism $\alpha:G\to \Aut(M)$ for the $u$-topology on $\Aut(M)$. Once the action is fixed, we denote by $v_g\in \U(\Ldeux M)$ the unitary implementation of $\alpha_g$.

        \paragraph{Smoothing map.} Let $\alpha:G\to M$ be a continuous action of a locally compact group on a von Neumann algebra and $K$ be a compact open subgroup of $G$. By continuity of the action and given $\omega\in M_*$, the application $K\ni g\mapsto \alpha_g(\omega)\in (M_*,\norm{\cdot}_*)$ is continuous and therefore Bochner integrable. We introduce
        \[ \widetilde{\alpha_K}:M_*\ni \omega \mapsto \frac{1}{\mu_G(K)}\int_K\alpha_k(\omega)d\mu_G(k)\in (M_*,\norm{\cdot}_*)\]
        the \emph{smoothing map on $M_*$}, and its dual \emph{smoothing map on $M$} 
        \[ \alpha_K:M\ni x\mapsto (\omega\mapsto \widetilde{\alpha_K}(\omega)(x))\in (M_*)^*=M.\]
        By construction, $\alpha_K$ is weakly continuous.

        \paragraph{Crossed products.} Given an action $\alpha:G\acts M$, we define the following representations of $G$ on $\H\doteq\Ldeux M\otimes\Ldeux G\simeq \Ldeux (G;\Ldeux M)$: 
        \begin{align*}
             \1\otimes\lambda_G: G &\to \B(\H),& (\1\otimes\lambda_G)(g)\xi(s)&=\xi(g^{-1}s) &\xi&\in \H,g\in G, s\in G,\\
             \rho_G^{\alpha}:G &\to \B(\H),& \rho_G^{\alpha}(g)\xi(s)&=v_g\xi(sg) &\xi&\in \H,g\in G, s\in G.
        \intertext{We also define the following representations of $M$ and $M^{\op}$ on the same space:}
            \lambda_M^{\alpha}:M&\to \B(\H),& \lambda_M^{\alpha}(a)\xi(s)&=\alpha_{s^{-1}}(a)\xi(s)&\xi&\in \H,a\in M, s\in G, \\
            \rho_M\otimes \1:M^{\op}&\to \B(\H),& (\rho_M\otimes \1)(a)\xi(s)&=\rho_M(a)\xi(s)&\xi&\in \H,a\in M^{\op}, s\in G.
        \end{align*}

        The \emph{crossed product} associated to the action $\alpha:G\acts M$ is the von Neumann algebra generated by a copy of $M$ and a family of unitaries $\{u_g\mid g\in G\}$, verifying $u_gxu_g^*=\alpha_g(x)$ for $g\in G$ and $x\in M$, that admits a faithful representation $\lambda_{M\rtimes G}:M\rtimes_{\alpha} G\to \B(\Ldeux M \pt \Ldeux G)$ such that $\lambda_{M\rtimes G}(u_g)=(\1\otimes\lambda_G)(g)$ and $\lambda_{M\rtimes G}(x)=\lambda_M^{\alpha}(x)$. This notation is coherent, as $(\H,\lambda_{M\rtimes G})$ is indeed a standard representation of $M\rtimes_{\alpha}G$. By \cite[X.1.21]{Takesaki2003TheoryOperator},
        \begin{equation*}\label{eq:commutation relation for crossed products}\tag{1}
         \left((\1\otimes\lambda_G)(G)\cup \lambda_M^{\alpha}(M)\right)'=\left(\rho_G^{\alpha}(G)\cup (\rho_M\otimes \1)(M^{\op})\right)''.
        \end{equation*}

    \subsection{Fourier Coefficients on Crossed Products}\label{subsection:crossed products Fourier Coeffs}
        In this section, we adapt the notion of Fourier coefficient on group von Neumann algebras to crossed products by locally compact groups. For any Borel subset $A\subset G$ with non-zero finite Haar measure, $\xi_A\doteq \1_A/{\mu_G(A)}^{1/2}$ still denote the normalized indicator function of $A$ in $\Ldeux(G,\mu_G)$. Let $\alpha:G\acts M$ be a continuous action of a locally compact group on a von Neumann algebra. 

        \begin{Def}\label{def:fourier coefficients for crossed products}
             Let $x\in M\rtimes_{\alpha} G$, $K$ be a compact open subgroup of $G$ and $g\in G$.  We define the \emph{Fourier coefficient of $x$ at the coset $gK$} as the unique element $\fc{x}{gK}$ of $M$ such that given $\xi, \eta \in \Ldeux(M)$, 
        \[ \ps{\lambda_M(\fc{x}{gK})\xi}{\eta}=\ps{\lambda_{M\rtimes G}(x)\xi\otimes \xi_K}{(\1\otimes\lambda_G)(g)\eta\otimes \xi_K}.\]
        We therefore consider $\fc{x}{}$ as a function from  $\lcocosets{G}$ to $M$.
    \end{Def}
    \begin{proof}[Proof that $\fc{x}{gK}$ is well defined.] It is clear that $\fc{x}{gK}$ can be defined as an operator in $\B(\Ldeux M)$. However, we have to check that $\fc{x}{gK}$ indeed belongs to $\lambda_M(M)$ and can therefore be defined as an element of $M$ itself. Given $b\in M$, 
            \begin{align*}
                \ps{\fc{x}{gK}\rho_M(b)\xi }{\eta}
                &=\ps{\lambda_{M\rtimes G}(x)(\rho_M\otimes 1)(b)\xi\otimes \xi_K}{(\1\otimes\lambda_G)(g)\eta\otimes \xi_K}\\
                &=\ps{(\rho_M\otimes 1)(b)\lambda_{M\rtimes G}(x)\xi\otimes \xi_K}{(\1\otimes\lambda_G)(g)\eta\otimes \xi_K}\\
                &=\ps{\lambda_{M\rtimes G}(x)\xi \otimes \xi_K}{(\rho_M\otimes 1)(b^*)(\1\otimes\lambda_G)(g)\eta\otimes \xi_K}\\
                &=\ps{\fc{x}{gK}\xi}{\rho_M(b^*)\eta}=\ps{\rho_M(b)\fc{x}{gK}\xi}{\eta}.
            \end{align*}
            Therefore, $\fc{x}{gK}$ commutes with $\rho_M(M)$, and can be identified with the element of $M$ characterized by
             \[ \ps{\lambda_M(\fc{x}{gK})\xi}{\eta}=\ps{\lambda_{M\rtimes G}(x)\xi\otimes \xi_K}{(\1\pt\lambda_G)(g)\eta\pt\xi_K}.\]
        \end{proof}

        We shall now establish the crossed-product versions of \Cref{lem:continuity properties of Fourier coefficients for groups}, \cref{lem:monotonicity of phi/mu} and \cref{lem:caracterization of scalar elements by Fourier coefficients}.

    \begin{Prop}\label{lem:continuity properties of Fourier coefficients for groups} Let $x\in M\rtimes_{\alpha} G$, $K$ be a compact open subgroup of $G$ and $g\in G$. Then 
        \begin{enumerate}
            \item \label{itm:fc.cp.1}$\fc{x}{gK}$ is $K$-invariant. 
            \item \label{itm:fc.cp.2} Let $a\in M$. Then $\fc{a}{K}=\alpha_{K}(a)$, and if $g\notin K$, then $\fc{a}{gK}=0$.
            \item \label{itm:fc.cp.3}$\fc{x}{}:\lcocosets{G}\ni gK\mapsto \fc{x}{gK}\in (M, \norm{\cdot}_M)$ is bounded by $\norm{x}_{M\rtimes G}$.
            \item \label{itm:fc.cp.4}$M\rtimes_{\alpha}G\ni x\mapsto \fc{x}{gK} \in M$ is linear and weakly continuous.
        \end{enumerate}
    \end{Prop}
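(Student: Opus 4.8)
The plan is to derive all four statements from the defining relation
\[\ps{\lambda_M(\fc{x}{gK})\xi}{\eta}=\ps{\lambda_{M\rtimes G}(x)\,\xi\otimes \xi_K}{(\1\otimes\lambda_G)(g)\,\eta\otimes \xi_K}\qquad(\xi,\eta\in\Ldeux M)\]
combined with the fact that, since $\lambda_M$ is a faithful normal representation, an element of $M$ is determined by the normal functionals $\psi_{\xi,\eta}\colon y\mapsto\ps{\lambda_M(y)\xi}{\eta}$, and these have norm-dense span in $M_*$. I would treat the easy items \ref{itm:fc.cp.3} and \ref{itm:fc.cp.4} first, then the delicate item \ref{itm:fc.cp.1}, then the computational item \ref{itm:fc.cp.2}.

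For item \ref{itm:fc.cp.3} I would simply note that $\xi_K$ is a unit vector, so $\|\xi\otimes\xi_K\|=\|\xi\|$ and $\|(\1\otimes\lambda_G)(g)\,\eta\otimes\xi_K\|=\|\eta\|$, and apply Cauchy--Schwarz to the right-hand side of the defining relation to get $\|\fc{x}{gK}\|_M\le\|x\|_{M\rtimes G}$. For item \ref{itm:fc.cp.4}, linearity is immediate from the defining relation; for weak continuity, observe that for fixed $\xi,\eta$ the map $x\mapsto\psi_{\xi,\eta}(\fc{x}{gK})$ is, by the defining relation, a vector functional of the normal representation $\lambda_{M\rtimes G}$ and hence lies in $(M\rtimes_\alpha G)_*$; since $\fc{\cdot}{gK}$ is bounded by item \ref{itm:fc.cp.3} and the $\psi_{\xi,\eta}$ span a norm-dense subspace of $M_*$, it follows that $\omega\circ\fc{\cdot}{gK}\in(M\rtimes_\alpha G)_*$ for every $\omega\in M_*$, which is exactly weak-weak continuity.

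Item \ref{itm:fc.cp.1} is where the commutation relation \eqref{eq:commutation relation for crossed products} will be needed. First I would establish the covariance identity $\rho_G^{\alpha}(k)(\xi\otimes\xi_K)=(v_k\xi)\otimes\xi_K$ for $k\in K$, which follows at once from $\xi_K(sk)=\xi_K(s)$; equivalently $(v_k^*\xi)\otimes\xi_K=\rho_G^{\alpha}(k)^*(\xi\otimes\xi_K)$. Using $\lambda_M(\alpha_k(m))=v_k\lambda_M(m)v_k^*$, I would substitute $v_k^*\xi,\ v_k^*\eta$ in the defining relation, which turns $\ps{\lambda_M(\alpha_k(\fc{x}{gK}))\xi}{\eta}$ into $\ps{\lambda_{M\rtimes G}(x)\rho_G^{\alpha}(k)^*(\xi\otimes\xi_K)}{(\1\otimes\lambda_G)(g)\rho_G^{\alpha}(k)^*(\eta\otimes\xi_K)}$; then, since $\rho_G^{\alpha}(k)$ commutes with $\lambda_{M\rtimes G}(x)$ by \eqref{eq:commutation relation for crossed products} and with $(\1\otimes\lambda_G)(g)$ because left and twisted-right translations commute, I can pull both copies of $\rho_G^{\alpha}(k)^*$ out and cancel the unitary, recovering $\ps{\lambda_M(\fc{x}{gK})\xi}{\eta}$ and hence $\alpha_k(\fc{x}{gK})=\fc{x}{gK}$.

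For item \ref{itm:fc.cp.2} I would specialize $x=a\in M$, so that $\lambda_{M\rtimes G}(a)=\lambda_M^{\alpha}(a)$ acts fibrewise by $s\mapsto\lambda_M(\alpha_{s^{-1}}(a))$; writing out the inner product in $\Ldeux(G;\Ldeux M)$ and using $\xi_K(s)\overline{\xi_K(g^{-1}s)}=\1_{K\cap gK}(s)/\mu_G(K)$, the coefficient vanishes for $g\notin K$ (disjoint cosets), and for $g\in K$ it becomes $\tfrac1{\mu_G(K)}\int_K\ps{\lambda_M(\alpha_k(a))\xi}{\eta}\,d\mu_G(k)$ after the substitution $k=s^{-1}$ (legitimate because $K$ is compact, so $\Delta_G|_K\equiv 1$ and $\mu_G|_K$ is inversion-invariant), which is precisely $\psi_{\xi,\eta}(\alpha_K(a))$ by definition of the smoothing map $\alpha_K$. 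I expect item \ref{itm:fc.cp.1} --- invoking \eqref{eq:commutation relation for crossed products} in the correct direction and verifying the covariance identity for $\xi\otimes\xi_K$ --- to be the only point requiring genuine care; items \ref{itm:fc.cp.2}, \ref{itm:fc.cp.3} and \ref{itm:fc.cp.4} are routine manipulations of the defining relation.
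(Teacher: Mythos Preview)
Your proposal is correct and follows essentially the same approach as the paper: the argument for item~\ref{itm:fc.cp.1} via the covariance identity $\rho_G^{\alpha}(k)(\xi\otimes\xi_K)=(v_k\xi)\otimes\xi_K$ and the commutation relation~\eqref{eq:commutation relation for crossed products} is exactly the paper's computation (the paper works with $\alpha_{k^{-1}}$ rather than $\alpha_k$, which is immaterial), and your treatment of item~\ref{itm:fc.cp.2} is the same fibrewise integral. You are in fact more thorough on items~\ref{itm:fc.cp.3} and~\ref{itm:fc.cp.4}, which the paper dismisses as ``clear from the definitions''.
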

     \begin{proof}
        \begin{enumerate}
            \item The Fourier coefficient $\fc{x}{gK}$ is $K$-invariant as given $k\in K$, 
            \begin{align*}
                \ps{\lambda_M(\alpha_{k^{-1}}(\fc{x}{gK}))\xi}{\eta}
                &=\ps{v_k^*\lambda_M(\fc{x}{gK})}v_k\xi{\eta}
                =\ps{\lambda_{M\rtimes G}(x)v_k\xi\pt\xi_K}{(\1\pt\lambda_G)(g)v_k\eta\pt \xi_{K}}\\
                &=\ps{\lambda_{M\rtimes G}(x)\rho_G^{\alpha}(k)\xi\pt\xi_K}{(\1\pt\lambda_G)(g)\rho_G^{\alpha}(k)\eta\pt \xi_{K}}\\
                &=\ps{\lambda_{M\rtimes G}(x)\xi\pt\xi_K}{(\1\pt\lambda_G)(g)\eta\pt \xi_{K}}
                =\ps{\lambda_M(\fc{x}{gK})\xi}{\eta}.
            \end{align*}
            \item  If $a\in M$, then 
    \begin{align*}
        \ps{\lambda_M(\fc{a}{gK})\xi}{\eta}
        &=\ps{\lambda_{M\rtimes G}(a)(\xi\pt\xi_K)}{(\1\pt\lambda_G)(g)\eta\pt\xi_K}
        =\ps{\lambda_{M}^{\alpha}(a)(\xi\pt\xi_K)}{\eta\pt \xi_{gK}}\\
        &=\frac{1}{\mu_G(K)}\int_{K\cap gK}\ps{\lambda_M(\alpha_{k}^{-1}(a))\xi}{\eta}d\mu_G(k)\\
        &=\begin{cases}
        \ps{\lambda_M(\alpha_K(a))\xi}{\eta} & \text{if } g\in K, \\
        0 & \text{otherwise}
    \end{cases}.
    \end{align*}
    \end{enumerate}
    The items \ref{itm:fc.cp.3} and \ref{itm:fc.cp.4} are clear from the definitions.
    \end{proof}

    \begin{Prop}\label{lem:monotonicity of phi/mu for crossed products}
        Let $x\in M\rtimes_{\alpha}G$, $g\in G$ and $K, K'$ be compact open subgroups of $G$ such that $K'\leq K$. Pick $g_1,\dots, g_n$ in $gK$ such that $gK=\bigsqcup_{i=1}^ng_iK'$. Then  
        \[ \fc{x}{gK}=\alpha_{K}\left(\sum_{i=1}^n\fc{x}{g_iK'}\right).\]
    \end{Prop}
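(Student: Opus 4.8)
The plan is to follow the template of the proof of \Cref{lem:monotonicity of phi/mu}: first prove the identity on a weakly dense $*$-subalgebra of $M\rtimes_\alpha G$, then extend to the whole crossed product by linearity and weak continuity. The relevant dense subalgebra is $\mathcal A = \vect\{a u_\gamma \mid a\in M,\ \gamma\in G\}$, which is exactly the $*$-algebra generated by $\lambda_M^\alpha(M)$ and the unitaries $\{u_\gamma\}_{\gamma\in G}$ (closed under products and adjoints via $u_\gamma a = \alpha_\gamma(a) u_\gamma$ and $u_\gamma u_\delta = u_{\gamma\delta}$), hence weakly dense in $M\rtimes_\alpha G$ by definition of the crossed product. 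Both sides of the asserted equality are linear and weakly continuous in $x$: the left-hand side by \Cref{lem:continuity properties of Fourier coefficients for groups}, item~\ref{itm:fc.cp.4}; the right-hand side because each map $x\mapsto\fc{x}{g_iK'}$ is linear and weakly continuous and $\alpha_K$ is weakly continuous by construction of the smoothing map. So it suffices to verify the identity for $x = a u_\gamma$.

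For such $x$, I would first record the Fourier coefficients of $a u_\gamma$ at an arbitrary coset $hK''$ (with $K''\co G$). A one-line unfolding of \Cref{def:fourier coefficients for crossed products} gives the translation rule $\fc{u_\gamma y}{hK''} = \fc{y}{\gamma^{-1}hK''}$, so writing $a u_\gamma = u_\gamma\alpha_{\gamma^{-1}}(a)$ and applying \Cref{lem:continuity properties of Fourier coefficients for groups}, item~\ref{itm:fc.cp.2},
\[ \fc{au_\gamma}{hK''} = \fc{\alpha_{\gamma^{-1}}(a)}{\gamma^{-1}hK''} = \begin{cases} \alpha_{K''}\bigl(\alpha_{\gamma^{-1}}(a)\bigr) & \text{if } \gamma\in hK'',\\ 0 & \text{otherwise.} \end{cases} \]
(The same dichotomy drops out of a direct computation from \Cref{def:fourier coefficients for crossed products}, using $(\1\otimes\lambda_G)(\gamma)(\xi\otimes\xi_{K''}) = \xi\otimes\xi_{\gamma K''}$, that two cosets $\gamma K''$, $hK''$ are either disjoint or equal, and left-invariance of $\mu_G$.) Specializing to $K'' = K$ and $K'' = K'$: if $\gamma\notin gK$ then $\gamma$ lies in none of the $g_iK'\subseteq gK$, and both sides of the claimed identity are zero; if $\gamma\in gK$ then exactly one index $i_0$ satisfies $\gamma\in g_{i_0}K'$, so the left-hand side equals $\alpha_K(\alpha_{\gamma^{-1}}(a))$ and the right-hand side equals $\alpha_K\bigl(\alpha_{K'}(\alpha_{\gamma^{-1}}(a))\bigr)$.

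The remaining point — and the only genuine computation — is the identity $\alpha_K\circ\alpha_{K'} = \alpha_K$ for $K'\leq K$, which is also the only place the hypothesis $K'\leq K$ enters. I would prove it on the predual side: for $\omega\in M_*$ one has $\omega\bigl((\alpha_K\circ\alpha_{K'})(b)\bigr) = \widetilde{\alpha_{K'}}\bigl(\widetilde{\alpha_K}(\omega)\bigr)(b)$, and Fubini together with the right-invariance of the Haar measure of the compact group $K$ (here $K'\subseteq K$ is used) collapses $\widetilde{\alpha_{K'}}\circ\widetilde{\alpha_K}$ to $\widetilde{\alpha_K}$. This is the same averaging mechanism responsible for the $K$-invariance in \Cref{lem:continuity properties of Fourier coefficients for groups}, item~\ref{itm:fc.cp.1}, and it explains why the extra outer $\alpha_K$ is needed here, whereas in \Cref{lem:monotonicity of phi/mu} one has $M = \cc$, the action trivial, $\alpha_K = \id$, and the statement reduces to $\fc{x}{gK} = \sum_i\fc{x}{g_iK'}$. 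I do not anticipate a real obstacle; the only thing requiring care is keeping the dual/predual pairings straight in the smoothing-map manipulations.
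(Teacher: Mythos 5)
Your proof is correct and follows essentially the same route as the paper: verify the identity on the weakly dense span of the generators $au_\gamma$ via the translation rule and the computation of $\fc{a}{gK''}$, then extend by linearity and weak continuity. The only difference is cosmetic — the paper works with $u_\gamma a$ rather than $au_\gamma$ and uses the absorption $\alpha_K\circ\alpha_{K'}=\alpha_K$ silently, whereas you isolate and justify it on the predual; that is a welcome clarification, not a new argument.
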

    \begin{proof}
            Given $a\in M$ and $\gamma\in G$,
            \begin{align*}
                \alpha_{K}\left(\sum_{i=1}^n\fc{u_{\gamma}a}{g_iK'}\right)
                &=\alpha_{K}\left(\sum_{i=1}^n\fc{u_{\gamma^{-1}g_i}^*a}{K'}\right)
                =\alpha_{K}\left(\sum_{i\mid \gamma\in g_iK'}\alpha_{K'}(a)\right)
                =\begin{cases}
                0 & \text{if } \gamma\notin gK, \\
                \alpha_K(a) & \text{if } \gamma \in gK
                \end{cases}\\
                &=\fc{u_{\gamma}a}{gK}
            \end{align*}
            By linearity of $x\mapsto \fc{x}{K}$ and $x\mapsto \alpha_K(x)$, by weak density of $\vect\{u_{\gamma} a \mid \gamma \in G, a\in M\}$ in $M\rtimes_{\alpha} G$ and by weak continuity of $x\mapsto \alpha_K(x)$ and $x\mapsto \fc{x}{K}$, we obtain the equality for every $x\in M\rtimes_{\alpha} G$.
    \end{proof}

    \begin{Prop}\label{lem:condition for nullity of an element in a crossed product}
        Assume that $G$ is totally disconnected. Then, for any $x\in M\rtimes_{\alpha}G$, 
        \[  \fc{x}{}=0 \text { if and only if } x=0.\]
        As a consequence, $x$ is in $M$ if and only if for every compact open subgroup $K$ of $G$ and for every $g\notin K$, $\fc{x}{gK}=0$. 
    \end{Prop}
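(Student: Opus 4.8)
The plan is to transport the proof of \Cref{lem:caracterization of scalar elements by Fourier coefficients} to the crossed-product setting, with the averaging projection $p_K$ replaced by
\[
\bar q_K \doteq 1\otimes p_K = \lambda_{M\rtimes G}\Big(\tfrac{1}{\mu_G(K)}\int_K u_k\,d\mu_G(k)\Big)\in M\rtimes_{\alpha}G ,
\]
a projection with range $\Ldeux M\otimes\Ldeux(K\setminus G)$. First I would record that $(\bar q_K)_{K\co G}$ converges strongly to $1$: this is immediate from \Cref{lem:sot.conv.of p_K}, since tensoring with the identity on the first leg preserves strong convergence. As the implication $x=0\Rightarrow\fc{x}{}=0$ and the reverse implication of the ``consequence'' are trivial (the latter by the second item of \Cref{lem:continuity properties of Fourier coefficients for groups}), everything reduces to showing $\fc{x}{}=0\Rightarrow x=0$, which will follow from the corner lemma: for every $K\co G$,
\[
\fc{x}{gK}=0\ \text{for all }g\in G\quad\Longleftrightarrow\quad x\,\bar q_K=0 .
\]
Granting this, $\fc{x}{}=0$ gives $x\bar q_K=0$ for all $K$, and letting $K$ shrink to $\{e\}$ yields $x=\lim_K x\bar q_K=0$.

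For the corner lemma, $(\Leftarrow)$ is immediate since $\xi_K=p_K\xi_K$. For $(\Rightarrow)$ I would argue in two steps, the commutation relation \eqref{eq:commutation relation for crossed products} playing the role that $\L(G)=\rho_G(G)'$ plays in the group case. Step 1: using that $\rho_G^{\alpha}(G)\subset\lambda_{M\rtimes G}(M\rtimes_{\alpha}G)'$, together with the identity $\rho_G^{\alpha}(h)(\eta\otimes\xi_K)=\Delta_G(h)^{-1/2}(v_h\eta)\otimes\xi_{Kh^{-1}}$ and the fact that the vectors $\rho_G^{\alpha}(h)(\eta\otimes\xi_K)$ ($h\in G$, $\eta\in\Ldeux M$) span a dense subspace of $\operatorname{range}(\bar q_K)$, one checks that $x\bar q_K=0$ is equivalent to $\lambda_{M\rtimes G}(x)(\eta\otimes\xi_K)=0$ for all $\eta\in\Ldeux M$. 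Step 2: since $\{\eta\otimes\xi_{gK}\}$ spans $\Ldeux M\otimes\Ldeux(G/K)$ densely, the hypothesis $\fc{x}{gK}=0$ for all $g$ says exactly that $\lambda_{M\rtimes G}(x)(\eta\otimes\xi_K)$ is orthogonal to $\Ldeux M\otimes\Ldeux(G/K)$, so it suffices to show this vector \emph{lies in} $\Ldeux M\otimes\Ldeux(G/K)$.

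This last point is the main obstacle, and the genuine difference from the group case: $\lambda_{M\rtimes G}(M\rtimes_{\alpha}G)$ does \emph{not} preserve $\Ldeux M\otimes\Ldeux(G/K)$. I would resolve it by using on the target side a projection that does commute with $\lambda_{M\rtimes G}(M\rtimes_{\alpha}G)$, namely $\widetilde q_K\doteq\tfrac{1}{\mu_G(K)}\int_K\rho_G^{\alpha}(k)\,d\mu_G(k)\in\rho_G^{\alpha}(G)''$: then $x$ commutes with $\widetilde q_K$ and hence preserves $\operatorname{range}(\widetilde q_K)$, on which orthogonality to $\Ldeux M\otimes\Ldeux(G/K)$ forces vanishing, because for $\zeta\in\operatorname{range}(\widetilde q_K)$ the value $(1\otimes p'_K)\zeta(s)$ (with $p'_K$ the projection onto right-$K$-invariant functions) equals the $\{v_k:k\in K\}$-average of $\zeta(s)$; combined with $\widetilde q_K(\eta\otimes\xi_K)=(P_K^M\eta)\otimes\xi_K$, where $P_K^M$ projects onto the $v_k$-fixed vectors, and a refinement argument letting $K$ shrink (so that $P_L^M\to 1$), this gives $\lambda_{M\rtimes G}(x)(\eta\otimes\xi_K)=0$. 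An alternative, possibly cleaner, route uses the dual weight $\hat\psi$ of a faithful normal semifinite weight on $M$: then $\Ldeux M\otimes\Ldeux G$ is its GNS space and $\bar q_K$ lies in its definition ideal (as $\hat\psi$ restricts to the Plancherel weight on $\L(G)\subset M\rtimes_{\alpha}G$, one has $\hat\psi(\bar q_K)=\mu_G(K)^{-1}<\infty$); expanding the inner product defining $\fc{x}{gK}$ against the dense family $\Lambda_{\hat\psi}(\bar q_K b)$ identifies ``$\fc{x}{gK}=0$ for all $g$'' with ``$\bar q_K u_h^{*}x\bar q_K=0$ for all $h$'', and taking $h=e$ gives $\bar q_K x\bar q_K=0$, whence $x=\lim_K\bar q_K x\bar q_K=0$.

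Finally, for the ``as a consequence'' clause, suppose $\fc{x}{gK}=0$ for every $K\co G$ and every $g\notin K$. By \Cref{lem:monotonicity of phi/mu for crossed products}, whenever $K'\le K$ only the subcoset $K'$ of $K$ contributes, so $\fc{x}{K}=\alpha_K(\fc{x}{K'})$; the net $(\fc{x}{K})_{K\co G}$ is bounded by $\norm{x}_{M\rtimes G}$ by \Cref{lem:continuity properties of Fourier coefficients for groups}, hence has a weak-$*$ cluster point $a\in M$, and passing to the limit (using weak-$*$ continuity of $\alpha_K$) yields $\fc{x}{K}=\alpha_K(a)=\fc{a}{K}$ for all $K$, while $\fc{x}{gK}=0=\fc{a}{gK}$ for $g\notin K$. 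Thus $\fc{x}{}=\fc{a}{}$, and the first part forces $x=a\in M$.
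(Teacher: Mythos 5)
There is a genuine gap: the ``corner lemma'' on which the whole argument rests --- for a \emph{fixed} $K\co G$, $\fc{x}{gK}=0$ for all $g$ if and only if $x(1\otimes p_K)=0$ --- is false. Take $x=a\in M$ nonzero with $\alpha_K(a)=0$ (e.g.\ a mean--zero function for an ergodic probability--measure--preserving action of a compact group $G=K$). By item 2 of \Cref{lem:continuity properties of Fourier coefficients for groups} every Fourier coefficient of $a$ at level $K$ vanishes, yet $\lambda_M^{\alpha}(a)(\eta\otimes\xi_K)(s)=\alpha_{s^{-1}}(a)\eta\,\xi_K(s)$ is not identically zero, so $a(1\otimes p_K)\neq 0$. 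You correctly flag the underlying phenomenon ($\lambda_{M\rtimes G}(M\rtimes_{\alpha}G)$ does not preserve $\Ldeux M\otimes \Ldeux(G/K)$), but neither patch closes it, because both only ever invoke the hypothesis at the single level $K$. The $\widetilde q_K$ route ends with the information that the projection onto $\{v_k\}_{k\in K}$--fixed vectors kills $\zeta(s)$ a.e., which does not force $\zeta=0$; and the promised ``refinement letting $K$ shrink'' would require knowing that $\lambda_{M\rtimes G}(x)(\eta\otimes\xi_K)$ is orthogonal to $\Ldeux M\otimes\Ldeux(G/L)$ for $L\lneq K$, which involves pairings $\ps{\lambda_{M\rtimes G}(x)(\xi\otimes\xi_{kL})}{\eta\otimes\xi_{gL}}$ with $k\in K\setminus L$ that are not Fourier coefficients of $x$ at any single base point. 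The same uncontrolled pairings (now against right cosets $\xi_{Kh}$) block the claimed identification of ``$\fc{x}{gK}=0$ for all $g$'' with ``$(1\otimes p_K)u_h^{*}x(1\otimes p_K)=0$ for all $h$'' in the dual--weight route, since $\{\xi\otimes\xi_K\}$ is far from dense in the range of $1\otimes p_K$.

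The missing idea is the one the paper uses: apply the hypothesis at the \emph{conjugated} subgroups $hKh^{-1}$ to get $\lambda_{M\rtimes G}(x)(\Ldeux M\otimes\xi_{hKh^{-1}})\perp\Ldeux M\otimes\xi_{ghKh^{-1}}$ for all $g$, then transport by the unitary $\rho_G^{\alpha}(h)$, which commutes with $\lambda_{M\rtimes G}(x)$ and with $(\1\otimes\lambda_G)(g)$ and carries $\Ldeux M\otimes\xi_{hKh^{-1}}$ onto $\Ldeux M\otimes\xi_{hK}$. Letting $h$ vary yields $\lambda_{M\rtimes G}(x)\bigl(\Ldeux M\otimes\Ldeux(G/K)\bigr)\perp\Ldeux M\otimes\Ldeux(G/K)$ for every $K$, and the nesting $\Ldeux(G/K)\subset\Ldeux(G/K')$ for $K'\co K$ together with density of $\bigcup_{K}\Ldeux M\otimes\Ldeux(G/K)$ gives $x=0$ with no corner lemma at all. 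Your reduction of the ``if'' directions to trivialities and your treatment of the ``as a consequence'' clause are fine and essentially match the paper.
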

    \begin{proof}
    Let $x\in M\rtimes_{\alpha}G$ such that $\forall K\co G, \forall g\in G, \fc{x}{gK}=0$.  Given $\xi,\eta\in \Ldeux M$, $h\in G, K\co G$ we have that $\ps{\lambda_{M\rtimes G}(x)\xi\otimes \xi_{hKh^{-1}}}{(1\pt\lambda_G)(g)\eta\otimes \xi_{hKh^{-1}}}=0$ for every $g\in G$, and therefore $\lambda_{M\rtimes G}(x)(\Ldeux M \otimes \xi_{hKh^{-1}})\subset (\Ldeux M \otimes \xi_{g{hKh^{-1}}})^{\perp}$. Recall that, for $h\in G$, $\rho^{\alpha}_G(h)$ commutes with $\lambda_{M\rtimes G}(x)$ and with $(\1\pt\lambda_G)(g)$. Then $\lambda_{M\rtimes G}(x)(\Ldeux M\otimes \xi_{hK})\subset (\Ldeux M\otimes \xi_{g{h^{-1}}K})^{\perp}$ for every $g\in G$, as $\rho^{\alpha}_G(h)(\Ldeux M \otimes \xi_K)=\Ldeux M\otimes \xi_{Kh}$. Therefore for any $K\co G$, $\lambda_{M\rtimes G}(x)(\Ldeux M\otimes \Ldeux(G/K))\subset (\Ldeux M\otimes \Ldeux (G/K))^{\perp}$. 

    Let $K'\co K$: $\Ldeux M\otimes \Ldeux(G/K)\subset \Ldeux M\otimes \Ldeux(G/K')$ and thus $\lambda_{M\rtimes G}(x)(\Ldeux M\otimes \Ldeux(G/K))\subset (\Ldeux M\otimes \Ldeux(G/K'))^{\perp}$. By density of $\bigcup_{K'\co K} \Ldeux M\otimes \Ldeux(G/K')$ in $\Ldeux M \otimes \Ldeux G$, $\lambda_{M\rtimes G}(x)(\Ldeux M\otimes \Ldeux(G/K))=\{0\}$ for every $K\co G$ and by density of $\bigcup_{K\co G} \Ldeux M\otimes \Ldeux(G/K)$ in $\Ldeux M \otimes \Ldeux G$, we obtain $x=0$.

    Now, we prove the consequence. The net $(\fc{x}{K})_{K\co G}$ is bounded by $\norm{x}_{M\rtimes_{\alpha} G}$, and therefore we can choose a cofinal subnet $(K_i)_{i\in I}$ and $a\in M$ such that $\fc{x}{K_i}$ weakly converges to $a$. For every $K\co G$ and for every $g\notin K$, $\fc{x-a}{gK}=0$. By  \Cref{lem:monotonicity of phi/mu for crossed products}, for all $K\co G$ and $j\in I$ such that $K_j\co K$, $\fc{x-a}{K}=\alpha_{K}(\fc{x-a}{K_j})$. The net $\fc{x-a}{K_j}$ weakly converges to $0$ as $j$ goes to infinity and therefore for every $K\co G$, $\fc{x-a}{K}=0$. Therefore, by the first part of the proposition, $x=a\in M$.
    \end{proof}

    \subsection{Factoriality Criterion for Crossed Products}\label{subsection: crossed products factoriality}
    We finally establish the crossed product analog of \Cref{lem:hilbert inequality}, and we prove \Cref{mainthm:factoriality for crossed products}.
    \begin{Lem}\label{lem:hilbert inequality for crossed products}
        Let $\alpha:G\acts (M,\psi)$ be a continuous action of a totally disconnected locally compact group on a von Neumann algebra $M$ endowed with a faithful normal state $\psi$.
        Let $H_{\psi}=\{h\in G\mid \psi\circ \alpha_h=\psi\}$ be the subgroup of $\psi$-preserving elements. Then for any $H\cl H_{\psi}$ and $z\in \L(H)'\cap (M\rtimes_{\alpha}G)$, 
         \begin{flalign*}
            &&\norm{z}_{\infty}^2\geq \ccc{H}{gK}\norm{\fc{z}{gK}}_{\psi}^2 && \text{for every $K\co G$, $g\in G$.}
         \end{flalign*}
    \end{Lem}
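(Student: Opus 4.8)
The plan is to transpose the proof of \Cref{lem:hilbert inequality} to the crossed-product setting, carrying out all computations in the standard representation $\lambda_{M\rtimes G}$ of $M\rtimes_{\alpha}G$ on $\H\doteq\Ldeux M\otimes\Ldeux G$ and replacing the scalar $\fc{z}{gK}$ by its norm $\norm{\fc{z}{gK}}_{\psi}$. Write $\xi_{\psi}\in\Ldeux M$ for the vector of the positive cone representing $\psi$, so that $\psi(a)=\ps{\lambda_M(a)\xi_{\psi}}{\xi_{\psi}}$ and $\norm{\lambda_M(a)\xi_{\psi}}=\norm{a}_{\psi}$ for all $a\in M$, and let $e_{gK}$ be the orthogonal projection of $\H$ onto $\Ldeux M\otimes\cc\xi_{gK}$. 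Feeding $\xi=\xi_{\psi}$ into \Cref{def:fourier coefficients for crossed products} one reads off that $\lambda_M(\fc{z}{gK})\xi_{\psi}\otimes\xi_{gK}=e_{gK}\,\lambda_{M\rtimes G}(z)(\xi_{\psi}\otimes\xi_K)$, whence
\[\norm{\fc{z}{gK}}_{\psi}=\norm{e_{gK}\,\lambda_{M\rtimes G}(z)(\xi_{\psi}\otimes\xi_K)}.\]

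I would then record two elementary facts. Since $K$ is compact open, any $h$ normalising $K$ satisfies $\mu_G(K)=\mu_G(hKh^{-1})=\mu_G(Kh^{-1})=\Delta_G(h)^{-1}\mu_G(K)$, so $\nmlz{G}{K}\leq\ker\Delta_G$; in particular $\rho_G^{\alpha}(h)$ is unitary for every $h\in\nmlz{H}{K}$. And since $H\leq H_{\psi}$, for $h\in\nmlz{H}{K}\subset H$ the automorphism $\alpha_h$ preserves $\psi$, so its canonical unitary implementation $v_h$ fixes the positive-cone vector representing $\psi$, so that $v_h\xi_{\psi}=\xi_{\psi}$ (a standard property of the standard form, see \cite{Haagerup1975StandardForm}). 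Now fix $gK\in G/K$ and, for each $B$ in the conjugation orbit $\Ccc{H}{gK}$ of $gK$ under $\nmlz{H}{K}$, choose $h_B\in\nmlz{H}{K}$ with $B=h_B(gK)h_B^{-1}$ and set $W_B\doteq(\1\otimes\lambda_G)(h_B)\,\rho_G^{\alpha}(h_B)$. A direct computation --- in which the measure identity above is what keeps the indicator functions normalised --- gives $W_B(\eta\otimes\xi_{gK})=(v_{h_B}\eta)\otimes\xi_{B}$ for all $\eta\in\Ldeux M$; hence $W_B$ is a unitary carrying $\Ldeux M\otimes\cc\xi_{gK}$ onto $\Ldeux M\otimes\cc\xi_{B}$, so $W_B e_{gK}W_B^{*}=e_{B}$, and moreover $W_B(\xi_{\psi}\otimes\xi_K)=(v_{h_B}\xi_{\psi})\otimes\xi_K=\xi_{\psi}\otimes\xi_K$ since $h_B$ normalises $K$ and $v_{h_B}\xi_{\psi}=\xi_{\psi}$.

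The key point is that $\lambda_{M\rtimes G}(z)$ commutes with $W_B$: it commutes with $(\1\otimes\lambda_G)(h_B)=\lambda_{M\rtimes G}(u_{h_B})$ because $z\in\L(H)'$ and $h_B\in H$, and with $\rho_G^{\alpha}(h_B)$ because $\rho_G^{\alpha}(G)\subset\lambda_{M\rtimes G}(M\rtimes_{\alpha}G)'$ by \eqref{eq:commutation relation for crossed products} while $z\in M\rtimes_{\alpha}G$. Consequently
\begin{align*}
e_{B}\,\lambda_{M\rtimes G}(z)(\xi_{\psi}\otimes\xi_K)
&=e_{B}\,\lambda_{M\rtimes G}(z)\,W_B(\xi_{\psi}\otimes\xi_K)
=e_{B}\,W_B\,\lambda_{M\rtimes G}(z)(\xi_{\psi}\otimes\xi_K)\\
&=W_B\,e_{gK}\,\lambda_{M\rtimes G}(z)(\xi_{\psi}\otimes\xi_K),
\end{align*}
which, $W_B$ being unitary, has norm $\norm{e_{gK}\,\lambda_{M\rtimes G}(z)(\xi_{\psi}\otimes\xi_K)}=\norm{\fc{z}{gK}}_{\psi}$. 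Finally the family $(\xi_B)_{B\in\Ccc{H}{gK}}$ is orthonormal in $\Ldeux G$, so the projections $(e_B)_{B\in\Ccc{H}{gK}}$ are pairwise orthogonal; since $\norm{\xi_{\psi}\otimes\xi_K}=1$ and $\lambda_{M\rtimes G}$ is isometric, Bessel's inequality yields
\begin{align*}
\norm{z}_{\infty}^2\geq\norm{\lambda_{M\rtimes G}(z)(\xi_{\psi}\otimes\xi_K)}^2
&\geq\sum_{B\in\Ccc{H}{gK}}\norm{e_{B}\,\lambda_{M\rtimes G}(z)(\xi_{\psi}\otimes\xi_K)}^2\\
&=\ccc{H}{gK}\,\norm{\fc{z}{gK}}_{\psi}^{2}.
\end{align*}

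I expect the only genuine work to be bookkeeping, concentrated in two places: the explicit computation of $W_B$ on $\eta\otimes\xi_{gK}$, where one must reconcile the conventions for $\rho_G^{\alpha}$, for the normalisation of $\xi_{gK}$ and for left translation by $(\1\otimes\lambda_G)(h_B)$ --- the unimodularity of $\nmlz{G}{K}$ being precisely what makes the normalisations cancel --- and the standard but not entirely formal passage from ``$\alpha_h$ preserves $\psi$'' to ``$v_h\xi_{\psi}=\xi_{\psi}$'', which is exactly what the hypothesis $H\leq H_{\psi}$ is there to provide. The algebraic skeleton --- commutation with $W_B$ together with orthogonality along the conjugation orbit --- is the same as in \Cref{lem:hilbert inequality}.
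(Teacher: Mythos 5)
Your proof is correct and follows essentially the same route as the paper's: the same unitaries $(\1\otimes\lambda_G)(h_B)\rho_G^{\alpha}(h_B)$, the same commutation with $\lambda_{M\rtimes G}(z)$, the fact that they fix $\xi_{\psi}\otimes\xi_K$, and orthogonality along the conjugation orbit. Your packaging via the projections $e_B$ (rather than Bessel's inequality against the family of vectors $W_B(\eta_{\psi}\otimes\xi_{gK})$ as in the paper) is a clean equivalent formulation that conveniently avoids having to renormalize those test vectors.
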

    \begin{proof}
        Let $A=gK$, and denote by $\xi_{\psi}\in \Ldeux M$ the unit vector such that for all $x\in M$, $\psi(x)=\ps{x\xi_{\psi}}{\xi_{\psi}}$.
        Let $\eta_{\psi}\doteq \lambda_M(\fc{z}{gK})\xi_{\psi}\in \Ldeux M$. Then $\norm{\fc{z}{gK}}_{\psi}^2=\ps{\lambda_{M\rtimes G}(x)\xi_{\psi}\otimes \xi_K}{\eta_{\psi}\otimes \xi_A}$.
        Given any $B\in\Ccc{H}{gK}$, let $h_B\in \nmlz{H}{K}$ such that $B=h_BAh_B^{-1}$. Then $((\1\pt\lambda_G)(h_B)\rho_G^{\alpha}(h_B)\eta_{\psi}\otimes \xi_{A})_{B\in \Ccc{H}{gK}}$ is a family of orthogonal vectors of norm at most one. Let $z$ be as in the lemma:
   \begin{align*}
       \norm{z}_{\infty}^2&
       \geq \norm{\lambda_{M\rtimes G}(z)\xi_{\psi}\otimes\xi_{K}}_{\Ldeux M\pt \Ldeux G}^2\\
       &\geq \sum_{B\in \Ccc{H}{gK}}\abs{\ps{\lambda_{M\rtimes G}(z)\xi_{\psi}\otimes\xi_{K}}{(\1\pt\lambda_G)(h_B)\rho_G^{\alpha}(h_B)\eta_{\psi}\otimes \xi_{A}}}^2\\
       &= \sum_{B\in \Ccc{H}{gK}}\abs{\ps{\lambda_{M\rtimes G}(z)\rho_G^{\alpha}(h_{B}^{-1})(\1\pt \lambda_G)(h_{B}^{-1})\xi_{\psi}\otimes\xi_{K}}{\eta_{\psi}\otimes \xi_{A}}}^2 \\
       &= \sum_{B\in \Ccc{H}{gK}}\abs{\ps{\lambda_{M\rtimes G}(z)(\xi_{\psi}\otimes\xi_{K})}{\eta_{\psi}\otimes \xi_{A}}}^2=\ccc{H}{gK}\abs{\ps{\lambda_M(\fc{z}{gK})\xi_{\psi}}{\eta_{\psi}}}^2\\
       &= \ccc{H}{gK}\norm{\fc{z}{gK}}_{\psi}^2
   \end{align*}
   \end{proof}

   \begin{The}[\Cref{mainthm:factoriality for crossed products}]\label{the:factoriality for crossed products}
    Let $G$ be a locally compact group with property \eqref{eq:etoileH} relatively to a closed subgroup $H$ and $\alpha:G\acts (M,\psi)$ be a continuous action
    of $G$ on a von Neumann algebra $M$ endowed with a faithful normal state $\psi$. Let $G_{\psi}=\{g\in G\mid \psi\circ \alpha_g=\psi\}$ be the subgroup of $\psi$-preserving elements. If $G_{\psi}$ is open and $H\leq G_{\psi}$, then
    \[\L(H)'\cap (M\rtimes_{\alpha} G)=M^H,\]
    where $M^H$ is the algebra of $H$-invariant elements in $M$.
    In particular, if $H\acts \Z(M)$ is ergodic, then $M\rtimes_{\alpha}G$ is a factor. 
    \end{The}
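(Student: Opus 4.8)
The plan is to transcribe the proof of \Cref{the:irreducible inclusion for groups with etoile_H} into the crossed-product setting, using the $M$-valued Fourier coefficients of \Cref{def:fourier coefficients for crossed products} and measuring their size with the seminorm $\norm{\cdot}_{\psi}$. Since $\psi$ is faithful, $\norm{\cdot}_{\psi}$ is actually a norm on $M$, so for a fixed $z\in \L(H)'\cap(M\rtimes_{\alpha}G)$ it will be enough to show that $\fc{z}{gK}=0$ for every $K\co G$ and every $g\notin K$: the consequence of \Cref{lem:condition for nullity of an element in a crossed product} then gives $z\in M$, and it will remain only to identify $M\cap\L(H)'$.

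The key new input is a descent inequality in the spirit of the one opening the proof of \Cref{the:irreducible inclusion for groups with etoile_H}, and this is where the hypothesis that $G_{\psi}$ be open is used. Since $G_{\psi}$ is open it contains all sufficiently small members of a neighbourhood basis at the identity, so — passing to a tail, and arguing as in \Cref{the:irreducible inclusion for groups with etoile_H} with a decreasing basis — I may assume that the basis $(K_n)_{n\in\nn}$ witnessing \etoile{H} satisfies $K_n\leq G_{\psi}$ for all $n$. For a compact open $K\leq G_{\psi}$, the smoothing map $\alpha_K$ is unital and completely positive, and it preserves $\psi$ because $\widetilde{\alpha_K}(\psi)=\psi$ ($\psi$ being $K$-invariant); by the Kadison--Schwarz inequality it is therefore $\psi$-contractive, $\norm{\alpha_K(a)}_{\psi}\leq\norm{a}_{\psi}$. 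Combining this with the triangle inequality and \Cref{lem:monotonicity of phi/mu for crossed products}, for $K'\co K\co G$ contained in $G_{\psi}$ and a partition $gK=\bigsqcup_{i=1}^{n}g_iK'$ (so $\mu_G(K)=n\,\mu_G(K')$) one gets an index $i$ with $\frac{\norm{\fc{z}{g_iK'}}_{\psi}}{\mu_G(K')}\geq\frac{\norm{\fc{z}{gK}}_{\psi}}{\mu_G(K)}$.

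With this inequality in hand I would argue by contradiction exactly as in the group case. If $\fc{z}{gK}\neq 0$ for some $g\notin K$, set $c_0\doteq\norm{\fc{z}{gK}}_{\psi}/\mu_G(K)>0$ and iterate the descent along $(K_n)$ to obtain a nested sequence $gK\supseteq g_{n_0}K_{n_0}\supseteq g_{n_0+1}K_{n_0+1}\supseteq\cdots$ with $\norm{\fc{z}{g_nK_n}}_{\psi}\geq c_0\,\mu_G(K_n)$ for all $n\geq n_0$. As $\bigcap_nK_n=\{e\}$ the points $g_n$ converge to some $h\in gK$, which is nontrivial because $e\notin gK$, and $hK_n=g_nK_n$ for $n\geq n_0$; \Cref{lem:hilbert inequality for crossed products}, applicable since $H\leq G_{\psi}$, then gives
\[\norm{z}_{\infty}^2\;\geq\;\ccc{H}{hK_n}\,\norm{\fc{z}{hK_n}}_{\psi}^2\;\geq\;\ccc{H}{hK_n}\,\mu_G(K_n)^2\,c_0^2\]
for all $n\geq n_0$, so $\ccc{H}{hK_n}\mu_G(K_n)^2$ stays bounded in $n$, contradicting \etoile{H} for the nontrivial element $h$. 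Hence $\fc{z}{gK}=0$ whenever $g\notin K$, so $z\in M$. Finally, if $z\in M\cap\L(H)'$ then for each $h\in H$ the unitary $u_h\in\L(H)\subset M\rtimes_{\alpha}G$ implements $\alpha_h$, so $\alpha_h(z)=u_hzu_h^{*}=z$ and $z\in M^H$; conversely $M^H$ commutes with every $u_h$, hence with $\L(H)=\overline{\vect\{u_h\mid h\in H\}}^{\wot}$, so $M^H\subset\L(H)'\cap(M\rtimes_{\alpha}G)$, giving equality. For the last clause, $\Z(M\rtimes_{\alpha}G)\subset\L(H)'\cap(M\rtimes_{\alpha}G)=M^H$ and any central element also commutes with $M$, hence lies in $\Z(M)\cap M^H=\Z(M)^H$; if $H\acts\Z(M)$ is ergodic this reduces to $\cc$, so $M\rtimes_{\alpha}G$ is a factor.

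I expect the main obstacle to be the bookkeeping forced by the operator-valued nature of the Fourier coefficients: the whole descent must be run with the seminorm $\norm{\cdot}_{\psi}$ instead of with absolute values, and for the descent step to close one needs each $K_n$ of the chosen basis to sit inside $G_{\psi}$, so that the averaging map $\alpha_{K_n}$ is $\psi$-contractive — this is precisely the point where "$G_{\psi}$ open" and "$H\leq G_{\psi}$" are used. Apart from this, every step is a faithful transcription of the argument proving \Cref{the:irreducible inclusion for groups with etoile_H}, together with the elementary observation that conjugation by $u_h$ implements $\alpha_h$ on $M$.
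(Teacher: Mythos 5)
Your proposal is correct and follows essentially the same route as the paper's proof: the $\psi$-contractive descent inequality for the $M$-valued Fourier coefficients (via the monotonicity lemma and Kadison--Schwarz for the unital completely positive, $\psi$-preserving map $\alpha_K$ when $K\leq G_{\psi}$), the contradiction with $\etoile{H}$ via \Cref{lem:hilbert inequality for crossed products} applied to the limit point $h$, and the conclusion $z\in M$ from \Cref{lem:condition for nullity of an element in a crossed product}. Your explicit identification of $M\cap\L(H)'$ with $M^H$ and the derivation of factoriality from ergodicity on $\Z(M)$ are the (correct) elementary completions that the paper leaves implicit.
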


    \begin{proof}
        We first establish the following inequality: given $x\in M\rtimes_{\alpha}G$, $K\co G$ and $g\in G$, if $\psi\in M_*$ is invariant under $\alpha_{ K}$, then there exists $g'\in gK$ such that 
        \[\frac{\norm{\fc{x}{g'K'}}_{\psi}}{\mu_G(K')}\geq \frac{\norm{\fc{x}{gK}}_{\psi}}{\mu_G(K)}.\]
        Indeed, using \Cref{lem:monotonicity of phi/mu for crossed products} and the $\alpha_{ K}$-invariance of $\psi$, we obtain
            \begin{align*}
                \frac{\norm{\fc{x}{gK}}_{\psi}}{\mu_G(K)}
                =\norm{ \sum_{i=1}^N \frac{\alpha_K(\fc{x}{g_iK'})}{\mu_G(K)}}_{\psi}
                =\frac{1}{N}\norm{ \sum_{i=1}^N \frac{\alpha_K(\fc{x}{g_iK'})}{\mu_G(K')}}_{\psi}
                \leq \frac{1}{N}\sum_{i=1}^N \frac{\norm{\alpha_K(\fc{x}{g_iK'})}_{\psi}}{\mu_G(K')}\\
                \leq \frac{1}{N}\sum_{i=1}^N \frac{\norm{\fc{x}{g_iK'}}_{\psi}}{\mu_G(K')}
                \leq \max \frac{\norm{\fc{x}{g_iK'}}_{\psi}}{\mu_G(K')}.    
            \end{align*}

        Let $z\in \L(H)'\cap (M\rtimes_{\alpha} G)$. Assume by contradiction that there exists $K\co G$ and $g\notin K$ such that $\fc{z}{gK}\neq 0$. 
        By \Cref{lem:monotonicity of phi/mu for crossed products} and using the fact that $H_{\psi}$ is open, we may assume that $K\co H_{\psi}$. Consider $(K_n)_{n\in \nn}$ a basis at the identity in compact open subgroups with respect to which $G$ has $\eqref{eq:etoileH}$. Using repeatedly \cref{lem:monotonicity of phi/mu for crossed products} we obtain an index $n_0\in \nn$ and a sequence $(g_n)_{n\geq n_0}$ such that $g_{n_0}K_{n_0}\subset gK$ and for all $n\geq n_0$, 
        we have the inclusion $g_{n+1}K_{n+1}\subset g_{n}K_n$ and the inequality $\frac{\norm{\fc{z}{g_{n+1}K_{n+1}}}_{\psi}}{\mu_G(K_{n+1})}\geq \frac{\norm{\fc{z}{gK}}_{\psi}}{\mu_G(K)}$. The sequence $(g_n)_{n\geq n_0}$ converges to a non-trivial element that we denote by $h$. \Cref{lem:hilbert inequality for crossed products} gives 
        \[ \norm{z}_{\infty}^2
        \geq \ccc{H}{hK_n}\norm{\fc{z}{hK_n}}_{\psi}^2
        \geq \ccc{H}{hK_n}\mu_G(K_n)^2\frac{\norm{\fc{z}{gK}}_{\psi}^2}{\mu_G(K)^2}.
        \]
        Therefore, $\limsup_{n\in \nn}\ccc{H}{hK_n}\mu_G(K_n)^2=\infty$, and thus $\fc{z}{gK}=0$ which is a contradiction. By \Cref{lem:condition for nullity of an element in a crossed product}, $z\in M$.
    \end{proof}
    As an application of the previous theorem, we obtain factoriality for the group-measure space construction associated to the (non-free) action of $\Ndk$ on $\bTdk$ endowed with its natural visual measure. 
    \begin{Prop}\label{Prop:factoriality of the action of Neretin on its boundary}
    Let $\Ndk$ be a Neretin group, and $\Odk$ be its open subgroup of local isometries. Let $\nu$ be the visual probability measure on $\bTdk$ and $\alpha:\Ndk\acts \Linf(\bTdk,\nu)$ the action induced by the non-singular action of $\Ndk$ on $(\bTdk,\nu)$. Then 
    \[ \L(\Odk)'\cap (\Linf(\bTdk,\nu) \rtimes_{\alpha} \Ndk)=\cc.\] 
    Moreover, $\Linf(\bTdk,\nu) \rtimes_{\alpha} \Ndk$ is a type $\III_{\frac{1}{d}}$ factor.
\end{Prop}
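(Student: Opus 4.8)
The plan is to read off the first equality from \Cref{the:factoriality for crossed products} and then to determine the type exactly as in \Cref{cor:type of L(G)}, with the Plancherel weight replaced by the dual weight. First I would apply \Cref{the:factoriality for crossed products} with $G=\Ndk$, $M=\Linf(\bTdk,\nu)$, $\psi(f)=\int_{\bTdk}f\,d\nu$ and $H=\Odk$. The hypotheses are quick to check: $\nu$ has full support (it is positive on every ball), so $\psi$ is a faithful normal state; the non-singular action $\Ndk\acts(\bTdk,\nu)$ is quasi-invariant with continuous Radon--Nikodym cocycle, hence induces a continuous action $\alpha$ on $\Linf(\bTdk,\nu)$; $\Ndk$ has $\etoile{\Odk}$ by \Cref{thm:Neretin}; and a local isometry carries each ball onto a ball of the same height, hence preserves $\nu$, so $\Odk\leq G_\psi$ and, $\Odk$ being open, $G_\psi$ is open. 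Thus \Cref{the:factoriality for crossed products} gives $\L(\Odk)'\cap N=M^{\Odk}$, where $N\doteq\Linf(\bTdk,\nu)\rtimes_\alpha\Ndk$ and $M^{\Odk}$ is the algebra of $\Odk$-invariant functions.

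The next step is to show that $\Odk\acts(\bTdk,\nu)$ is ergodic, so that $M^{\Odk}=\cc$. This already holds for the compact subgroup $\Kdk$ --- the rooted automorphisms $\Fix_{\Aut(\Tdk)}(v_0)$ --- which acts transitively on $\V^{(n)}(\Tdk)$, hence on the atoms of the partition $P^{(n)}$, for every $n$; consequently a $\Kdk$-invariant $f\in\Linf(\bTdk,\nu)$ satisfies $\mathbb{E}[f\mid\sigma(P^{(n)})]=\int f\,d\nu$ for all $n$, and since the $\sigma(P^{(n)})$ increase to the Borel $\sigma$-algebra the martingale convergence theorem forces $f$ to be constant. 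Hence $\L(\Odk)'\cap N=\cc$; in particular the center of $N$, commuting with the copy of $\L(\Odk)$ inside it, is trivial, so $N$ is a factor (this also follows directly from the last assertion of \Cref{the:factoriality for crossed products}).

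For the type I would imitate \Cref{cor:type of L(G)}. Let $\hat\psi$ be the dual weight of $\psi$ on $N$: it is a normal semifinite faithful weight whose modular automorphism group is trivial on $M$ and multiplies each implementing unitary $u_g$ by $m_g^{it}$, where $m_g\in M_+$ is the Radon--Nikodym cocycle of the action --- a locally constant function with values in $\{d^j\mid j\in\zz\}$, equal to $1$ exactly when $g\in G_\psi$. Since $\Odk\leq G_\psi$, the subalgebra $\L(\Odk)$ lies in the centralizer $N_{\hat\psi}$, so $N_{\hat\psi}'\cap N\subseteq\L(\Odk)'\cap N=\cc$, whence $\Z(N_{\hat\psi})\subseteq N_{\hat\psi}'\cap N=\cc$: thus $N_{\hat\psi}$ is a factor. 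This is the decisive move, and it is where the non-freeness of the $\Ndk$-action on $\bTdk$ is rendered harmless: one gets factoriality of $N_{\hat\psi}$ without having to identify it, which matters because --- unlike the centralizer of the Plancherel weight in \Cref{cor:type of L(G)} --- $N_{\hat\psi}$ is strictly larger than $M\rtimes_\alpha G_\psi$ here. Now \cite[Corollaire 3.2.7]{Connes1973Classification} gives $\Sinv(N)=\Sp(\Delta_{\hat\psi})$, and unwinding the dual-weight construction (see \cite[Chapter X.1]{Takesaki2003TheoryOperator}, and compare \cite[Proposition 2.26]{Raum2019Csimplicity}) identifies $\Sp(\Delta_{\hat\psi})$ with the closure of the union of the essential ranges of the $m_g$; since a local similarity sending a ball of height $n$ onto a ball of height $m$ has Radon--Nikodym derivative $d^{m-n}$ there and every power of $d$ arises in this way, $\Sp(\Delta_{\hat\psi})=\{0\}\cup\{d^j\mid j\in\zz\}$. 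Therefore $\Sinv(N)=\{0\}\cup(1/d)^{\zz}$ and $N$ is a factor of type $\III_{1/d}$. Of the three steps only the type computation is substantive; within it the work is concentrated in the factoriality of $N_{\hat\psi}$ above, the rest being a routine unwinding of the dual-weight formalism together with the $d$-adic combinatorics of the visual measure.
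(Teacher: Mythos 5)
Your argument is correct, and the first two thirds of it coincide with the paper's proof: the identity $\L(\Odk)'\cap N=M^{\Odk}$ comes from \Cref{the:factoriality for crossed products} applied with $H=\Odk$ (the paper likewise checks that $\Odk$ preserves $\nu$ and is ergodic; your martingale argument via the level-transitive compact group $\Kdk$ is a fine way to supply the ergodicity that the paper only asserts), and both proofs then observe that $\L(\Odk)$ sits inside the centralizer $N_{\tilde\phi}$ of the dual weight, so that $\Z(N_{\tilde\phi})\subseteq \L(\Odk)'\cap N=\cc$ and the centralizer is a factor. Where you genuinely diverge is the last step. The paper does \emph{not} compute the $\Sinv$-invariant: it computes the $\Tinv$-invariant from the formula $\sigma_t^{\tilde\phi}(u_s)=u_s\bigl(\tfrac{d\nu\circ\alpha_s}{d\nu}\bigr)^{it}$, concludes via \cite[Th\'eor\`eme 3.4.1]{Connes1973Classification} that $N$ is of type $\III_0$ or $\III_{1/d}$, and then rules out $\III_0$ by \cite[Corollary 2.10]{ConnesTakesaki1977FlowWeights} using the factoriality of $N_{\tilde\phi}$. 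You instead feed that same factoriality into \cite[Corollaire 3.2.7]{Connes1973Classification} to get $\Sinv(N)=\Sp(\Delta_{\hat\psi})$ and then identify this spectrum with $\{0\}\cup d^{\zz}$, exactly parallel to \Cref{cor:type of L(G)}. This is a legitimate and arguably more direct route (it avoids the $\III_0$-versus-$\III_\lambda$ dichotomy altogether), but it shifts the burden onto the assertion that $\Sp(\Delta_{\hat\psi})$ equals the closure of the union of the essential ranges of the Radon--Nikodym cocycles $m_g$; this is true here --- on $\Ldeux(\bTdk\times\Ndk)$ the modular operator of the dual weight is multiplication by $(\xi,g)\mapsto m_g(\xi)$ (there is no modular-function correction since $\Ndk$ is unimodular), whose essential range is $\{0\}\cup d^{\zz}$ because the cocycle is locally constant and each power of $d$ is attained on a ball --- but it is the one step you leave as "unwinding the dual-weight formalism" and would need to be written out, since the crossed-product analogue of \cite[Proposition 2.26]{Raum2019Csimplicity} is not quoted anywhere in the paper. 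The paper's route trades this computation for the softer $\Tinv$-invariant argument plus the Connes--Takesaki exclusion of $\III_0$.
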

\begin{proof}
    Put $N=\Linf(\bTdk,\nu) \rtimes_{\alpha} \Ndk$. 
    The irreducible inclusion $\L(\Odk)'\cap N=\cc$ follows directly from  \Cref{the:factoriality for crossed products} and \Cref{thm:Neretin} together with the fact that $\Odk\acts (\bTdk,\nu)$ is an ergodic probability measure preserving action. Let $\phi$ be the canonical weight on $\Linf(\bTdk,\nu)$ given by integration against $\nu$, and $\widetilde{\phi}$ be its dual weight on $N$. Then $\sigma_t^{\widetilde{\phi}}(u_s)=u_s\left(\frac{d\nu\circ \alpha_s}{d\nu}\right)^{it}$ for $s\in \N_{d,k}$ and $\sigma_t^{\widetilde{\phi}}(f)=f$ for $f\in \Linf(\bTdk,\nu)$. Note that $\L(\Odk)$ is contained in the centralizer $N_{\widetilde{\phi}}$ of $\widetilde{\phi}$. In particular, this implies that $N_{\widetilde{\phi}}$ itself is a factor. As observed on indicator functions of clopen subsets of $\bTdk$, $\left(\frac{d\nu\circ \alpha_s}{d\nu}\right)\in \Lun(\bTdk,\nu)$ takes its values in $\left(\frac{1}{d}\right)^{\zz}$, and therefore $\Tinv(N)=\left(\frac{2\pi}{\log(1/d)}\right)\zz$ by \cite[Theorème 1.3.2]{Connes1973Classification}, where $\Tinv(N)$ denotes the $\Tinv$-invariant of the von Neumann algebra $N$, defined in \cite[Definition 1.3.1]{Connes1973Classification}. Therefore, this factor must be of type $\III_0$ or $\III_{1/d}$ by \cite[Théorème 3.4.1]{Connes1973Classification}. However, \cite[Corollary 2.10]{ConnesTakesaki1977FlowWeights} precludes a type $\III$ factor with a faithful normal semifinite weight whose centralizer is factorial from being of type $\III_0$; therefore $N$ is a type $\III_{1/d}$ factor.
\end{proof}
    \section*{Acknowledgments} We warmly thank our supervisor Amine Marrakchi for his constant support during this project. We are grateful to Sasha Bontemps for stimulating discussions on HNN extensions, and to Adrien Le Boudec and Alejandra Garrido for their insightful comments on this work.
\printbibliography
\end{document}